\documentclass{article}
\usepackage{hyperref}
\usepackage{xcolor}
\usepackage{amsmath}
\usepackage{amsthm}
\usepackage{amssymb}
\usepackage{enumitem}
\usepackage{indentfirst}
%\usepackage{refcheck}

%%%%%%%%%% EXACT 1in MARGINS + DOUBEL SPACED %%%%%%%

\setlength{\textwidth}{6.5in}     				  %%
\setlength{\oddsidemargin}{0in}   				  %% 

%%%%%%%%%%%%%%%%%%%%%%%%%%%%%%%%%%%%%%%%%%%%%%%%%%%%
\pretolerance=10000

\newtheorem{theorem}{Theorem}[section]
\newtheorem*{theorem*}{Theorem}

\newtheorem{corollary}[theorem]{Corollary}

\newtheorem{lemma}[theorem]{Lemma}
\newtheorem{proposition}[theorem]{Proposition}

\newtheorem{remark}{Remark}[section]

\newcommand{\R}{\mathbb{R}}

\begin{document}
	
	\title{Critical \(p\)-biharmonic problems and applications to Hamiltonian systems}

	\author{\textbf{Kanishka Perera}\\
		\small Department of Mathematics\\\small Florida Institute of Technology\\\small
		150 W University Blvd, Melbourne, FL 32901-6975, USA\\\small
		\textit{kperera@fit.edu}\medskip\\\textbf{Bruno Ribeiro}\\
		\small Departamento de Matem\'atica\\
		\small Universidade Federal da Para\'iba\\
		\small Cidade Universit\'aria, 58051-900, João Pessoa, Brazil\\
		\small \textit{bhcr@academico.ufpb.br}
	}
	
	\maketitle

    \begin{abstract}
      We study fourth–order quasilinear elliptic problems that involve the $p$-biharmonic operator and Navier boundary conditions.  The nonlinear term grows at the critical Sobolev rate.  Starting from a Hamiltonian system of two second–order equations, we use an inversion step to reduce it to a single $p$-biharmonic equation with a lower–order perturbation.  We handle both non-resonant and resonant cases and show that the problem admits non-trivial solutions when the forcing term and the superscaled perturbation are small enough.  The proof combines concentration–compactness with an abstract critical-point method based on the cohomological index.  Our theorems cover both homogeneous and nonhomogeneous settings and extend Tarantello’s classical results for the Laplacian, improving earlier work on $p$-biharmonic equations (including the case $p=2$) and on critical Hamiltonian systems.
\medskip

2020 Mathematics Subject Classification: 
Primary 35J60, 35J30; Secondary 35B33, 58E05.
    \end{abstract}

	\section{Introduction}
	
	This work aims to prove multiplicity of solutions for  Hamiltonian systems of elliptic equations in bounded domains, where one of the nonlinearities is a pure power, allowing the system to be reduced to a fourth-order \(p\)-biharmonic equation with Navier boundary conditions. More precisely, consider the system
	
	\begin{equation}\label{main}
		\left\{\begin{array}{ll}
			-\Delta u = |v|^{q-2}v & \mbox{in} \ \Omega, \\
			-\Delta v = f(u)+h(x) & \mbox{in} \ \Omega, \\
			u = v = 0 & \mbox{on} \ \partial\Omega,
		\end{array}\right.
	\end{equation}
	where \( \Omega \subset \mathbb{R}^N \) is a bounded and smooth domain, and we impose various hypotheses on \( f(s) \). We study both homogeneous and nonhomogeneous cases; thus we always assume \(f(0)=0\) and take \(h(x)\equiv0\) in the homogeneous setting. Since the nonlinearity on \( v \) can be inverted, it is well known that this system becomes equivalent to the fourth-order \( p \)-biharmonic equation given by
	
	\begin{equation}\label{mainbi}
		\left\{\begin{array}{ll}
			\Delta^2_p(u) = f(u)+h(x) & \mbox{in} \ \Omega, \\
			u = \Delta u = 0 & \mbox{on} \ \partial\Omega,
		\end{array}\right.
	\end{equation}
	where \( \Delta^2_p(u)=\Delta(|\Delta u|^{p-2}\Delta u) \) and \( p=q/(q-1) \). 
	
	\medskip
	
	Before stating the main hypotheses on \( f(s) \) and \( h(s) \), let us provide some background on the subject. There are already many results in the literature that cover this topic.
	
	\subsection{Homogeneous problem}

	Let us first focus on the case \( h(x)\equiv 0 \), that is, on the homogeneous situation. 
    There is a significant difference in the restrictions we need to impose on the dimension $N$ and the values of the exponents on the nonlinearities whether we are working with symmetric or asymmetric conditions for the Hamiltonian systems.

In 1998, Hulshof \emph{et al.}~\cite{Hulshoff-Mitidieri-Vander} studied the Hamiltonian system
\begin{equation}\label{mainEderson}
		\left\{\begin{array}{ll}
			-\Delta u = \mu|v|^{s-2}v + |v|^{q-2}v & \mbox{in} \ \Omega, \\
			-\Delta v = \lambda |u|^{r-2}u + |u|^{\tilde q-2}u & \mbox{in} \ \Omega, \\
			u = v = 0 & \mbox{on} \ \partial\Omega,
		\end{array}\right.
	\end{equation}  
    
with $N\ge 4$, $r=s=2$. 

Here, the pair \( (q,\tilde q) \) belongs to the so-called critical hyperbola, that is,
	\begin{equation}\label{criticalhyp}
		\frac{1}{q} + \frac{1}{\tilde q} = 1 - \frac{2}{N}.
	\end{equation}
This hyperbola characterizes the notion of \emph{critical growth} for Hamiltonian systems, which was independently introduced by Mitidieri~\cite{Mitidieri} and van~der~Vorst~\cite{VdVorst}, and investigated by several authors including Clément–de~Figueiredo–Mitidieri~\cite{ClementFigueiredoMitidieri} and Peletier–van~der~Vorst~\cite{PeletierVdVorst}.

In \cite{Hulshoff-Mitidieri-Vander} the authors proved the existence of positive solutions by re-parameterising the exponents \(q\) and \(\tilde q\) and adding extra conditions on \(N\).
Their results were later sharpened by dos~Santos and Guimarães~\cite{dosSantos-Guimaraes}, who assumed
\(1<r<\tilde q,\;1<s<q\) and
\((r-1)(s-1)\ge 1\).
The parameters \((\lambda,\mu)\) become decisive when \((r-1)(s-1)=1\), the ``linear'' threshold where the system couples with the spectrum of the linear operator, which generalizes the previous case $r=s=2$.
For \((r-1)(s-1)>1\) the problem is instead a superlinear, subcritical perturbation of the critical regime. They proved existence of positive solutions for small values of $\mu$ and $\lambda$ with some additional restrictions on $q$ provided $N=3$. 
	
   It is worth noting that, because the nonlinearities in~\eqref{mainEderson} are symmetric, the \emph{only} dimension that is critical to be addressed is \( N = 3 \).  This phenomenon is already visible in the classical scalar result of Brezis--Nirenberg~\cite{Brezis-Nirenberg}.

The picture changes markedly when the nonlinearities are \emph{asymmetric}.  
A key contribution in this direction is due to dos~Santos and Melo~\cite{dosSantos-Melo}, who considered the Hamiltonian system
\begin{equation}\label{mainagain}
\left\{
\begin{aligned}
-\Delta u &= |v|^{q-2}v &&\text{in } \Omega,\\
-\Delta v &= \lambda |u|^{r-2}u + |u|^{\tilde q-2}u &&\text{in } \Omega,\\
u = v &= 0 &&\text{on } \partial\Omega.
\end{aligned}
\right.
\end{equation}

By inverting the first equation they reduced the system to a single \( p \)-biharmonic problem

\begin{equation}\label{mainlufu}
		\left\{\begin{array}{ll}
			\Delta^2_p(u) = \lambda |u|^{r-2}u + |u|^{p_2^*-2}u & \mbox{in} \ \Omega, \\
			u = \Delta u = 0 & \mbox{on} \ \partial\Omega,
		\end{array}\right.
	\end{equation}  
where \( p:=q/(q-1) \) and \( p_{2}^{*}:=\tilde q = Np/(N-2p) \) (since \( (q,\tilde q) \) lies on the critical hyperbola~\eqref{criticalhyp}).  
They established the existence of a positive solution under several parameter regimes.  
The case most relevant here is \( r = p = q/(q-1) \), for which they required
\[
N \ge 6,\qquad \frac{N+\sqrt{2N}}{N-2} \le q < \frac{N}{2},\qquad \lambda < \lambda_{1},
\]
where \( \lambda_{1} \) is the first eigenvalue of \( \Delta^{2}_{p} \) on \( W^{2,p}(\Omega)\cap W^{1,p}_{0}(\Omega) \).  
Lower dimensions were also treated, provided \( r>p \), thereby excluding the \( p \)-linear case.

Our first two theorems extend their result by allowing \emph{any} \( \lambda>0 \), even at eigenvalues, and by adding both a \textit{scaled} and a \textit{superscaled} perturbation \( \lambda |u|^{p-2}u + \mu |u|^{r-2}u \), with \( p = q/(q-1) \) and \( p < r < \tilde q \).  Using the abstract theory developed in \cite{Perera4}, we prove the existence of a nontrivial solution for the Hamiltonian system given by
	
	\begin{equation}\label{mainhomfinal}
		\left\{\begin{array}{ll}
			-\Delta u = |v|^{q-2}v & \mbox{in} \ \Omega, \\
			-\Delta v = \lambda |u|^{p-2}u + \mu |u|^{r-2}u + |u|^{\tilde q-2}u  & \mbox{in} \ \Omega, \\
			u = v = 0 & \mbox{on} \ \partial\Omega,
		\end{array}\right.
	\end{equation}
	where \( (q,\tilde q) \) satisfies \eqref{criticalhyp}, \( p = q/(q-1) \) (which gives \( \tilde q = p_2^* \) and \( 1 < p < N/2 \)), and \( p < r < p_2^* \). The solution for this system is given as a solution for its equivalent fourth-order nonlinear equation given by
	
	\begin{equation}\label{mainhombifinal}
		\left\{\begin{array}{ll}
			\Delta^2_p(u) = \lambda |u|^{p-2}u + \mu |u|^{r-2}u + |u|^{p_2^*-2}u & \mbox{in} \ \Omega, \\
			u = \Delta u = 0 & \mbox{on} \ \partial\Omega.
		\end{array}\right.
	\end{equation}

These main results utilize an abstract critical point theorem based on a pseudo-index argument using the cohomological index theory (see \cite{Perera5}). Theferefore, the precise statements (Theorems~\ref{newhom1} and~\ref{newhom2}) are deferred until after the abstract framework is introduced in Section~\ref{subsec:perturbation}.

\medskip
    
   More recently and concerning multiplicity of solutions,  Lu and Fu \cite{Lu-Fu} studied the same \( p \)-biharmonic problem with critical growth and Navier boundary conditions given in \eqref{mainlufu} with $r=p$. They proved, among other results, a multiplicity theorem that guarantees the existence of multiple nontrivial solutions depending on the spectral location of the parameter \( \lambda \). Later, Manouni and Perera \cite{Manouni-Perera} considered the problem 
	\begin{equation}\label{mainbiK}
		\left\{\begin{array}{ll}
			\Delta^2_p(u) = \lambda|u|^{r-2}u + |u|^{p_2^*-2}u & \mbox{in} \ \Omega, \\
			u = |\nabla u| = 0 & \mbox{on} \ \partial\Omega,
		\end{array}\right.
	\end{equation}
	where here \( p < r < p_2^* \). The authors also proved multiplicity of solutions using a more recent abstract critical point theory given in \cite{Perera1}. Notice that this problem is actually slightly different because it concerns Dirichlet boundary conditions. Nevertheless, this is not an issue since all the techniques and results there also apply to the Navier boundary conditions (this is only a question of changing the base space from \( W_0^{2,p}(\Omega) \) to \( W^{2,p}(\Omega) \cap W_0^{1,p}(\Omega) \)).

	Notice that if we perform the inversion procedure and fix \( p = q/(q-1) \), then the problem \eqref{mainagain} becomes exactly the problems addressed in \cite{Lu-Fu}  (for \( r = p = q/(q-1) \)) or \cite{Manouni-Perera} (with Navier boundary conditions instead of Dirichlet), since \eqref{criticalhyp} gives \( \tilde q = p_2^* \). Therefore, we can state equivalent results for multiplicity of solutions to problem \eqref{mainagain} (see Theorems \ref{theotrivial1} and \ref{theotrivial2}). The main advantage of these results is that they avoid using Aubin–Talenti concentration profiles to keep the \textit{minimax} level below the compactness threshold.  Because of this, the theorems work even in low dimensions and are written without any restriction on $N$.  On the other hand, $\lambda$ has to lie sufficiently near the eigenvalues.

	\subsection{Nonhomogeneous problem}

	Now, let us give some motivations and background for the case where we suppose \(h(x)\neq 0\).  
	
	\medskip
	
	The first result worth mentioning goes back to 2004: Cao and Han \cite{Cao-Han} considered the following system
	\begin{equation*}
		\left\{\begin{array}{ll}
			-\Delta u = \mu v + |v|^{q-2}v + \delta g(x) & \mbox{in} \ \Omega, \\
			-\Delta v = \lambda u + |u|^{\tilde q-2}u + \epsilon f(x) & \mbox{in} \ \Omega, \\
			u = v = 0 & \mbox{on} \ \partial\Omega,
		\end{array}\right.
	\end{equation*}
	assuming that \( q, \tilde q > 2 \) lie on the critical hyperbola \eqref{criticalhyp}. They also assume \( f, g \in L^\infty(\Omega) \) to be nontrivial and nonnegative, since their focus is on positive solutions. In addition, they work with \( 0 < \lambda, \mu < \lambda_1 \). Their results state the existence of a positive solution for all \( 0 < \epsilon, \delta < \epsilon_0 \) and any \( N \geq 3 \), using the sub- and super-solution method. Also, for \( N \geq 4 \) they applied a dual variational method to prove the existence of a second solution. Here again, as in the homogeneous case, the presence of symmetric nonlinearities makes this problem closer to its scalar version, where we know that the critical dimension for such problems is exactly $N=3$.
	
	\medskip
	
	More closely related to our setting is the assymetric case and the result by dos Santos in 2010 \cite{dosSantos1}: Returning to the problem \eqref{main}, consider \( f(x,u) = |u|^{\tilde q - 2}u \) and \( h(x) = \epsilon g(x) \) with \( g \in C^1(\overline\Omega) \), \( g \neq 0 \). Assume that the pair \( (q, \tilde q) \) lies on the critical hyperbola \eqref{criticalhyp}. Consider also the following restrictions on \( q \):
	\begin{equation}\label{restrictq}
		\left\{\begin{array}{l}
			\displaystyle\mbox{For }N=3,4,5,6:\ q<\frac{2(N-1)}{N-2}=2^*-\frac{2}{N-2}; \vspace{2mm} \\
			
			\displaystyle\mbox{For }N=7,8,\dots,12:\ \frac{3}{2}<q<2^*-\frac{2}{N-2}\mbox{ and } \vspace{2mm}\\
			
			\displaystyle \mbox{For }N\geq13:\ \frac{3N}{2N-6}<q<2^*-\frac{2}{N-2}.
		\end{array}\right.
	\end{equation}
	
	Using the Nehari manifold technique on the associated \( p \)-biharmonic equation obtained from the inversion of the first equation (which means that \( p = q/(q-1) \)), the author proved the existence of two solutions provided \( \epsilon \) is small enough. He also proved nonexistence of nonnegative solutions if \( \epsilon \) is large enough.
	
	\medskip
	
	 Using the abstract theory developed in \cite{Perera4}, we extend these results. First, improving the upper bounds imposed on \( q \) in \eqref{restrictq} for $N\ge 6$ and also the lower bounds if $N\ge 13$ (see Corollary \ref{cor:nonhomogeneous}); second, by adding both a \textit{scaled} and a \textit{superscaled} perturbation \( \lambda |u|^{p-2}u + \mu |u|^{r-2}u \), with \( p = q/(q-1) \) and \( p < r < \tilde q \), we prove the existence of two solutions for a wider range of parameters \( \lambda \) and \( \mu \). In other words, we consider the Hamiltonian system
	
	\begin{equation}\label{mainfinal}
		\left\{\begin{array}{ll}
			-\Delta u = |v|^{q-2}v & \mbox{in} \ \Omega, \\
			-\Delta v = \lambda |u|^{p-2}u + \mu |u|^{r-2}u + |u|^{\tilde q-2}u + h(x) & \mbox{in} \ \Omega, \\
			u = v = 0 & \mbox{on} \ \partial\Omega,
		\end{array}\right.
	\end{equation}
	where \( (q,\tilde q) \) satisfies \eqref{criticalhyp}, \( p = q/(q-1) \) (which gives \( \tilde q = p_2^* \) and \( 1 < p < N/2 \)), and \( p < r < p_2^* \). Consider also its equivalent fourth-order nonlinear equation given by
	
	\begin{equation}\label{mainbifinal}
		\left\{\begin{array}{ll}
			\Delta^2_p(u) = \lambda |u|^{p-2}u + \mu |u|^{r-2}u + |u|^{p_2^*-2}u + h(x) & \mbox{in} \ \Omega, \\
			u = \Delta u = 0 & \mbox{on} \ \partial\Omega.
		\end{array}\right.
	\end{equation}
	
	We prove the existence of two solutions for these problems when \( \mu + \|h\|_{L^{(p_2^*)'}} \) is small enough, depending on the parameter \( \lambda \) and its interaction with the Dirichlet eigenvalues of \( \Delta^2_p \). As in the homogenenous case, the statements of the main results on existence and multiplicity of solutions for \eqref{mainbifinal} (and consequently for \eqref{mainfinal}) are given later (see Theorems \ref{theononhomogeneousnonressonant} and \ref{theoresonant_nonhom}), since they depend on some definition and notation introduced in the next section. 

    \medskip
    
    Although problem \eqref{mainbifinal} inspired by the Hamiltonian system \eqref{mainfinal}, it is of interest in its own right.  It can also be viewed as a nonhomogeneous critical problem for the $p$-biharmonic operator, linking our results to the classical work of Tarantello \cite{Tarantello} on the Laplacian.  Since that seminal paper, many authors have extended the theory to other operators and settings; see \cite{Perera4} and the references there for second-order operators, \cite{Guedda} for the biharmonic case (\(p=2\)), and \cite{Clapp} for general polyharmonic problems with \(p=2\).  Our results go further, adding new information even when \(p=2\), because we cover the cases \(\lambda> 0\) and \(\mu\neq 0\), which were not covered in those earlier works.
	
	\section{General critical point theory}
	
	\label{subsec:perturbation}
	
	The existence and multiplicity results proved in this paper rest on the abstract critical point framework developed in \cite{Perera4}. For completeness, we reproduce the theorem here, together with the necessary notation and definitions.
	
	\medskip
	
	Let \( (W,\lVert\cdot\rVert) \) be a uniformly convex Banach space with dual pairing \( (W^{*},\lVert\cdot\rVert_{*}) \) and duality \( \langle\cdot,\cdot\rangle \).
	We say that \( f \in C(W,W^{*}) \) is a \emph{potential operator} if there exists a
	functional \( F \in C^{1}(W,\mathbb{R}) \), called a \emph{potential} for \( f \), such
	that \( F' = f \).
	Let us consider the nonlinear operator equation
	\begin{equation}\label{eq:2.1}
		A_{p}u = \lambda B_{p}u + f(u) + \mu g(u) + h
	\end{equation}
	in \( W^{*} \), where \( A_{p}, B_{p}, f, g \in C(W,W^{*}) \) are potential operators that satisfy the assumptions listed below, \( \lambda > 0 \) and \( \mu \in \mathbb{R} \) are parameters, and \( h \in W^{*}  \).
	
	\begin{enumerate}
		\item[(A1)]
		\( A_{p} \) is \( (p-1) \)-homogeneous and odd for some \( p \in (1,\infty) \):
		\( A_{p}(tu) = |t|^{p-2}t\,A_{p}u \) for all \( u \in W \) and \( t \in \mathbb{R} \).
		\item[(A2)]
		\( \langle A_{p}u,v\rangle \le \lVert u\rVert^{p-1} \lVert v\rVert \)
		for all \( u, v \in W \), and equality holds
		if and only if \( \alpha u = \beta v \) for some \( \alpha, \beta \ge 0 \), not both zero
		(in particular, \( \langle A_{p}u,u\rangle = \lVert u\rVert^{p} \) for all \( u \in W \)).
		\item[(B1)]
		\( B_{p} \) is \( (p-1) \)-homogeneous and odd.
		\item[(B2)]
		\( \langle B_{p}u,u\rangle > 0 \) for all \( u \neq 0 \), and
		\[
		\langle B_{p}u,v\rangle
		\le \langle B_{p}u,u\rangle^{(p-1)/p}\,
		\langle B_{p}v,v\rangle^{1/p}
		\quad \text{for all } u, v \in W.
		\]
		\item[(B3)]
		\( B_{p} \) is a compact operator.
		\item[(F1)]
		The potential \( F \) of \( f \) with \( F(0) = 0 \) satisfies
		\( F(u) = o(\lVert u\rVert^{p}) \) as \( u \to 0 \).
		\item[(F2)]
		\( F(u) \ge 0 \) for all \( u \in W \).
		\item[(F3)]
		\( F \) is bounded on bounded subsets of \( W \).
		\item[(G)]
		The potential \( G \) of \( g \) with \( G(0) = 0 \) is bounded on bounded subsets of \( W \).
	\end{enumerate}
	
	\medskip
	As proved in \cite[Proposition 3.1]{Perera3}, solutions of~\eqref{eq:2.1} coincide with critical points of the
	\( C^{1} \)-functional
	\begin{equation*}
		E(u) =
		I_{p}(u) - \lambda J_{p}(u) - F(u) - \mu G(u) - \langle h, u \rangle,
		\qquad u \in W,
	\end{equation*}
	where
	\begin{equation}\label{eq:2.3}
		I_{p}(u) = \frac{1}{p} \langle A_{p}u, u \rangle,
		\qquad
		J_{p}(u) = \frac{1}{p} \langle B_{p}u, u \rangle
	\end{equation}
	are the potentials of \( A_{p} \) and \( B_{p} \), with \( I_{p}(0) = J_{p}(0) = 0 \). Note that we include the case $h=0$. This allows us to address both homogeneous and nonhomogeneous cases using this same abstract theory.
	
	\medskip
	
	The nonlinear eigenvalue problem
	\begin{equation}\label{eq:eigen}
		A_{p}u = \lambda B_{p}u
	\end{equation}
	plays a key role.  
	Let \( \mathcal{M} := \{ u \in W : I_{p}(u) = 1 \} \).
	Then \( \mathcal{M} \subset W \setminus \{0\} \) is a bounded,
	complete, symmetric \( C^{1} \)-Finsler manifold radially homeomorphic to the unit
	sphere in \( W \), and the eigenvalues of~\eqref{eq:eigen} coincide with the
	critical values of the \( C^{1} \)-functional
	\[
	\Psi(u) = J_{p}(u)^{-1}, \qquad u \in \mathcal{M}.
	\]
	Denote by \( \mathcal{F} \) the class of symmetric subsets of \( \mathcal{M} \) and by \( i(\cdot) \)
	the \( \mathbb{Z}_{2} \)-cohomological index of Fadell and Rabinowitz in \( \mathcal{F} \) (see \cite{Fadell-Rabinowitz}).
	Set \( \mathcal{F}_{k} := \{ A \in \mathcal{F} : i(A) \ge k \} \) and
	\begin{equation*}
		\lambda_{k} := \inf_{A \in \mathcal{F}_{k}} \;
		\sup_{u \in A} \Psi(u), \qquad k \in \mathbb{N}.
	\end{equation*}
	Then \( 0 < \lambda_{1} \le \lambda_{2} \le \cdots \to \infty \) is an unbounded sequence
	of eigenvalues. Notice that
	\[
	\lambda_{1} = \inf_{u \in \mathcal{M}} \Psi(u).
	\]
	Writing \( \Psi^{a} := \{ u \in \mathcal{M} : \Psi(u) \le a \} \) and
	\( \Psi_{a} := \{ u \in \mathcal{M} : \Psi(u) \ge a \} \), if \( \lambda_{k} < \lambda_{k+1} \) then one has
	\begin{equation}\label{eq:index}
		i(\Psi^{\lambda_{k}}) = i(\mathcal{M} \setminus \Psi_{\lambda_{k+1}}) = k
	\end{equation}
	and \( \Psi^{\lambda_{k}} \) contains a compact symmetric subset of index \( k \). One can find details for all these facts in \cite[Theorem 4.6]{Aga-Per-Oreg} and \cite[Theorem 1.3]{Perera3}.
	
	\medskip
	
	Assume there exists \( c^{*}_{\mu,h} > 0 \) such that \( E \) satisfies the
	Palais–Smale condition \( (\mathrm{PS})_{c} \) at all levels \( c < c^{*}_{\mu,h} \).
	Define
	\begin{equation*}
		c^{*} := \liminf_{\mu, \lVert h \rVert_{*} \to 0} c^{*}_{\mu,h},
	\end{equation*}
	and set
	\begin{equation*}
		E_{0}(u) := I_{p}(u) - \lambda J_{p}(u) - F(u), \qquad u \in W.
	\end{equation*}
	Let \( \pi_{\mathcal{M}} \colon W \setminus \{0\} \to \mathcal{M} \),
	\( u \mapsto u / I_{p}(u)^{1/p} \), be the radial projection onto \( \mathcal{M} \).
	
	\begin{theorem}\cite[Theorem 2.1]{Perera4}\label{thm:2.1}
		Let \( \lambda_{k} \le \lambda < \lambda_{k+1} \).
		Assume there exist \( R > 0 \) and, for every sufficiently small \( \delta > 0 \),
		a compact symmetric subset \( C_{\delta} \subset \Psi^{\lambda + \delta} \) with
		\( i(C_{\delta}) = k \) and an element \( w_{\delta} \in \mathcal{M} \setminus C_{\delta} \) such
		that, putting
		\[
		A_{\delta} := \left\{
		\pi_{\mathcal{M}}\bigl((1 - \tau)v + \tau w_{\delta}\bigr)
		: v \in C_{\delta},\; 0 \le \tau \le 1
		\right\},
		\]
		one has
		\begin{align}
			\sup_{u \in A_{\delta}} E_{0}(Ru) &\le 0, \label{eq:2.8}\\
			\sup_{u \in A_{\delta},\,0 \le t \le R} E_{0}(tu) &< c^{*}. \label{eq:2.9}
		\end{align}
		Then there exists \( \mu_{0} > 0 \) such that
		equation~\eqref{eq:2.1} possesses two distinct solutions
		\( u_{1}, u_{2} \) satisfying
		\begin{equation*}
			E(u_{1}) < E(u_{2}), \qquad
			0 < E(u_{2}) < c^{*}_{\mu,h},
		\end{equation*}
		for all \( \mu \in \mathbb{R} \) and \( h \in W^{*} \) with
		\( \lvert \mu \rvert + \lVert h \rVert_{*} < \mu_{0} \).
	\end{theorem}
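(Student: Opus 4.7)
I would follow a Tarantello-type two-solution scheme adapted to the pseudo-index framework of Section~\ref{subsec:perturbation}. The first solution $u_{1}$ is obtained as a local minimum of $E$ in a small closed ball, while the second $u_{2}$ comes from a linking min-max at a level $c \in (0,\, c^{*}_{\mu,h})$ built on the set $A_{\delta}$. Hypotheses~\eqref{eq:2.8}--\eqref{eq:2.9} give, by continuity, that $c$ remains below $c^{*}_{\mu,h}$ for all sufficiently small $(\mu,h)$, and the bound on $|\mu| + \lVert h\rVert_{*}$ is used precisely to absorb the perturbations $\mu G + \langle h,\cdot\rangle$ into the gaps created by the unperturbed geometry of $E_{0}$.

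\textbf{The linking critical point $u_{2}$.} Set $D_{R} := \{tu : u \in A_{\delta},\, 0 \le t \le R\}$, and define
\[
c \;:=\; \inf_{\gamma \in \Gamma}\; \sup_{v\in \gamma(D_{R})} E(v),
\]
where $\Gamma$ is the class of continuous maps $\gamma \colon D_{R} \to W$ that equal the identity on $R A_{\delta}\cup\{0\}$ (equivalently, the pseudo-index class attached to $A_{\delta}$ inside $\mathcal{M}$). For the upper bound, taking $\gamma = \mathrm{id}$ and invoking \eqref{eq:2.9} yields $\sup_{D_{R}} E_{0} < c^{*}$; since $G$ and $\langle h,\cdot\rangle$ are bounded on $D_{R}$ and $c^{*}$ is the $\liminf$ of $c^{*}_{\mu,h}$, choosing $\mu_{0}$ small gives $\sup_{D_{R}} E < c^{*}_{\mu,h}$. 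For the lower bound $c > 0$, I would combine \eqref{eq:index}, namely $i(\mathcal{M}\setminus \Psi_{\lambda_{k+1}}) = k$, with $i(C_{\delta}) = k$ and the join structure of $A_{\delta}$ to force every $\gamma(D_{R})$ to meet $\Psi_{\lambda_{k+1}}$ at a fixed intermediate radius; at such a point $tv$ with $v \in \mathcal{M}$, the bound $I_{p}(tv) - \lambda J_{p}(tv) \ge (1 - \lambda/\lambda_{k+1})\, t^{p}/p$, together with $F(tv) = o(t^{p})$ from (F1) and the smallness of $\mu G + \langle h,\cdot\rangle$, produces $E(tv) \ge \alpha > 0$. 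The assumed $(\mathrm{PS})_{c}$ then delivers a critical point $u_{2}$ with $E(u_{2}) = c$.

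\textbf{The local minimum $u_{1}$.} By (F1)--(F3) and (G), each of $I_{p}$, $J_{p}$, $F$, $G$ is bounded on bounded sets, so $E$ is bounded below on some fixed closed ball $\overline{B_{\sigma}}$ uniformly for small $(\mu,h)$. If $(\mu,h) \neq 0$, a direction $v$ with $\mu G(\rho v) + \rho\langle h, v\rangle > 0$ at some small $\rho$ gives $\inf_{\overline{B_{\sigma}}} E < 0$ (the first-order perturbation beats the $O(\rho^{p})$ unperturbed terms). Ekeland's variational principle on $\overline{B_{\sigma}}$ then yields a $(\mathrm{PS})$-sequence at this strictly negative level, which, being below $c^{*}_{\mu,h}$, converges by the standing compactness hypothesis to a critical point $u_{1}$ with $E(u_{1}) < 0 < E(u_{2})$. (When $(\mu,h)=0$ the trivial solution serves as $u_{1}$, so the statement is consistent with $h=0$ as well.)

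\textbf{Main obstacle.} The delicate step is the lower bound $c > 0$ in the resonant range $\lambda = \lambda_{k}$: the coercivity $I_{p} - \lambda J_{p} \ge 0$ only holds after the test point has been pushed into the high-eigenvalue cone $\{\Psi \ge \lambda_{k+1}\}$, and this push must survive every admissible deformation $\gamma \in \Gamma$. Making the intersection argument precise via the Fadell--Rabinowitz cohomological index on the Finsler manifold $\mathcal{M}$, while simultaneously absorbing $\mu G$ and $\langle h,\cdot\rangle$ uniformly as $(\mu,h) \to 0$, is the genuine analytic heart of the proof; the Palais--Smale compactness at $c$ is then automatic from the standing assumption once the strict inequality $c < c^{*}_{\mu,h}$ has been secured.
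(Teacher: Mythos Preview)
The paper does not contain a proof of this theorem: it is quoted verbatim from \cite[Theorem~2.1]{Perera4} and reproduced ``for completeness'' (see the first paragraph of Section~\ref{subsec:perturbation}). Consequently there is nothing in the present paper to compare your proposal against; the result is used as a black box throughout Section~4.

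That said, a brief assessment of your sketch on its own merits: the overall two-solution architecture (local minimum plus linking/pseudo-index critical point) is the expected one, and your handling of the upper bound $c<c^{*}_{\mu,h}$ via \eqref{eq:2.9} together with the boundedness of $G$ and the definition of $c^{*}$ as a $\liminf$ is correct. Two soft spots, however. First, in the argument for $u_{1}$ you write that for $(\mu,h)\neq 0$ one can find a direction making $\mu G(\rho v)+\rho\langle h,v\rangle>0$ dominate; this is fine when $h\neq 0$, but if $h=0$ and $\mu\neq 0$ hypothesis~(G) gives no growth information on $G$ near the origin, so you cannot conclude $\inf_{\overline{B_{\sigma}}}E<0$. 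Note that the theorem only claims $E(u_{1})<E(u_{2})$, not $E(u_{1})<0$, and the Remark immediately following the statement explicitly allows $u_{1}$ to be the trivial solution when $h=0$. Second, your parenthetical ``equivalently, the pseudo-index class attached to $A_{\delta}$'' conflates two genuinely different constructions: a homotopy class $\Gamma$ of deformations of the cone $D_{R}$ is not the same object as a Benci-type pseudo-index built from the Fadell--Rabinowitz index on $\mathcal{M}$, and the intersection property you need (every admissible image meets the cone $\Psi_{\lambda_{k+1}}$ at a fixed radius) is proved differently in the two settings. The original argument in \cite{Perera4} uses the latter, and the passage from $i(C_{\delta})=k$ and \eqref{eq:index} to the positive lower bound on $c$ goes through the piercing property of the cohomological index rather than through a linking-over-a-sphere argument.
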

	\begin{remark}
In this Theorem we allow \(h\) to be zero, because we will apply the abstract result to \emph{both} the homogeneous problems (Theorems~\ref{newhom1} and~\ref{newhom2}) and the non-homogeneous ones (Theorems~\ref{theononhomogeneousnonressonant} and~\ref{theoresonant_nonhom}).  
Theorem~2.1 in \cite{Perera4} is written only for the case \(h\neq0\); that paper is concerned with obtaining \emph{two} non-trivial solutions for elliptic equations with non-zero data.  
If we set \(h = 0\), the same proof still yields two critical points \(u_1\) and \(u_2\).  
The level estimate \(E(u_2)>0\) ensures \(u_2\not\equiv0\); however, \(u_1\) might be the trivial solution.  
When \(h\neq0\) the original conclusion holds and both \(u_1\) and \(u_2\) are non-trivial.
\end{remark}

	\subsection{Eigenvalue problem}
	
	The abstract eigenvalue problem \eqref{eq:eigen} fits into the framework we use in this work. More precisely, let
	\(W=W^{2,p}(\Omega)\cap W_{0}^{1,p}(\Omega)\) and consider the potential operators
	\(A_{p},B_{p}\in C\!\bigl(W,W^{*}\bigr)\) given by
	\[
	\langle A_{p}u,v\rangle =\int_{\Omega}\!|\Delta u|^{p-2}\,\Delta u\,\Delta v\,dx,
	\qquad
	\langle B_{p}u,v\rangle =\int_{\Omega}\!|u|^{p-2}\,u\,v\,dx .
	\]
	
	Indeed, (A\textsubscript{1}) and (B\textsubscript{1}) are immediate;
	(B\textsubscript{2}) follows from Hölder’s inequality; and
	(B\textsubscript{3}) holds because
	\(W^{2,p}(\Omega)\cap W_{0}^{1,p}(\Omega)\hookrightarrow\hookrightarrow L^{p}(\Omega)\).
	For (A\textsubscript{2}) we have
	\[
	\langle A_{p}u,v\rangle
	\le \int_{\Omega}|\Delta u|^{p-1}|\Delta v|\,dx
	\le \Bigl(\int_{\Omega}|\Delta u|^{p}\,dx\Bigr)^{(p-1)/p}
	\Bigl(\int_{\Omega}|\Delta v|^{p}\,dx\Bigr)^{1/p}
	=\lVert u\rVert^{p-1}\lVert v\rVert ,
	\]
	for all \(u,v\in W^{2,p}(\Omega)\cap W_{0}^{1,p}(\Omega)\);
	equality holds throughout if and only if \(\alpha u=\beta v\) for some
	\(\alpha,\beta\ge 0\), not both zero.
	
	\medskip
	
	Now, consider the eigenvalue problem
	
	\begin{equation}\label{maineigenvalueproblem}
		\left\{\begin{array}{ll}
			\Delta^2_p(u) = \lambda|u|^{p-2}u& \mbox{in} \ \Omega, \\
			u = \Delta u = 0 & \mbox{on} \ \partial\Omega.
		\end{array}\right.
	\end{equation}
	
	Thus, according to the previous discussion, the following result for problem \eqref{maineigenvalueproblem} holds true.
	
	\begin{theorem}\label{theoeigensequence}
		Problem \eqref{maineigenvalueproblem} possesses an unbounded sequence of positive
		eigenvalues
		\(\lambda_{1}\le\lambda_{2}\le\cdots\)
		with the property that if \(\lambda_{m}<\lambda_{m+1}\) then the unit
		sphere
		\[
		S=\bigl\{u\in W^{2,p}(\Omega)\cap W_{0}^{1,p}(\Omega):\lVert u\rVert=1\bigr\}
		\]
		contains a compact symmetric subset \(C\) of cohomological index \(m\)
		such that
		\[
		\int_{\Omega}|u|^{p}\,dx\;\ge\;\frac1{\lambda_{m}}
		\qquad\text{for every }u\in C.
		\]
	\end{theorem}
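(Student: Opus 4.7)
The plan is to feed the abstract eigenvalue machinery of Section~2 into the concrete choice of operators $A_p$ and $B_p$ made just above the theorem, and then rescale from the Finsler manifold $\mathcal{M}=\{I_p(u)=1\}$ to the unit sphere $S=\{\|u\|=1\}$.

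The first step is bookkeeping: since (A1)--(B3) have already been verified for our $A_p$ and $B_p$, I can invoke directly the abstract statements introduced after \eqref{eq:eigen}. They produce an unbounded sequence $0<\lambda_1\le\lambda_2\le\cdots\to\infty$ of critical values of $\Psi(u)=J_p(u)^{-1}$ on $\mathcal{M}$, defined by the cohomological-index minimax $\lambda_k=\inf_{A\in\mathcal{F}_k}\sup_{u\in A}\Psi(u)$. I only need to note that, via the standard Lagrange multiplier computation on the $C^1$-Finsler manifold $\mathcal{M}$, a critical point of $\Psi$ at level $\lambda$ is precisely a weak solution of $A_p u=\lambda B_p u$, i.e., a solution of \eqref{maineigenvalueproblem}. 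In particular all $\lambda_k$ are positive, because $\Psi>0$ on $\mathcal{M}$.

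Next, assume $\lambda_m<\lambda_{m+1}$. Equation~\eqref{eq:index} (which is a consequence of the deformation/excision properties of the index recorded in \cite[Thm.~4.6]{Aga-Per-Oreg}) gives $i(\Psi^{\lambda_m})=m$. The abstract statement also provides a \emph{compact} symmetric subset $C_0\subset\Psi^{\lambda_m}$ with $i(C_0)=m$. By definition of $\Psi$ and $J_p$, each $u\in C_0$ satisfies
\[
\frac{p}{\int_\Omega |u|^p\,dx}=\Psi(u)\le \lambda_m,\qquad\text{i.e.,}\qquad \int_\Omega |u|^p\,dx\ge\frac{p}{\lambda_m}.
\]

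Finally, I pass from $\mathcal{M}$ to the unit sphere $S$. Since $I_p(u)=\tfrac{1}{p}\|u\|^p$, the map $\sigma\colon\mathcal{M}\to S$, $\sigma(u)=p^{-1/p}u$, is an odd homeomorphism; hence $C:=\sigma(C_0)\subset S$ is compact, symmetric, and has the same cohomological index $m$ (the index is invariant under odd homeomorphisms). A direct $p$-homogeneity calculation yields, for every $u\in C_0$,
\[
\int_\Omega |\sigma(u)|^p\,dx=\frac{1}{p}\int_\Omega|u|^p\,dx\ge\frac{1}{\lambda_m},
\]
which is exactly the required inequality on $C$. I do not expect any real obstacle here; the only point where one has to be careful is in the dimensional rescaling between $\mathcal{M}$ and $S$ and in recording that the cohomological index is preserved by the odd homeomorphism $\sigma$, both of which are routine once the abstract result is in hand.
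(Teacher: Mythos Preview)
Your proof is correct and follows exactly the route the paper intends: the paper does not give a separate proof of this theorem but simply says it ``holds true according to the previous discussion,'' meaning the abstract eigenvalue framework with the verified hypotheses (A1)--(B3), together with \eqref{eq:index} and the existence of the compact symmetric set of index $k$ inside $\Psi^{\lambda_k}$. Your only addition is the explicit rescaling $\sigma(u)=p^{-1/p}u$ from $\mathcal{M}=\{\|u\|=p^{1/p}\}$ to $S=\{\|u\|=1\}$, which the paper leaves implicit but which is indeed needed to match the statement as written; your homogeneity computation is correct.
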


\subsection{Variational framework for the \(p\)-biharmonic problem}
Let us define \( W := W^{2,p}(\Omega) \cap W^{1,p}_{0}(\Omega) \), endowed with the usual norm
	\[
	\|u\| = \left( \int_{\Omega} |\Delta u|^{p}\,dx \right)^{\frac{1}{p}}.
	\]
	This space is the environment where we are going to apply the abstract critical point theory given in Section~\ref{subsec:perturbation}. For \( u \in W \), consider the \( C^{1} \) functional
	\begin{equation}\label{functionalE}
		E(u)
		:= \frac{1}{p} \int_{\Omega} |\Delta u|^{p}\,dx
		- \frac{\lambda}{p} \int_{\Omega} |u|^{p}\,dx
		- \frac{\mu}{r} \int_{\Omega} |u|^{r}\,dx
		- \frac{1}{p_{2}^{*}} \int_{\Omega} |u|^{p_{2}^{*}}\,dx
		- \int_{\Omega} h\,u\,dx.
	\end{equation}
	With derivative \( E^\prime(u) \) given by
	\begin{equation*}
		E^\prime(u)v
		= \int_{\Omega} |\Delta u|^{p-2} \Delta u \Delta v\,dx
		- \lambda \int_{\Omega} |u|^{p-2} u\,v\,dx
		- \mu \int_{\Omega} |u|^{r-2} u\,v\,dx
		- \int_{\Omega} |u|^{p_{2}^{*}-2} u\,v\,dx
		- \int_{\Omega} h\,v\,dx.
	\end{equation*}
	Therefore, critical points for \( E \) are the weak solutions of \eqref{mainhombifinal} if $h=0$ or \eqref{mainbifinal} if $h\neq 0$.
	
	Denote by \( \|u\|_\gamma \) the norm of \( u \) in the Lebesgue space \( L^{\gamma}(\Omega) \). 

    Let us also define
	\begin{equation}\label{S2p}
		S_{2,p}:=\inf_{u\in D^{2,p}(\mathbb{R}^N)\setminus\{0\}}
		\frac{\displaystyle\int_{\mathbb{R}^N}|\Delta u|^{p}\,dx}{\left(\displaystyle\int_{\mathbb{R}^N}|u|^{p_{2}^{*}}\,dx\right)^{p/p_{2}^{*}}}.
	\end{equation}
	
	Then, \( S_{2,p} \) denotes the sharp constant associated with the Sobolev embedding  
	\[
	D^{2,p}(\mathbb{R}^N) \hookrightarrow L^{p_{2}^{*}}(\mathbb{R}^N), 
	\qquad \text{where } p_{2}^{*} := \frac{Np}{N - 2p}.
	\]
	The space \( D^{2,p}(\mathbb{R}^N) \) is understood as the closure of 
	\( C_{0}^{\infty}(\mathbb{R}^N) \) with respect to the norm  
	\[
	\|u\| := \left( \int_{\mathbb{R}^N} |\Delta u|^{p}\,dx \right)^{1/p}.
	\]

    We are now ready to state the main results of this paper. 
	
	\section{Main results}
    
	In this section we state the theorems proved in the paper.  We split the presentation into two parts: one for the \emph{homogeneous} problem ($h=0$) and one for the \emph{non-homogeneous} problem ($h\neq0$).  In both situations the existence results are obtained by applying the abstract critical-point theorem recalled in Theorem~\ref{thm:2.1}.  We also give two multiplicity results for the Hamiltonian system \eqref{mainagain}; they follow from known theorems for the associated $p$-biharmonic equation.  The passage from the single fourth-order equation to the system   is justified by standard regularity arguments available in the literature.
    
	\subsection{Main results for the homogeneous case}
    
The next three results complement the work of \cite{dosSantos-Melo}. The first one guarantees a non-trivial solution of~\eqref{mainhombifinal} for every \(\lambda>0\) that is \emph{not} an eigenvalue in the sequence given by Theorem~\ref{theoeigensequence}.

	\begin{theorem}[Non-resonant, homogeneous case] \label{newhom1}
Let $\Omega\subset\R^{N}$ be a smooth bounded domain with $N\ge 6$ and let
\[
  \frac{N}{N-2} \;<\; p \;\le\; \sqrt{\frac N2},
  \qquad
  p<r<p_{2}^{*}:=\frac{Np}{N-2p}.
\]
Assume that $\lambda>0$ is \emph{not} an eigenvalue of problem~\eqref{maineigenvalueproblem} (see Theorem~\ref{theoeigensequence}) and let $\mu\in\R$.  

\smallskip
Then there exists $\mu_{0}>0$ such that, for every
\(|\mu|<\mu_{0},\)
the problem~\eqref{mainhombifinal} possesses a nontrivial  solution
$u_{0}\in W^{2,p}(\Omega)\cap W^{1,p}_{0}(\Omega)$ satisfying
\[
   0<E(u_{0})<\frac{2}{N}\,S_{2,p}^{\,N/2p},
\]
where $E$ is the variational functional defined in~\eqref{functionalE}, with $h=0$, and
$S_{2,p}$ is the optimal Sobolev constant from~\eqref{S2p}. If $p=\frac{N}{N-2}$ then, there exists $\lambda_0>0$ such that the same result holds for all $\lambda\geq \lambda_0$ which is not an eigenvalue. 
	    
	\end{theorem}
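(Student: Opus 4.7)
The plan is to apply Theorem~\ref{thm:2.1} in the setting $W = W^{2,p}(\Omega)\cap W_{0}^{1,p}(\Omega)$ with the operators $A_{p}, B_{p}$ of Section~2.1, the critical nonlinearity $f(u) = |u|^{p_{2}^{*}-2}u$ (potential $F(u) = \frac{1}{p_{2}^{*}}\int_{\Omega}|u|^{p_{2}^{*}}\,dx$), the subcritical perturbation $g(u) = |u|^{r-2}u$ (potential $G(u) = \frac{1}{r}\int_{\Omega}|u|^{r}\,dx$) and $h\equiv 0$. The non-resonance assumption yields an integer $k\ge 0$ with $\lambda_{k}<\lambda<\lambda_{k+1}$ (convention $\lambda_{0}:=0$). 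The desired solution $u_{0}$ will be the second critical point $u_{2}$ produced by the abstract theorem, which is nontrivial because $E(u_{2})>0$ (remark after Theorem~\ref{thm:2.1}).

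Hypotheses (A1)--(A2) and (B1)--(B3) are verified in Section~2.1; (F1), (F3) and (G) follow from the embeddings $W\hookrightarrow L^{p_{2}^{*}}(\Omega)$ and $W\hookrightarrow L^{r}(\Omega)$ together with $r,p_{2}^{*}>p$, and (F2) is immediate. A standard concentration--compactness analysis (Lions' second lemma for bounded sequences in $D^{2,p}$) shows that any $(\mathrm{PS})_{c}$ sequence for $E$ converges strongly once more than one bubble of mass $\frac{2}{N}S_{2,p}^{N/(2p)}$ has been discarded; boundedness of $(\mathrm{PS})_{c}$ sequences is secured by the subcritical growth of $g$ combined with the non-resonance of $\lambda$. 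Letting $|\mu|+\|h\|_{*}\to 0$ yields
\[
c^{*}_{\mu,h}\;\longrightarrow\;\frac{2}{N}\,S_{2,p}^{\,N/(2p)},
\]
so in Theorem~\ref{thm:2.1} one may take $c^{*} = \frac{2}{N}S_{2,p}^{N/(2p)}$.

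Combining Theorem~\ref{theoeigensequence} with \eqref{eq:index}, one obtains for every small $\delta>0$ a compact symmetric subset $C_{\delta}\subset\Psi^{\lambda_{k}}\subset\Psi^{\lambda+\delta}$ of cohomological index $k$ (in the mountain-pass case $k=0$ we set $C_{\delta}=\emptyset$ and $A_{\delta}=\{w_{\delta}\}$ by the usual convention). The element $w_{\delta}$ is taken as $w_{\delta} := \pi_{\mathcal{M}}(\varphi U_{\varepsilon})$, where $U_{\varepsilon}$ is the Aubin--Talenti extremal for $S_{2,p}$ concentrated at a fixed interior point $x_{0}\in\Omega$, $\varphi\in C_{c}^{\infty}(\Omega)$ is a cut-off equal to $1$ near $x_{0}$, and $\varepsilon=\varepsilon(\delta)$ is a small concentration parameter to be chosen. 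Condition~\eqref{eq:2.8} is then immediate since $F$ grows like $\|u\|^{p_{2}^{*}}$ with $p_{2}^{*}>p$ and $A_{\delta}$ is compact, so $E_{0}(Ru)\to -\infty$ uniformly on $A_{\delta}$ as $R\to\infty$.

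The main obstacle is the strict energy inequality~\eqref{eq:2.9}. At the tip of the cone ($\tau=1$) this reduces to the $p$-biharmonic Brezis--Nirenberg estimate
\[
\sup_{t\ge 0}\,E_{0}(t\,w_{\delta})\;<\;\frac{2}{N}\,S_{2,p}^{\,N/(2p)},
\]
which is proven by expanding the Sobolev quotient of $\varphi U_{\varepsilon}$ and showing that the gain $\lambda\int_{\Omega}|\varphi U_{\varepsilon}|^{p}\,dx$ strictly dominates the Sobolev defect of $\varphi U_{\varepsilon}$. This comparison is precisely what forces $p>N/(N-2)$ (so that $U_{\varepsilon}\in L^{p}$ with enough decay) and $p\le\sqrt{N/2}$ (so that the $L^{p}$-mass beats the Sobolev remainder in powers of $\varepsilon$). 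At the base ($\tau=0$) the estimate is free: $v\in C_{\delta}$ satisfies $\Psi(v)\le\lambda_{k}<\lambda$, making $t\mapsto E_{0}(tv)$ nonpositive on $[0,R]$ for $\delta$ small. For intermediate $\tau$ one expands $\int|(1-\tau)v+\tau w_{\delta}|^{p_{2}^{*}}\,dx$ by a convexity-type inequality and uses the concentration of $w_{\delta}$ at $x_{0}$ to render the cross terms of higher order in $\varepsilon$, so the energy along the cone does not exceed the sum of the two endpoint energies already controlled. Finally, in the boundary case $p=N/(N-2)$ the $L^{p}$-gain picks up a logarithmic defect and the preceding comparison survives only if $\lambda\ge\lambda_{0}$ for some threshold $\lambda_{0}>0$; this accounts for the last sentence of the theorem.
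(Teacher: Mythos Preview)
Your outline identifies the abstract framework correctly, but the treatment of \eqref{eq:2.9} for intermediate $\tau$ hides the main difficulty. You assert that the cross terms in $E_0\bigl((1-\tau)v+\tau w_\delta\bigr)$ are ``of higher order in $\varepsilon$'' by concentration; however, this would require quantitative control of $v$, $\nabla v$ and $\Delta v$ near the concentration point, and the elements $v\in C_\delta\subset\Psi^{\lambda_k}$ are a priori only in $W^{2,p}\cap W_0^{1,p}$ with no $L^\infty$ or $C^2_{\mathrm{loc}}$ bound. Moreover, the term $\int_\Omega|\Delta((1-\tau)v+\tau w_\delta)|^p\,dx$ does not split into $\int|\Delta v|^p+\int|\Delta w_\delta|^p$ plus a controllable remainder when the supports overlap and $p\ne 2$, so a ``convexity-type inequality'' does not suffice. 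The paper avoids this problem altogether by a \emph{disjoint-support construction}: first it upgrades $C_0$ to be bounded in $L^\infty\cap C^2_{\mathrm{loc}}$ via an iteration of $(\Delta_p^2)^{-1}$ (Proposition~\ref{adapt_perera_lemma}); then it multiplies each $u\in C_0$ by a cutoff vanishing on $B_{3\rho/4}$, producing a new set $C(\rho)$ of the same index contained in $\Psi^{\lambda_k+c_1\rho^{N-2p}}$ (Lemma~\ref{lemmadocro}); and finally it takes the truncated Talenti profile $w_{\varepsilon,\rho}$ supported in $\overline{B_{\rho/2}}$. With these choices $E_0(tv+\tau w_{\varepsilon,\rho})=E_0(tv)+E_0(\tau w_{\varepsilon,\rho})$ exactly, and \eqref{eq:2.9} reduces to two independent scalar estimates (Lemmas~\ref{lemmasupofcdelta} and~\ref{lemmaestimatewepsilonrhofinal}). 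This hole-punching step is the missing idea in your argument.

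A minor correction: at $p=\tfrac{N}{N-2}$ there is no logarithmic defect in $\|w_{\varepsilon,\rho}\|_p^p$. What happens instead is that for $p<A(N)=\tfrac{2N-2}{N}$ the Sobolev remainder in Lemma~\ref{estimatesfortalentifunctions} is of order $(\rho^{-1}\varepsilon)^{N(p-1)}$, and $N(p-1)=2p$ precisely when $p=\tfrac{N}{N-2}$; the gain $-C\lambda\varepsilon^{2p}$ and the loss $+C(\rho^{-1}\varepsilon)^{2p}$ then have the same order, and only a sufficiently large $\lambda$ makes the net sign negative, giving the threshold $\lambda_0$.
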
 

The next theorem treats the \emph{resonant} situation, that is, when $\lambda=\lambda_k$ for some eigenvalue in Theorem~\ref{theoeigensequence}.
Our method requires a few extra assumptions; in particular, it works only in dimensions $N\ge 8$. Before stating the result, let us define the following polynomial
\[
   G_{N}(p)\;:=\;4p^{3}+(2N-8)p^{2}-N^{2}p+N^{2},
\]
For $N\ge 8$, let 
\begin{equation} \label{defofp0}
    p_{0}(N)\in\Bigl(\tfrac{N}{\,N-2\,},\,\tfrac{2N-2}{N}\Bigr)
\end{equation}
be the \emph{unique} root of $G_{N}(p)$ in this interval.
    
    \begin{theorem}[Resonant, homogeneous case]\label{newhom2}
Let $\Omega\subset\R^{N}$ be a smooth bounded domain and assume
\(N\ge 8.\)  Assume that
\[
   p_{0}(N) \;<\; p \;<\; \frac{N}{\sqrt{2(N+2)}} ,
   \qquad
   p<r<p_{2}^{*}:=\frac{Np}{N-2p}.
\]
where $p_0(N)$ is given in \eqref{defofp0}.
\smallskip
Let $\lambda=\lambda_{k}$ be an eigenvalue of problem~\eqref{maineigenvalueproblem} given in Theorem~\ref{theoeigensequence} and let $\mu\in\R$.  
Then there exists $\mu_{0}>0$ such that, for every
\( |\mu|<\mu_{0},\)
the problem~\eqref{mainbifinal} admits a nontrivial  solution
$u_{0}\in W^{2,p}(\Omega)\cap W^{1,p}_{0}(\Omega)$ satisfying
\[
   0<E(u_{0})<\frac{2}{N}\,S_{2,p}^{\,N/2p},
\]
where $E$ is given in~\eqref{functionalE} with $h=0$ and $S_{2,p}$ in~\eqref{S2p}. 
    \end{theorem}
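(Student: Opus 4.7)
The plan is to verify the hypotheses of Theorem~\ref{thm:2.1} with $h = 0$ in the resonant setting $\lambda = \lambda_k$.  The overall strategy mirrors the non-resonant case, but resonance forces a more delicate choice of the concentrating element $w_\delta$ and a sharper expansion of the functional along the interpolating path.  First I would invoke the Palais--Smale condition at levels below the critical threshold $c^{*} = \tfrac{2}{N}\,S_{2,p}^{\,N/(2p)}$; this is a concentration-compactness argument, common to Theorems~\ref{newhom1} and~\ref{newhom2}, that I assume is established in an earlier lemma.  Next, picking $k$ to be the largest index with $\lambda_{k} = \lambda$ so that $\lambda_{k} < \lambda_{k+1}$, Theorem~\ref{theoeigensequence} supplies a compact symmetric subset $C \subset \Psi^{\lambda_{k}}$ of cohomological index $k$.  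I take $C_{\delta} = C$ for every $\delta > 0$: trivially $C_{\delta} \subset \Psi^{\lambda_{k} + \delta}$, and for every $v \in C$, $\int_{\Omega} |v|^{p}\,dx \geq 1/\lambda_{k} = 1/\lambda$, whence $1 - \lambda J_{p}(v) \leq 0$ and $\sup_{t \geq 0} E_{0}(tv) \leq 0$.

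For $w_{\delta}$ I take the radial projection onto $\mathcal{M}$ of a cut-off Aubin--Talenti profile $\phi\,U_{\epsilon}$ concentrating at a fixed point $x_{0} \in \Omega$, with $\epsilon = \epsilon(\delta)$ chosen small enough that $w_{\delta} \notin C$.  Condition~\eqref{eq:2.8} is routine: $A_{\delta}$ is bounded in $\mathcal{M}$, so for $R$ large the $L^{p_{2}^{*}}$ term dominates uniformly and $E_{0}(Ru) \leq 0$.  The heart of the proof is~\eqref{eq:2.9}.  A direct maximisation in $t$ shows that for $u \in \mathcal{M}$ with $1 - \lambda J_{p}(u) > 0$, the inequality $\sup_{t \geq 0} E_{0}(tu) < c^{*}$ is equivalent to
\[
\|u\|^{p} - \lambda \int_{\Omega} |u|^{p}\,dx
\;<\; S_{2,p} \Bigl(\int_{\Omega} |u|^{p_{2}^{*}}\,dx\Bigr)^{p/p_{2}^{*}},
\]
and, parametrising $A_{\delta}$ by $(v, \tau) \in C \times [0,1]$, this must be checked uniformly.

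For $\tau$ near $0$, continuity of $J_{p}$ together with the spectral property of $C$ yields $1 - \lambda J_{p}(u) \leq 0$ and the bound is trivial.  For $\tau$ near $1$, $u$ is essentially the bubble $w_{\delta}$, and the classical Talenti expansion gives $\|w_{\delta}\|^{p}/\|w_{\delta}\|_{p_{2}^{*}}^{p} = S_{2,p} + O(\epsilon^{\alpha})$ for some explicit $\alpha = \alpha(N, p)$, while $\int_{\Omega} |w_{\delta}|^{p}\,dx \sim \epsilon^{2p}$ (this requires $U_{\epsilon} \in L^{p}$, i.e.\ $p < \sqrt{N/2}$); the inequality then follows once $2p < \alpha$, which is precisely the content of the upper bound $p < N/\sqrt{2(N+2)}$.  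The main obstacle is the intermediate regime, where $u$ is a genuine convex combination of an eigenfunction and a concentrating bubble.  Here one exploits the near-orthogonality of $v$ and $w_{\delta}$ as $\epsilon \to 0$: the essential supports separate and the cross-terms in the expansions of $\|u\|^{p}$, $\int |u|^{p_{2}^{*}}\,dx$ and $\int |u|^{p}\,dx$ are of higher order.  The lower bound $p > p_{0}(N)$ coming from the polynomial $G_{N}(p) = 4p^{3} + (2N - 8)p^{2} - N^{2}p + N^{2}$ is the algebraic condition that balances these competing exponents and guarantees that all error terms can be absorbed simultaneously, yielding uniform strict inequality over $C \times [0, R]$.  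An application of Theorem~\ref{thm:2.1} then delivers the desired critical point $u_{0}$ with $0 < E(u_{0}) < c^{*}$.
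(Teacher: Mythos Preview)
Your overall strategy (apply Theorem~\ref{thm:2.1} with $h=0$) is correct, and your treatment of the Palais--Smale threshold and of the choice of the largest index $k$ with $\lambda_k<\lambda_{k+1}$ is fine.  The substantive divergence from the paper, and the source of a genuine gap, is your handling of the interaction between the eigenfunction set $C$ and the bubble $w_\delta$.

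The paper does \emph{not} take $C_\delta=C$ unchanged.  It first upgrades $C$ to a compact symmetric set $C_0\subset\Psi^{\lambda_k}$ of index $k$ that is bounded in $L^\infty(\Omega)\cap C^2_{\mathrm{loc}}(\Omega)$ (Proposition~\ref{adapt_perera_lemma}), then replaces each $u\in C_0$ by $u_\rho(x)=\eta(|x|/\rho)\,u(x)$, punching a hole of radius $\rho$ around the concentration point, and takes $C_\delta=C(\rho)$.  The concentrating profile $w_{\varepsilon,\rho}$ is supported inside that hole, so the supports are \emph{exactly} disjoint and $E_0(tv+\tau w_{\varepsilon,\rho})=E_0(tv)+E_0(\tau w_{\varepsilon,\rho})$ with no cross-terms whatsoever.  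The price is that $C(\rho)\subset\Psi^{\lambda_k+c_1\rho^{N-2p}}$ only, so in the resonant case $\sup_{v,t}E_0(tv)\le c\,\rho^{N(N-2p)/(2p)}$ is now positive.  One then couples $\rho=\varepsilon^\alpha$ and balances this term against the truncation error in the bubble; the constraints $p_0(N)<p<N/\sqrt{2(N+2)}$ are exactly the conditions under which a suitable $\alpha\in(0,1)$ exists.  In particular $G_N(p)<0$ encodes $\frac{4p^2}{N(N-2p)}<\frac{(N-2)p-N}{N(p-1)}$, which is the compatibility of the two bounds on $\alpha$ when $p<\tfrac{2N-2}{N}$; it has nothing to do with cross-terms.

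Your alternative keeps $C$ intact, which indeed gives $\sup_{v,t}E_0(tv)\le0$ for free, but then you must control genuine cross-terms in $\|\Delta(tv+\tau w_\delta)\|_p^p$, $\|tv+\tau w_\delta\|_p^p$ and $\|tv+\tau w_\delta\|_{p_2^*}^{p_2^*}$ for $p\neq2$.  Your assertion that these are ``of higher order'' by ``near-orthogonality'' is not justified: without the $L^\infty\cap C^2_{\mathrm{loc}}$ bound of Proposition~\ref{adapt_perera_lemma}, functions in $C$ need not even be bounded near $x_0$, so integrals like $\int|\Delta v|^{p-1}|\Delta w_\delta|$ over the bubble region are not a priori small, and the required uniformity over the compact set $C$ is unclear.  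Your claim that $p_0(N)$ emerges from balancing these cross-term exponents is unsubstantiated in your own framework and does not match the actual origin of the constraint.  Likewise, your ``$\tau$ near $0$'' step needs more: when $v$ is an eigenfunction one has $J_p(v)=1/\lambda$ with equality, so continuity alone does not give $1-\lambda J_p(u)\le0$ for nearby $u$.  The hole-punching device is precisely what lets the paper bypass all of these issues.
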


Putting these two results together, we have the following result of existence of solution to the System \eqref{mainhomfinal}:
   
    \begin{corollary}\label{corollaryhom} Let $\Omega\subset\mathbb{R}^{N}$ be smooth and bounded.
Assume the pair $(q,\tilde q)$ satisfies~\eqref{criticalhyp},
set $p:=\dfrac{q}{q-1}$ and choose any $r$ with $p<r<\tilde q$.
Fix $\mu\in\mathbb{R}$ and $\lambda>0$.
    	\begin{enumerate}[label=\textup{(\roman*)}]
\item \emph{Non-resonant case.}
      Suppose $\lambda$ is \emph{not} one of the eigenvalues listed in
      Theorem~\ref{theoeigensequence}.
      If $N\ge 6$ and
      \[
        \frac{N+\sqrt{2N}}{N-2}\;\le q\;<\;\frac{N}{2},
      \]
      then there exists $\mu_{0}>0$ such that, whenever
      \(|\mu|<\mu_{0}, \) problem~\eqref{mainhomfinal} possesses a classical,
      non-trivial solution.
      In the borderline case $q=N/2$ the same conclusion holds whenever $\lambda\ge\lambda_{0}$ for some $\lambda_{0}>0$.

\item \emph{Resonant case.}
      Assume $\lambda=\lambda_{k}$ for some eigenvalue $\lambda_{k}$
      of Theorem~\ref{theoeigensequence}.
      If $N\ge 8$ and
      \[
        \frac{N}{\,N-\sqrt{2(N+2)}\,}
        \;<\; q \;<\;
        \frac{p_{0}(N)}{p_{0}(N)-1},
      \]
      where $p_{0}(N)$ is defined in \eqref{defofp0},
      then the same  condition on $\mu$ as above guarantees
      a classical non-trivial solution to~\eqref{mainhomfinal}.
\end{enumerate}
\end{corollary}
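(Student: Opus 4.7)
The proof proposal for Corollary~\ref{corollaryhom} is to transfer Theorems~\ref{newhom1} and~\ref{newhom2} from the scalar $p$-biharmonic equation \eqref{mainhombifinal} back to the Hamiltonian system \eqref{mainhomfinal} by means of the standard inversion. First I would note that, since $(q,\tilde q)$ lies on the critical hyperbola \eqref{criticalhyp} and $p=q/(q-1)$, one has $\tilde q = p_{2}^{*}=Np/(N-2p)$ and $1<p<N/2$. Given a solution $u\in W^{2,p}(\Omega)\cap W^{1,p}_{0}(\Omega)$ of \eqref{mainhombifinal} I would set
\[
v \;:=\; -\,|\Delta u|^{p-2}\Delta u .
\]
A direct computation using $(p-1)(q-1)=1$ shows that $|v|^{q-2}v=-\Delta u$ and that the second equation of \eqref{mainhomfinal} is simply $-\Delta v=\Delta^{2}_{p}u$; thus $(u,v)$ solves the system.

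The next step is the translation of parameters. Because the map $p\mapsto p/(p-1)$ is strictly decreasing on $(1,\infty)$, the constraint $\frac{N}{N-2}<p\le\sqrt{N/2}$ appearing in Theorem~\ref{newhom1} is equivalent, after a short algebraic manipulation (rationalising $\sqrt{N/2}/(\sqrt{N/2}-1)$), to
\[
\frac{N+\sqrt{2N}}{N-2}\;\le\;q\;<\;\frac{N}{2},
\]
while the borderline $p=N/(N-2)$ corresponds to $q=N/2$, yielding the $\lambda\ge\lambda_{0}$ clause of part (i). Likewise, the range $p_{0}(N)<p<N/\sqrt{2(N+2)}$ in Theorem~\ref{newhom2} transforms into
\[
\frac{N}{\,N-\sqrt{2(N+2)}\,}\;<\;q\;<\;\frac{p_{0}(N)}{p_{0}(N)-1},
\]
which is precisely the hypothesis of part (ii). The condition $p<r<p_{2}^{*}$ is just $p<r<\tilde q$, as stated.

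Having matched the hypotheses, I would invoke Theorem~\ref{newhom1} in part (i) and Theorem~\ref{newhom2} in part (ii) to obtain $\mu_{0}>0$ such that, for every $|\mu|<\mu_{0}$, problem \eqref{mainhombifinal} admits a nontrivial weak solution $u\in W^{2,p}(\Omega)\cap W^{1,p}_{0}(\Omega)$. Defining $v$ as above produces a nontrivial weak solution $(u,v)$ of \eqref{mainhomfinal}.

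The remaining point is the passage to a \emph{classical} solution. Since $u\in L^{p_{2}^{*}}(\Omega)$, the right-hand side of \eqref{mainhombifinal} lies in $L^{(p_{2}^{*})'}(\Omega)$, so a Moser-type bootstrap on the second-order problem $-\Delta w=|u|^{p_{2}^{*}-2}u+\mu|u|^{r-2}u+\lambda|u|^{p-2}u$ (where $w=|\Delta u|^{p-2}(-\Delta u)$, i.e.\ $w=-v$) together with the known $L^{\infty}$ and $C^{1,\alpha}$ regularity theory for $p$-biharmonic equations with critical growth (as used in \cite{dosSantos-Melo}) yields $u\in L^{\infty}(\Omega)$; standard Schauder estimates applied successively to the two second-order equations of the system then upgrade $(u,v)$ to a classical solution. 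The only genuinely delicate step here is this regularity bootstrap, because the $p$-biharmonic operator lacks the full elliptic regularity available for $p=2$; however, all the ingredients needed are already present in the references cited above, so this step amounts to quoting the appropriate lemma rather than developing new machinery.
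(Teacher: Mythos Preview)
Your proposal is correct and follows essentially the same approach as the paper: invoke Theorems~\ref{newhom1} and~\ref{newhom2} for the scalar equation, set $v=|\Delta u|^{p-2}(-\Delta u)$, and appeal to regularity results (the paper cites \cite[Lemma~3.2]{dosSantos}, \cite[Section~3]{Hulshoff-Vander}, \cite[Lemma~1]{dosSantos-Melo} and \cite[Theorem~2.3]{doO-Macedo-Ribeiro} rather than sketching a bootstrap) to conclude that $(u,v)$ is a classical solution of the system. Your explicit verification that the $p$-intervals translate into the stated $q$-intervals is a detail the paper leaves implicit, but otherwise the arguments coincide.
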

\begin{proof}
    One only needs to notice that solutions of \eqref{mainhombifinal} give solutions to \eqref{mainhomfinal}. For that, regularity results can be used. Since \( \Omega \) is smooth and the nonlinearites of the problem are sufficiently regular, solutions of \eqref{mainhombifinal} are \( C^{2,\gamma}(\overline\Omega) \) and putting \( v = |\Delta u|^{p-2}(-\Delta u) \), one can apply \cite[Lemma 3.2]{dosSantos} (see also \cite[Section 3]{Hulshoff-Vander}, \cite[Lemma 1]{dosSantos-Melo} and \cite[Theorem 2.3]{doO-Macedo-Ribeiro}) to see that \( v \in C^{2,\gamma}(\overline\Omega) \) and the pair \( (u,v) \) is a classical solution of \eqref{mainhombifinal}. 
\end{proof}

	 The next two results refer to the multiplicity of solutions to problem \eqref{mainagain}. As mentioned in the Introduction, the main point is that their proofs do not use the concentration profiles of Aubin-Talenti type and therefore there is no restriction on the dimension. The price to pay is that we need to get $\lambda$ close to the eigenvalues. 
	
	\begin{theorem}(Corollary of \cite[Theorem 1.2]{Lu-Fu})\label{theotrivial1}
		Let $N\geq 3$. Suppose that the pair \( (q,\tilde q) \) verify \eqref{criticalhyp} and consider \( p = q/(q-1) \) (then \( p_2^* = \tilde q \)). Let \( (\lambda_m) \) be the sequence of eigenvalues of \eqref{maineigenvalueproblem} given in Theorem \ref{theoeigensequence}. Then:
		\begin{enumerate}
			\item[(i)] If \( \displaystyle \lambda_1 - \frac{S_{2,p}}{|\Omega|^{p/N}} < \lambda < \lambda_1 \), then problem~\eqref{mainagain} with \( r = p = q/(q-1) \)
			has a pair of nontrivial solutions \( \pm u_\lambda \) such that \( u_\lambda \to 0 \) in \( W^{2,p}(\Omega) \cap W^{1,p}_0(\Omega) \) as \( \lambda \to \lambda_1 \).
			
			\item[(ii)] If for some \( m,k \in \mathbb{N} \) the spectrum satisfies
			\[
			\lambda_m \leq \lambda < \lambda_{m+1} = \cdots = \lambda_{m+k} < \lambda_{m+k+1}, \quad \text{and } \lambda > \lambda_{m+1} - \frac{S_{2,p}}{|\Omega|^{p/N}},
			\]
			then the problem has \( k \) distinct pairs of nontrivial solutions \( \pm u^\lambda_j \) (for \( j = 1,\dots,k \)) such that \( u^\lambda_j \to 0 \) as \( \lambda \to \lambda_{m+1} \).
		\end{enumerate}
	\end{theorem}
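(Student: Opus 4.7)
The plan is to reduce the Hamiltonian system \eqref{mainagain} (with $r=p=q/(q-1)$) to the scalar $p$-biharmonic equation \eqref{mainlufu}, quote \cite[Theorem 1.2]{Lu-Fu} to obtain the required multiplicity of critical points, and then pass back to the system using the same regularity argument used in the proof of Corollary~\ref{corollaryhom}.

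First, I would carry out the inversion step. Since $p = q/(q-1)$ is the Hölder conjugate of $q$, the first equation $-\Delta u = |v|^{q-2}v$ is equivalent to $v = |\Delta u|^{p-2}(-\Delta u)$. Substituting in the second equation of \eqref{mainagain} gives
\[
   \Delta_p^2 u \;=\; \lambda\,|u|^{p-2}u + |u|^{\tilde q-2}u \quad \text{in }\Omega, \qquad u=\Delta u=0 \text{ on }\partial\Omega.
\]
Because $(q,\tilde q)$ lies on the critical hyperbola \eqref{criticalhyp}, one has $\tilde q = p_{2}^{*} = Np/(N-2p)$, so the reduced problem is exactly \eqref{mainlufu} with the $p$-linear choice $r=p$.

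Second, I would apply \cite[Theorem 1.2]{Lu-Fu} to this scalar equation. The authors work on the same space $W = W^{2,p}(\Omega)\cap W^{1,p}_{0}(\Omega)$ with the same Navier boundary conditions, and their eigenvalue sequence for $\Delta_{p}^{2}$ coincides with the sequence $(\lambda_m)$ produced by Theorem~\ref{theoeigensequence}; likewise the Sobolev constant $S_{2,p}$ and the gap term $S_{2,p}/|\Omega|^{p/N}$ match our normalisations. Their minimax construction, based on the cohomological index, does \emph{not} rely on Aubin–Talenti concentration profiles to place the critical level below the compactness threshold; this is precisely why no restriction on $N$ beyond $N\ge 3$ is needed, and why the hypothesis takes the form of a two-sided window around the relevant eigenvalue. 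Under the assumption
$\lambda_{1}-S_{2,p}/|\Omega|^{p/N}<\lambda<\lambda_{1}$
(case (i)), or under the corresponding multiplicity window around $\lambda_{m+1}$ (case (ii)), the theorem yields either one pair $\pm u_{\lambda}$ or $k$ distinct pairs $\pm u^{\lambda}_{j}$ of nontrivial critical points of the functional in~\eqref{functionalE} with $\mu=0$, $h=0$, together with the stated convergence to $0$ as $\lambda$ approaches the resonant eigenvalue.

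Finally, to lift these scalar solutions to solutions of the Hamiltonian system, I would invoke regularity as in the proof of Corollary~\ref{corollaryhom}: standard bootstrap arguments give $u\in C^{2,\gamma}(\overline\Omega)$, and setting $v:=|\Delta u|^{p-2}(-\Delta u)$ together with \cite[Lemma 3.2]{dosSantos} shows $v\in C^{2,\gamma}(\overline\Omega)$ and that $(u,v)$ is a classical solution of~\eqref{mainagain}; nontriviality of $v$ follows from $u\not\equiv 0$ and the defining formula for $v$. The proof has no intrinsically hard step: the only subtle point is bookkeeping, namely checking that the variational set-up, the eigenvalue indexing, and the constants in \cite[Theorem 1.2]{Lu-Fu} correspond precisely to ours, so that applying their theorem to the inverted equation produces exactly the pairs of solutions claimed here.
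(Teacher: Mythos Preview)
Your proposal is correct and follows essentially the same route as the paper: invoke \cite[Theorem 1.2]{Lu-Fu} for the reduced scalar equation \eqref{mainlufu}, then lift the solutions to the system via the regularity argument citing \cite[Lemma 3.2]{dosSantos}. The paper's proof is terser but identical in content, and your added remarks on the inversion step and on the constants matching are accurate bookkeeping.
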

	
	\begin{proof}
		The pair \( (q,\tilde q) \) satisfies \eqref{criticalhyp} and therefore the choice of \( p = q/(q-1) \) makes the problem \eqref{mainlufu} be exactly the one addressed in \cite[Theorem 1.2]{Lu-Fu}. Therefore, one only needs to notice that solutions of \eqref{mainlufu} give solutions to \eqref{mainagain}. For that, regularity results can be used. Since \( \Omega \) is smooth, solutions of \eqref{mainlufu} are \( C^{2,\gamma}(\overline\Omega) \) and putting \( v = |\Delta u|^{p-2}(-\Delta u) \), one can apply \cite[Lemma 3.2]{dosSantos} (see also \cite[Section 3]{Hulshoff-Vander}, \cite[Lemma 1]{dosSantos-Melo} and \cite[Theorem 2.3]{doO-Macedo-Ribeiro}) to see that \( v \in C^{2,\gamma}(\overline\Omega) \) and the pair \( (u,v) \) is a classical solution of \eqref{mainagain}. 
	\end{proof}
	
	\begin{theorem}\label{theotrivial2}(Corollary of \cite[Theorem 1.1]{Manouni-Perera}) 
		Let $N\geq 3$. Suppose that the pair \( (q,\tilde q) \) verify \eqref{criticalhyp} and consider \( p = q/(q-1) \) (then \( p_2^* = \tilde q \)). Let \( p < r < p_{2}^{*} \) and let \( (\lambda_m) \) be the sequence of eigenvalues of \eqref{maineigenvalueproblem} given in Theorem \ref{theoeigensequence}. Then problem~\eqref{mainagain} possesses \( m \) distinct pairs of non-trivial
		solutions
		\[
		\pm u_{1}^{\lambda},\; \dots,\; \pm u_{m}^{\lambda}
		\]
		whenever
		\begin{equation}\label{eq:lambda-cond}
			\lambda
			> 
			r\,|\Omega|^{\,\frac{r}{p}-1}\;
			\sup_{\tau > 0}
			\left[
			\frac{\lambda_{m}}{p\,\tau^{\,r-p}}
			- \frac{2\,S_{2,p}^{\,N/2p}}{N\,\tau^{\,r}}
			- \frac{\tau^{\,p_{2}^{*}-r}}
			{p_{2}^{*}\,|\Omega|^{\,2p/(N-2p)}}
			\right],
		\end{equation}  
		In particular, the number of solutions grows without bound as
		\( \lambda \to \infty \).
	\end{theorem}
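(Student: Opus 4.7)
The plan is to treat Theorem~\ref{theotrivial2} as a transfer result: apply the multiplicity theorem of \cite{Manouni-Perera} to the reduced fourth–order equation and then lift the solutions back to the Hamiltonian system via a regularity/inversion argument, exactly in the spirit of the proof of Theorem~\ref{theotrivial1}.

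First, I would perform the standard inversion step on the first equation of \eqref{mainagain}. Because $(q,\tilde q)$ lies on the critical hyperbola \eqref{criticalhyp} and $p=q/(q-1)$, a direct computation gives $\tilde q=\tfrac{Np}{N-2p}=p_2^{*}$. Hence the system is equivalent, in the variational sense, to the scalar $p$-biharmonic problem \eqref{mainlufu} with Navier boundary conditions, exponent $r$ in the range $p<r<p_2^{*}$, and the same parameter $\lambda$. The only difference between this reduced problem and the one considered in \cite[Theorem~1.1]{Manouni-Perera} is that the latter is posed under Dirichlet boundary conditions $u=|\nabla u|=0$; this is not a real obstacle, since the abstract variational argument there — based on the cohomological index and the pseudo-index construction of \cite{Perera1} — depends only on the structure of the functional and not on the specific boundary conditions. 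It suffices to replace the ambient space $W^{2,p}_{0}(\Omega)$ by $W:=W^{2,p}(\Omega)\cap W^{1,p}_{0}(\Omega)$, keeping the norm $\|u\|=\bigl(\int_{\Omega}|\Delta u|^{p}\bigr)^{1/p}$, and using the eigenvalue sequence from Theorem~\ref{theoeigensequence}.

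Second, I would verify that the parameter condition \eqref{eq:lambda-cond} is precisely the threshold obtained in \cite[Theorem~1.1]{Manouni-Perera} when one ensures that the minimax levels produced by the pseudo-index argument stay below the compactness threshold $\tfrac{2}{N}S_{2,p}^{N/2p}$. The supremum on $\tau>0$ comes from the optimization carried out along a suitable one-parameter family of test functions of the form $\tau\,u$ with $u$ taken from a symmetric set of index $m$ in $\mathcal{M}$; the resulting upper bound for the minimax value is a polynomial expression in $\tau$ with the three terms shown in \eqref{eq:lambda-cond}, and requiring this upper bound to be strictly smaller than $\tfrac{2}{N}S_{2,p}^{N/2p}$ yields exactly the stated inequality. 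An application of the cited theorem then produces $m$ distinct pairs $\pm u_{1}^{\lambda},\dots,\pm u_{m}^{\lambda}\in W$ of nontrivial weak solutions of \eqref{mainlufu}.

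Finally, I would transfer these to the system. Since $\Omega$ is smooth and the nonlinearities are power-type, standard elliptic regularity applied to \eqref{mainlufu} yields $u_{j}^{\lambda}\in C^{2,\gamma}(\overline\Omega)$. Setting $v_{j}^{\lambda}:=|\Delta u_{j}^{\lambda}|^{p-2}(-\Delta u_{j}^{\lambda})$ and invoking \cite[Lemma~3.2]{dosSantos} (see also \cite[Section~3]{Hulshoff-Vander}, \cite[Lemma~1]{dosSantos-Melo}, and \cite[Theorem~2.3]{doO-Macedo-Ribeiro}), one obtains $v_{j}^{\lambda}\in C^{2,\gamma}(\overline\Omega)$, and the pair $(u_{j}^{\lambda},v_{j}^{\lambda})$ is a classical solution of \eqref{mainagain}. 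Distinctness of the $m$ pairs for the system follows from distinctness of the $u_{j}^{\lambda}$ together with the bijective nature of the inversion $v\mapsto|v|^{q-2}v$. The last assertion on unboundedness of the number of solutions as $\lambda\to\infty$ is immediate from the fact that any finite index $m$ eventually satisfies \eqref{eq:lambda-cond} for $\lambda$ large enough, since the right-hand side grows linearly in $\lambda_{m}$ while $\lambda_{m}\to\infty$. The step I expect to require the most care — though the paper already signals it is routine — is the verification that the Navier-to-Dirichlet change of functional setting does not affect any of the compactness estimates and index computations in \cite{Manouni-Perera}; this ultimately reduces to the fact that the embedding $W\hookrightarrow\hookrightarrow L^{p}(\Omega)$ and the sharp constant $S_{2,p}$ from \eqref{S2p} are the same in both frameworks.
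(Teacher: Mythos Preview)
Your proposal is correct and follows essentially the same approach as the paper: reduce \eqref{mainagain} to the scalar $p$-biharmonic problem via inversion, apply \cite[Theorem~1.1]{Manouni-Perera} after noting that the switch from Dirichlet to Navier boundary conditions only amounts to replacing $W_0^{2,p}(\Omega)$ by $W^{2,p}(\Omega)\cap W_0^{1,p}(\Omega)$, and then lift the resulting solutions back to the system by the same regularity argument used for Theorem~\ref{theotrivial1}. Your write-up is in fact more detailed than the paper's, which simply refers back to the proof of Theorem~\ref{theotrivial1} and makes the boundary-condition remark.
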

	
	\begin{proof}
		We follow the same arguments used in the proof of Theorem \ref{theotrivial1}. The difference now is that the \( p \)-biharmonic problem corresponding to \eqref{mainagain} is given by \eqref{mainbiK}, and its analysis is based on \cite[Theorem 1.1]{Manouni-Perera}. It is worth noting that the original problem in that theorem is formulated with Dirichlet boundary conditions. However, the results still apply to the case of Navier boundary conditions, since the change only affects the choice of the function space: from \( W_0^{2,p}(\Omega) \) to \( W^{2,p}(\Omega) \cap W_0^{1,p}(\Omega) \).
	\end{proof}
	
	\begin{remark}
		Since \(p<r<p_{2}^{*}\) we have
		\[
		\frac{\lambda_{m}}{p\,\tau^{\,r-p}}
		-\frac{2\,S_{2,p}^{\,N/2p}}{N\,\tau^{\,r}}
		-\frac{\tau^{\,p_{2}^{*}-r}}
		{p_{2}^{*}\,|\Omega|^{\,2p/(N-2p)}}
		\;\longrightarrow\;-\infty
		\quad\text{as }\tau\to 0 \text{ or } \tau\to\infty,
		\]
		so the supremum in \eqref{eq:lambda-cond} is finite.  
		Because \(\lambda_{m}\to\infty\) as \(m\to\infty\) this supremum
		also diverges to \( +\infty \); hence it is positive for all
		sufficiently large \( m \).
	\end{remark}

	\subsection{Main results for the nonhomogeneous problems}

The last three main results of this paper deal with the nonhomogeneous cases and are stated below.

	\begin{theorem}[Non-resonant, nonhomogeneous case]\label{theononhomogeneousnonressonant}
Let $\Omega\subset\R^{N}$ be a smooth bounded domain with $N\ge 6$ and let
\[
  \frac{N}{N-2} \;<\; p \;\le\; \sqrt{\frac N2},
  \qquad
  p<r<p_{2}^{*}:=\frac{Np}{N-2p}.
\]
Assume that $\lambda>0$ is \emph{not} an eigenvalue of problem~\eqref{maineigenvalueproblem} (see Theorem~\ref{theoeigensequence}) and let $\mu\in\R$.  

\smallskip
Then there exists $\mu_{0}>0$ such that, for every
\[
   |\mu|+\|h\|_{(p_{2}^{*})'}<\mu_{0},
   \qquad
   h\in L^{(p_{2}^{*})'}(\Omega)\setminus\{0\},
\]
the problem~\eqref{mainbifinal} possesses two distinct non-trivial solutions
$u_{1},u_{2}\in W^{2,p}(\Omega)\cap W^{1,p}_{0}(\Omega)$ satisfying
\[
   E(u_{1})<E(u_{2})\quad\text{and}\quad
   0<E(u_{2})<\frac{2}{N}\,S_{2,p}^{\,N/2p},
\]
where $E$ is the variational functional defined in~\eqref{functionalE} and
$S_{2,p}$ is the optimal Sobolev constant from~\eqref{S2p}. If $p=\frac{N}{N-2}$ then, there exists $\lambda_0>0$ such that the same result holds for all $\lambda\geq \lambda_0$ which is not an eigenvalue. 
\end{theorem}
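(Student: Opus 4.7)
The plan is to verify the hypotheses of the abstract Theorem~\ref{thm:2.1} in the variational framework of Section~\ref{subsec:perturbation}. We take $W=W^{2,p}(\Omega)\cap W_0^{1,p}(\Omega)$ with the operators $A_p$, $B_p$ already checked there, the critical potential operator $f(u)=|u|^{p_2^{*}-2}u$ with $F(u)=\frac{1}{p_2^{*}}\int_\Omega|u|^{p_2^{*}}\,dx$, and the subcritical perturbation $g(u)=|u|^{r-2}u$ with $G(u)=\frac{1}{r}\int_\Omega|u|^{r}\,dx$. Conditions (F1)--(F3) and (G) follow at once from the continuous embeddings $W\hookrightarrow L^{p_2^{*}}(\Omega)$ and $W\hookrightarrow L^{r}(\Omega)$ together with $p<r<p_2^{*}$.

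The first concrete step is to identify the Palais--Smale threshold. A second concentration--compactness argument applied to the critical embedding $D^{2,p}(\R^N)\hookrightarrow L^{p_2^{*}}(\R^N)$ shows that every $(PS)_c$ sequence for $E$ either converges strongly or concentrates at a point, and each concentration point contributes at least $\frac{2}{N}S_{2,p}^{N/(2p)}$ to the energy. Combined with the continuity of the lower-order pieces $\mu G(u)+\langle h,u\rangle$, this gives
\[
  c^{*}_{\mu,h}\;\ge\;\frac{2}{N}\,S_{2,p}^{N/(2p)}+o(1)\qquad\text{as}\ \mu,\|h\|_{*}\to 0,
\]
and hence $c^{*}=\frac{2}{N}S_{2,p}^{N/(2p)}$.

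Since $\lambda$ is not an eigenvalue, fix $k\ge 0$ with $\lambda_k<\lambda<\lambda_{k+1}$ (with the convention $\lambda_0:=0$). For $\delta>0$ small, $\lambda+\delta<\lambda_{k+1}$, so by~\eqref{eq:index} and Theorem~\ref{theoeigensequence} the sublevel $\Psi^{\lambda+\delta}$ contains a compact symmetric subset $C_\delta$ of cohomological index $k$; moreover $\|u\|_p^{p}\ge p/(\lambda+\delta)$ for every $u\in C_\delta$. As the second endpoint $w_\delta\in\mathcal{M}$ I would take the radial projection of a truncation of a concentrating Aubin--Talenti extremal $U_\varepsilon$ for $S_{2,p}$, supported near an interior point of $\Omega$, with $\varepsilon$ small enough that $w_\delta\notin C_\delta$. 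The set $A_\delta$ is then the segment cone of Theorem~\ref{thm:2.1}. Inequality~\eqref{eq:2.8} follows by choosing $R$ large: on every ray $t\mapsto E_0(tu)$ with $u\in A_\delta$ the term $-\frac{t^{p_2^{*}}}{p_2^{*}}\|u\|_{p_2^{*}}^{p_2^{*}}$ eventually dominates, uniformly because $A_\delta$ stays in a bounded region of $\mathcal{M}$.

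The principal obstacle is the strict inequality~\eqref{eq:2.9}. On the eigenvalue side, the bound $\|u\|_p^p\ge p/(\lambda+\delta)$ forces
\(
  1-\tfrac{\lambda}{p}\|u\|_p^p\le \delta/(\lambda+\delta),
\)
so $\sup_{t\ge 0}E_0(tu)\to 0$ as $\delta\to 0$ uniformly on $C_\delta$. On the concentration side one has to prove
\[
  \sup_{t\ge 0} E_0(tw_\delta)
  \;\le\;\frac{2}{N}\,S_{2,p}^{N/(2p)}\,-\,\frac{\lambda}{p}\,\|U_\varepsilon\|_p^{p}\,+\,(\text{truncation error}),
\]
after substituting the sharp expansions of $\|\Delta U_\varepsilon\|_p^p$ and $\|U_\varepsilon\|_{p_2^{*}}^{p_2^{*}}$. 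The hypothesis $p\le\sqrt{N/2}$, equivalently $2p^{2}\le N$, is precisely what makes $\|U_\varepsilon\|_p^{p}$ dominate the remainder and deliver a strictly negative correction; in the borderline $p=N/(N-2)$ the $L^{p}$--integral acquires an extra logarithmic factor and one needs $\lambda\ge\lambda_0$ to absorb the error, accounting for the restriction stated. A final interpolation along the projected segments making up $A_\delta$ extends the strict inequality to the whole set. Theorem~\ref{thm:2.1} then yields two distinct critical points $u_1,u_2$ with the stated energy ordering, and because $h\ne 0$ the trivial function is not a critical point, so both $u_1$ and $u_2$ are nontrivial.
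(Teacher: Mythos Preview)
Your high-level strategy matches the paper's, but two concrete points are not handled and would stop the argument from going through.

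\medskip

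\textbf{The interpolation step is not a step.} You place a truncated extremal $w_\delta$ alongside a generic compact set $C_\delta\subset\Psi^{\lambda+\delta}$ and then write that ``a final interpolation along the projected segments extends the strict inequality to the whole set''. For the $p$-biharmonic energy this does not follow: $|\Delta((1-\tau)v+\tau w)|^p$ produces cross terms that are neither small nor of a definite sign when $p\neq 2$. The paper resolves this by \emph{engineering disjoint supports}: it first upgrades $C_0\subset\Psi^{\lambda_k}$ to a set that is bounded in $L^\infty\cap C^2_{\mathrm{loc}}$ (Proposition~\ref{adapt_perera_lemma}), then multiplies each $u\in C_0$ by a cut-off that vanishes on $B_{3\rho/4}(0)$ to get $C(\rho)$, and finally supports $w_{\varepsilon,\rho}$ inside $B_{\rho/2}(0)$. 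With this, $E_0(tv+\tau w_{\varepsilon,\rho})=E_0(tv)+E_0(\tau w_{\varepsilon,\rho})$ exactly, and the estimate on $A_\delta$ reduces to two separate one-variable maximizations (Lemmas~\ref{lemmageometry1}, \ref{lemmasupofcdelta}, \ref{lemmaestimatewepsilonrhofinal}). Without this construction your proof has a genuine gap at the decisive inequality~\eqref{eq:2.9}. Note too that showing $i(C(\rho))=k$ after the hole-punching, and controlling how far $C(\rho)$ drifts outside $\Psi^{\lambda_k}$ (Lemma~\ref{lemmadocro}), are part of the work.

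\medskip

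\textbf{The borderline $p=N/(N-2)$ is misdiagnosed.} The logarithmic factor in $\|U_\varepsilon\|_p^p$ occurs at $p=\sqrt{N/2}$, not at $p=N/(N-2)$. The lower bound $p>N/(N-2)$ comes from the \emph{truncation error} in $\|\Delta w_{\varepsilon,\rho}\|_p^p$: for $p<\tfrac{2N-2}{N}$ the extremal $\phi$ decays like $|x|^{2-N}$ (Hulshof--van der Vorst), and the error is of order $(\rho^{-1}\varepsilon)^{N(p-1)}$ (Lemma~\ref{estimatesfortalentifunctions}). Comparing with $\lambda\,\varepsilon^{2p}$ from $\|w_{\varepsilon,\rho}\|_p^p$, one needs $N(p-1)>2p$, i.e.\ $p>N/(N-2)$; at equality the two powers coincide and only a large $\lambda$ forces the sign. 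These asymptotics for the $p$-biharmonic extremals are not the classical Aubin--Talenti profiles and the resulting error analysis (splitting at the thresholds $A(N)=\tfrac{2N-2}{N}$ and $B(N)$) is the technical core of the paper; you should not treat the ``sharp expansions'' as available for free.
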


   \begin{theorem}[Resonant, nonhomogeneous case]\label{theoresonant_nonhom}
Let $\Omega\subset\R^{N}$ be a smooth bounded domain and assume
\(N\ge 8.\) Assume that
\[
   p_{0}(N) \;<\; p \;<\; \frac{N}{\sqrt{2(N+2)}} ,
   \qquad
   p<r<p_{2}^{*}:=\frac{Np}{N-2p}.
\]
where $p_0(N)$ is defined in \eqref{defofp0}.  
\smallskip
Let $\lambda=\lambda_{k}$ be an eigenvalue of problem~\eqref{maineigenvalueproblem} given in Theorem~\ref{theoeigensequence} and let $\mu\in\R$.  
Then there exists $\mu_{0}>0$ such that, for every
\[
   |\mu|+\|h\|_{(p_{2}^{*})'}<\mu_{0},
   \qquad
   h\in L^{(p_{2}^{*})'}(\Omega)\setminus\{0\},
\]
the problem~\eqref{mainbifinal} admits two distinct non-trivial solutions
$u_{1},u_{2}\in W^{2,p}(\Omega)\cap W^{1,p}_{0}(\Omega)$ satisfying
\[
   E(u_{1})<E(u_{2})\quad\text{and}\quad
   0<E(u_{2})<\frac{2}{N}\,S_{2,p}^{\,N/2p},
\]
where $E$ is given in~\eqref{functionalE} and $S_{2,p}$ in~\eqref{S2p}.
\end{theorem}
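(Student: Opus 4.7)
The plan is to apply the abstract critical-point Theorem~\ref{thm:2.1} in the variational framework set up in Section~\ref{subsec:perturbation}: take $W=W^{2,p}(\Omega)\cap W^{1,p}_{0}(\Omega)$, let $A_{p}$ be the $p$-biharmonic operator, $B_{p}u=|u|^{p-2}u$, $f(u)=|u|^{p_{2}^{*}-2}u$ and $g(u)=|u|^{r-2}u$. Hypotheses (A1)--(G) follow from Hölder's inequality and the Sobolev embeddings $W\hookrightarrow L^{p_{2}^{*}}(\Omega)$ and $W\hookrightarrow\hookrightarrow L^{r}(\Omega)$ (the latter because $p<r<p_{2}^{*}$), and the critical points of the induced functional $E$ are exactly the weak solutions of~\eqref{mainbifinal}. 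The overall strategy mirrors the one used for Theorem~\ref{newhom2}; the only substantive difference is that now $h\neq 0$ is allowed, so Theorem~\ref{thm:2.1} is invoked in its original formulation from \cite{Perera4} and directly delivers \emph{two} non-trivial solutions $u_{1},u_{2}$.

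I would first pin down the compactness threshold. A second-concentration-compactness argument adapted to the $p$-biharmonic setting (in the spirit of \cite{Perera4} for $p$-Laplacians and \cite{Guedda} for the biharmonic case) shows that $E$ satisfies $(\mathrm{PS})_{c}$ for every $c<c^{*}_{\mu,h}$, with
\[
  c^{*}_{\mu,h}\;\longrightarrow\;c^{*}\;:=\;\tfrac{2}{N}\,S_{2,p}^{\,N/(2p)}
  \quad\text{as }|\mu|+\|h\|_{(p_{2}^{*})'}\to 0,
\]
so the abstract quantity $c^{*}$ from Section~\ref{subsec:perturbation} is precisely the Aubin--Talenti constant for the $p$-biharmonic operator. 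Since $\lambda=\lambda_{k}$ and $\lambda_{k}<\lambda_{k+1}$ is implicit in the resonant hypothesis, Theorem~\ref{theoeigensequence} supplies a compact symmetric subset $C\subset\Psi^{\lambda_{k}}$ with $i(C)=k$; I would set $C_{\delta}:=C$ for every small $\delta>0$. For $w_{\delta}$ I would take the classical truncated Aubin--Talenti instanton: let $U$ be a radial minimizer of $S_{2,p}$, set $U_{\varepsilon}(x)=\varepsilon^{(2p-N)/p}U(x/\varepsilon)$, multiply by a cutoff supported in a small ball inside $\Omega$, and radially project onto $\mathcal{M}$; the concentration scale $\varepsilon=\varepsilon(\delta)$ is tuned below.

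Condition~\eqref{eq:2.8} is routine: along $A_{\delta}$ the norm $\|u\|_{p_{2}^{*}}$ is bounded away from zero by the compactness of $A_{\delta}$, so $E_{0}(Ru)\to-\infty$ uniformly as $R\to\infty$. The genuine difficulty is~\eqref{eq:2.9}. Along $v\in C$ the resonance $\Psi(v)\le\lambda_{k}=\lambda$ forces $I_{p}(v)-\lambda J_{p}(v)\le 0$, hence
\[
  E_{0}(tv)\;\le\;-\tfrac{t^{p_{2}^{*}}}{p_{2}^{*}}\,\|v\|_{p_{2}^{*}}^{p_{2}^{*}}\;\le\;0 .
\]
Along $w_{\delta}$ one gets the sharp expansion
\[
  \sup_{t\ge 0}E_{0}(tw_{\delta})\;=\;c^{*}\;-\;C_{1}\,\lambda\,\varepsilon^{\beta(N,p)}\;+\;o\!\bigl(\varepsilon^{\beta(N,p)}\bigr),
\]
where the exponent $\beta(N,p)$ and the sign of the first correction are governed by the polynomial $G_{N}(p)$ appearing in~\eqref{defofp0}; the constraints $N\ge 8$ and $p_{0}(N)<p<N/\sqrt{2(N+2)}$ are exactly what ensures that the resonant linear correction $\lambda J_{p}(w_{\varepsilon})$ dominates the subcritical residue in $\|w_{\varepsilon}\|_{p_{2}^{*}}^{p_{2}^{*}}-S_{2,p}^{N/(2p)}$. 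Finally, for the convex combinations $\pi_{\mathcal{M}}\bigl((1-\tau)v+\tau w_{\delta}\bigr)$, the concentration of $w_{\delta}$ at a single point together with the compactness of $C$ makes all cross terms absorb into $o(\varepsilon^{\beta})$, so the supremum on $A_{\delta}$ is controlled by its values at the two extreme generators. This Aubin--Talenti estimate is the main obstacle of the whole argument; once it yields a strict inequality for some admissible $\delta$, Theorem~\ref{thm:2.1} produces the two non-trivial critical points with the announced energy bound, and the threshold $|\mu|+\|h\|_{(p_{2}^{*})'}<\mu_{0}$ fixes $\delta$ small enough so the $o(\varepsilon^{\beta})$ perturbations introduced by $\mu g$ and $h$ do not destroy the strict inequality.
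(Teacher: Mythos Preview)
Your overall plan (apply Theorem~\ref{thm:2.1} with the $p$-biharmonic framework, use Lemma~\ref{lem:PSbiharmonic} for $(\mathrm{PS})_c$, and build $w_\delta$ from a truncated Aubin--Talenti profile) matches the paper. The gap is in your handling of the set $A_\delta$. You take $C_\delta=C\subset\Psi^{\lambda_k}$ fixed and then assert that for mixed elements $\pi_{\mathcal{M}}((1-\tau)v+\tau w_\delta)$ ``all cross terms absorb into $o(\varepsilon^\beta)$'' by concentration. This is not justified: the supports of $v\in C$ and $w_\delta$ overlap, so $\|\Delta((1-\tau)v+\tau w_\delta)\|_p^p$ is \emph{not} the sum of the two pieces, and the cross integrals (e.g.\ $\int|\Delta v|^{p-1}|\Delta w_\varepsilon|$) are $O(1)$ by H\"older unless you first upgrade the regularity of every $v\in C$ and then carry out delicate pointwise estimates near the concentration point. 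You do neither.

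The paper avoids this completely by a different construction: it first replaces $C$ by a regular set $C_0\subset L^\infty\cap C^2_{\mathrm{loc}}$ (Proposition~\ref{adapt_perera_lemma}) and then \emph{punches a hole} of radius $\rho$ in every $u\in C_0$ to form $C(\rho)$ (Lemma~\ref{lemmadocro}), so that $v\in C(\rho)$ and $w_{\varepsilon,\rho}$ have disjoint supports and the energy splits exactly (Lemma~\ref{lemmageometry1} and \eqref{supwithoutpiMfinal2}). The price is that $C(\rho)\subset\Psi^{\lambda_k+c_1\rho^{N-2p}}$ only, so in the resonant case $\sup_{t\ge0}E_0(tv)$ is no longer $\le 0$ but $\le c\rho^{N(N-2p)/2p}$ (Lemma~\ref{lemmasupofcdelta}). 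One must then couple $\rho=\varepsilon^\alpha$ and find $\alpha\in(0,1)$ for which both this hole-error and the Aubin--Talenti cutoff error $(\rho^{-1}\varepsilon)^{\cdots}$ are dominated by the gain $-\lambda\varepsilon^{2p}$. The existence of such an $\alpha$ is \emph{exactly} what forces $N\ge 8$ and $p_0(N)<p<N/\sqrt{2(N+2)}$; in particular, $G_N(p)<0$ arises from the inequality $\alpha N(N-2p)/2p>2p$ combined with $(1-\alpha)N(p-1)>2p$, not from ``the linear correction dominating the subcritical residue'' as you suggest. If your cross-term claim were valid, the resonant case would need only the weaker non-resonant restriction $N/(N-2)<p\le\sqrt{N/2}$, contradicting the stated hypotheses.
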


	As a consequence, we have the following equivalent result for the Hamiltonian system \eqref{mainfinal}.

\begin{corollary}\label{cor:nonhomogeneous}
Let $\Omega\subset\mathbb{R}^{N}$ be smooth and bounded.
Assume the pair $(q,\tilde q)$ satisfies~\eqref{criticalhyp},
set $p:=\dfrac{q}{q-1}$ and choose any $r$ with $p<r<\tilde q$.
Fix $h\in C^{1}(\overline{\Omega})$, $\mu\in\mathbb{R}$ and $\lambda>0$.
    	\begin{enumerate}[label=\textup{(\roman*)}]
\item \emph{Non-resonant case.}
      Suppose $\lambda$ is \emph{not} one of the eigenvalues listed in
      Theorem~\ref{theoeigensequence}.
      If $N\ge 6$ and
      \[
        \frac{N+\sqrt{2N}}{N-2}\;\le q\;<\;\frac{N}{2},
      \]
      then there exists $\mu_{0}>0$ such that, whenever
      \[
        |\mu|+\|h\|_{(\tilde q)'}<\mu_{0},
      \]
      problem~\eqref{mainfinal} possesses two distinct, classical,
      non-trivial solutions.
      In the borderline case $q=N/2$ the same conclusion holds whenever
      $\lambda\ge\lambda_{0}$ for some $\lambda_{0}>0$.

\item \emph{Resonant case.}
      Assume $\lambda=\lambda_{k}$ for some eigenvalue $\lambda_{k}$
      of Theorem~\ref{theoeigensequence}.
      If $N\ge 8$ and
      \[
        \frac{N}{\,N-\sqrt{2(N+2)}\,}
        \;<\; q \;<\;
        \frac{p_{0}(N)}{p_{0}(N)-1},
      \]
      where $p_{0}(N)$ is  given in \eqref{defofp0},
      then the same  condition on $(\mu,h)$ as above guarantees
      two classical non-trivial solutions to~\eqref{mainfinal}.
\end{enumerate}
\end{corollary}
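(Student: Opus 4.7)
The plan is to follow the same three-step scheme used in Theorems \ref{theotrivial1} and \ref{theotrivial2}: first I translate the hypotheses on $q$ into the hypotheses on $p$ required by Theorems \ref{theononhomogeneousnonressonant} and \ref{theoresonant_nonhom}; then I apply one of those theorems to the associated $p$-biharmonic problem \eqref{mainbifinal}; finally I promote the resulting weak solutions to classical solutions of the Hamiltonian system \eqref{mainfinal} via elliptic regularity.

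The first step is a direct computation, using the fact that $q\mapsto q/(q-1)$ is a decreasing involution on $(1,\infty)$. In the non-resonant case, $q=N/2$ corresponds to the borderline $p=N/(N-2)$, and $q=(N+\sqrt{2N})/(N-2)$ simplifies, after factoring $\sqrt N(\sqrt N+\sqrt 2)$ and $\sqrt 2(\sqrt N+\sqrt 2)$ in the numerator and denominator of $q/(q-1)$, to $p=\sqrt{N/2}$. In the resonant case the endpoints $q=N/(N-\sqrt{2(N+2)})$ and $q=p_{0}(N)/(p_{0}(N)-1)$ translate, respectively, to $p=N/\sqrt{2(N+2)}$ and $p=p_{0}(N)$. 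Since $\Omega$ is bounded and $h\in C^{1}(\overline{\Omega})$, we have $h\in L^{(p_{2}^{*})'}(\Omega)$ and $\|h\|_{(\tilde q)'}=\|h\|_{(p_{2}^{*})'}$, so the smallness condition $|\mu|+\|h\|_{(\tilde q)'}<\mu_{0}$ matches the one in the theorems. Once these correspondences are in place, Theorem \ref{theononhomogeneousnonressonant} in case (i), or Theorem \ref{theoresonant_nonhom} in case (ii), will yield two distinct non-trivial weak solutions $u_{1},u_{2}\in W^{2,p}(\Omega)\cap W_{0}^{1,p}(\Omega)$ of \eqref{mainbifinal}.

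The hard part will be the passage from the $p$-biharmonic weak solutions to classical solutions of the system. A standard bootstrap argument, combined with the smoothness of $\Omega$ and of $h$, should yield $u_{i}\in C^{2,\gamma}(\overline{\Omega})$ for some $\gamma\in(0,1)$. I then plan to set $v_{i}:=|\Delta u_{i}|^{p-2}(-\Delta u_{i})$ and to invoke \cite[Lemma 3.2]{dosSantos} (together with \cite[Section 3]{Hulshoff-Vander}, \cite[Lemma 1]{dosSantos-Melo} and \cite[Theorem 2.3]{doO-Macedo-Ribeiro}) to conclude $v_{i}\in C^{2,\gamma}(\overline{\Omega})$ and that the pair $(u_{i},v_{i})$ is a classical solution of \eqref{mainfinal}. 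The main obstacle in this step, handled by those references, is the regularity of $v_{i}$ at the nodal set of $\Delta u_{i}$, where the map $t\mapsto|t|^{p-2}t$ loses smoothness when $p<2$.
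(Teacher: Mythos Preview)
Your proposal is correct and follows essentially the same approach as the paper: translate the $q$-range into the $p$-range required by Theorems~\ref{theononhomogeneousnonressonant} and~\ref{theoresonant_nonhom}, apply the relevant theorem to obtain two nontrivial weak solutions of~\eqref{mainbifinal}, and then upgrade to classical solutions of the system via the regularity results in \cite{dosSantos,Hulshoff-Vander,dosSantos-Melo,doO-Macedo-Ribeiro}. Your write-up is in fact more explicit than the paper's, which simply refers back to the proof of Corollary~\ref{corollaryhom} and notes that $h\in C^{1}(\overline{\Omega})$ is needed for the regularity step.
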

\begin{proof}
   The proof follows exaclty the same of Corollary \ref{corollaryhom} using \eqref{mainfinal} and \eqref{mainbifinal} instead of \eqref{mainhomfinal} and \eqref{mainhombifinal}. Notice that here we also need  $h\in C^{1}(\overline{\Omega})$.
\end{proof}

\subsection{Some remarks}

Let us finish this section with some important observations.

\begin{remark}
  The upper bounds on \(p\) used in Theorems~\ref{newhom1}, \ref{newhom2}, \ref{theononhomogeneousnonressonant}  and \ref{theoresonant_nonhom} can be written as
  \[
      p\;\le\;\sqrt{\frac{N}{2}}
      \;\;\Longleftrightarrow\;\;
      N\;\ge\;2p^{2}
      \qquad\text{and}\qquad
      p\;<\;\frac{N}{\sqrt{2(N+2)}}
      \;\;\Longleftrightarrow\;\;
      N(N+2p^{2})\;>\;4p^{2}.
  \] These conditions play the same technical role and are analogous to the growth restrictions that appear in the corresponding results for the \(p\)-Laplacian and the fractional \(p\)-Laplacian in
  \cite[Theorems 2.2 and 2.7]{Perera4}. By contrast, the \emph{lower} bounds on \(p\) are more delicate: they come from the
  asymptotic behaviour of the Aubin--Talenti concentration profiles when \(p\) is small.
\end{remark}

\begin{remark}
In the non-resonant case of both Corollary \ref{corollaryhom} and ~\ref{cor:nonhomogeneous}, the
admissible range for $(q,N)$ coincides with the \emph{non-critical region
on the critical hyperbola} introduced in~\cite{dosSantos-Melo}.  
In that work the authors obtained existence results for the \emph{homogeneous}
problem on the same range.  Outside this strip they required different
assumptions on the nonlinear terms and exponents.
\end{remark}

	%-----------------------------------------------------------------
	%  Palais–Smale threshold lemma for the p–biharmonic functional
	%-----------------------------------------------------------------
	\section{Proof of the main theorems}
	
	The proofs of the existence of a nontrivial solution for Problem \eqref{mainhombifinal} and two solutions for Problem \eqref{mainbifinal}  are carried out in several steps. We divide the reasoning in subsections: the first one concerns the compactness property for the associated functional and its threshold (Palais-Smale condition). Then we proceed to check some of the geometric properties that we need in order to apply the abstract theorem \ref{thm:2.1}. The third subsection is devoted to the construction of the concentration functions that allow the \textit{minimax} level to stay below the threshold. Then, we finish the proof of the main theorems in the last subsection.
	
	\subsection{On the Palais-Smale condition}
	
	The result below gives conditions for the functional associated with either problem \eqref{mainhombifinal} or \eqref{mainbifinal} to satisfy the \( (PS)_c \) condition. 
	
	\begin{lemma}\label{lem:PSbiharmonic}
		Let \( 1 < p < N/2 \), \( p_{2}^{*} := \frac{Np}{N - 2p} \), and \( p < r < p_{2}^{*} \). For parameters \( \lambda > 0 \), \( \mu \in \mathbb{R} \) and
		\( h \in L^{(p_{2}^{*})'}(\Omega) \), consider the functional $E$ defined in \eqref{functionalE}. Then there exists \( \kappa = \kappa(N, p, r, \Omega) > 0 \) such that \( E \)
		satisfies the Palais–Smale condition \( (\mathrm{PS})_{c} \) at every level
		\[
		c < \frac{2}{N} S_{2,p}^{N/2p}
		- \kappa \left(
		|\mu|^{(p_{2}^{*}/r)'}
		+ \|h\|_{(p_{2}^{*})'}^{(p_{2}^{*})'}
		\right),
		\]
		where \( S_{2,p} \) is defined in \eqref{S2p} and \( (p_{2}^{*}/r)' := \frac{p_{2}^{*}}{p_{2}^{*} - r} \).
	\end{lemma}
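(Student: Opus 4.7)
The plan is to follow the concentration-compactness scheme of Brezis--Nirenberg, adapted to the fourth-order $p$-biharmonic setting with a subcritical perturbation and a forcing term. Let $(u_n)\subset W$ be a $(\mathrm{PS})_c$-sequence. My first step is to show that $(u_n)$ is bounded in $W$: forming the combination $E(u_n)-\tfrac{1}{r}E'(u_n)u_n$ eliminates the $r$-term and leaves on the right-hand side a positive multiple of $\|u_n\|^p$ together with a positive multiple of $\|u_n\|_{p_2^*}^{p_2^*}$; the $\lambda$-, $\mu$-, and $h$-pieces are then absorbed by Hölder and Young's inequalities into the critical-$L^{p_2^*}$ term, while the left-hand side equals $c+o(1)+o(\|u_n\|)$. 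Since $p>1$, this forces $\|u_n\|$ to stay bounded.

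Passing to a subsequence yields $u_n\rightharpoonup u$ weakly in $W$, $u_n\to u$ strongly in $L^p(\Omega)$ and $L^r(\Omega)$ (both embeddings being compact as $r<p_2^*$), and $u_n\to u$ a.e.\ in $\Omega$. Standard Nemytskii continuity then gives $E'(u)=0$. Writing $v_n:=u_n-u$, the usual monotonicity argument for $\Delta_p^2$ (testing $E'(u_n)-E'(u)$ against $u_n-u$ and exploiting the strict convexity of $t\mapsto|t|^p$) produces $\Delta u_n\to\Delta u$ a.e., which enables the Brezis--Lieb lemma for both $|\Delta u_n|^p$ and $|u_n|^{p_2^*}$:
\[
   \|\Delta u_n\|_p^p = \|\Delta u\|_p^p + \|\Delta v_n\|_p^p + o(1), \qquad
   \|u_n\|_{p_2^*}^{p_2^*} = \|u\|_{p_2^*}^{p_2^*} + \|v_n\|_{p_2^*}^{p_2^*} + o(1).
\]
Subtracting $E'(u)u=0$ from $E'(u_n)u_n\to 0$, and using the strong $L^p$, $L^r$ convergence together with $\int h v_n\to 0$, the subcritical and linear terms vanish and one obtains $\|\Delta v_n\|_p^p - \|v_n\|_{p_2^*}^{p_2^*}\to 0$. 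Calling this common limit $b$, the Sobolev inequality $\|\Delta v_n\|_p^p\ge S_{2,p}\|v_n\|_{p_2^*}^p$ forces the dichotomy $b=0$ (in which case strong convergence $u_n\to u$ in $W$ is immediate) or $b\ge S_{2,p}^{N/2p}$.

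Simultaneously, the energy splits as
\[
    c \;=\; E(u) + \bigl(\tfrac{1}{p}-\tfrac{1}{p_2^*}\bigr)\,b + o(1) \;=\; E(u) + \tfrac{2}{N}\,b + o(1),
\]
so excluding the bad alternative reduces to producing a sharp lower bound for $E(u)$. Using $E'(u)u=0$, the standard manipulation $E(u)=E(u)-\tfrac{1}{p}E'(u)u$ collapses to
\[
   E(u) \;=\; \tfrac{2}{N}\|u\|_{p_2^*}^{p_2^*} + \mu\bigl(\tfrac{1}{p}-\tfrac{1}{r}\bigr)\!\int_\Omega |u|^r\,dx - \tfrac{p-1}{p}\!\int_\Omega h\,u\,dx.
\]

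This last estimate is the main technical point of the argument. I would bound the $\mu$-term by Hölder ($L^r\hookrightarrow L^{p_2^*}$) followed by Young with the conjugate pair $(p_2^*/r,(p_2^*/r)')$, and the $h$-term by Hölder and Young with $(p_2^*,(p_2^*)')$, choosing the Young parameters small enough that the two error contributions can be absorbed in an $\varepsilon$-fraction of $\tfrac{2}{N}\|u\|_{p_2^*}^{p_2^*}$. The residual data dependence collects into
\[
    E(u) \;\ge\; -\kappa\bigl(|\mu|^{(p_2^*/r)'} + \|h\|_{(p_2^*)'}^{(p_2^*)'}\bigr)
\]
for some constant $\kappa=\kappa(N,p,r,\Omega)>0$. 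Plugged into the level identity, this contradicts the hypothesis on $c$ whenever $b\ge S_{2,p}^{N/2p}$, so necessarily $b=0$ and $u_n\to u$ strongly in $W$. I expect the principal obstacles to be the a.e.\ convergence of $\Delta u_n$ needed to justify the Brezis--Lieb step, and the careful bookkeeping in the Young-inequality estimate required to collect everything into a single universal constant $\kappa$ with exactly the exponents $(p_2^*/r)'$ and $(p_2^*)'$.
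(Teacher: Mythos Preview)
Your overall strategy is the same as the paper's: boundedness of the PS sequence, weak convergence, Brezis--Lieb splitting of the $W$-norm and the $L^{p_2^*}$-norm, the Sobolev dichotomy $b=0$ or $b\ge S_{2,p}^{N/2p}$, and a lower bound for $E(u)$ via H\"older and Young with exactly the exponents $(p_2^*/r)'$ and $(p_2^*)'$. Your boundedness argument (using the multiplier $1/r$ rather than the paper's $1/p$) is a harmless and arguably cleaner variant.

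There is, however, a genuine gap in the step you single out yourself. The monotonicity test you describe---pairing $E'(u_n)-E'(u)$ with $u_n-u$---does \emph{not} yield $\Delta u_n\to\Delta u$ a.e., because the critical term does not vanish: one computes
\[
\int_\Omega\bigl(|u_n|^{p_2^*-2}u_n-|u|^{p_2^*-2}u\bigr)(u_n-u)\,dx\;\longrightarrow\;\lim_n\|u_n-u\|_{p_2^*}^{p_2^*},
\]
which is precisely the concentration mass you are trying to rule out. Hence the monotone quantity $\int(|\Delta u_n|^{p-2}\Delta u_n-|\Delta u|^{p-2}\Delta u)(\Delta u_n-\Delta u)$ is not shown to tend to zero, and neither the a.e.\ convergence of $\Delta u_n$ nor the identity $E'(u)=0$ follows from this computation. (The latter also needs $|\Delta u_n|^{p-2}\Delta u_n\rightharpoonup|\Delta u|^{p-2}\Delta u$ in $L^{p'}$, which is not a consequence of mere weak $L^p$-convergence of $\Delta u_n$.)

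The paper does not supply this step either; it simply invokes Brezis--Lieb for $\|\Delta u_n\|_p^p$ and cites \cite[Proposition~3.1]{dosSantos1} for the fact that the weak limit is a critical point, where the required a.e.\ convergence of $\Delta u_n$ is established by more delicate means (a Boccardo--Murat type argument adapted to the fourth-order setting). To close your argument you should either cite such a result directly, or replace the naive test function $u_n-u$ by a suitable truncation/localisation that neutralises the critical term before applying monotonicity.
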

	
	\begin{proof}
		We follow \cite[Lemma 3.2]{Perera4}, adapting it to the
		biharmonic setting.
		
		The boundedness of a \( PS \) sequence is proved with standard, well-known arguments: let \( (u_n) \) be such that \( E(u_n) = c + o(1) \) and \( E^\prime(u_n)v = o(\|v\|) \). Then, taking \( u_n \) as a test function in the second equation, multiplying it by \( 1/p \), and subtracting from the energy identity, one has
		\[
		\mu\left(\frac{1}{p} - \frac{1}{r}\right)\|u_n\|_r^r + \left(\frac{1}{p} - \frac{1}{p_2^*}\right)\|u_n\|_{p_2^*}^{p_2^*} = c + o(1) + o(\|u_n\|) + \frac{p - 1}{p} \int_\Omega h u_n\,dx.
		\]
		If \( \mu \leq 0 \), we simply move the term \( \mu\left(1/p - 1/r\right)\|u_n\|_r^r \) to the right-hand side of the equation above and carry it within the estimates. So, suppose \( \mu > 0 \). Then, since Sobolev embeddings give us:
		\[
		\|u_n\|_r^r + \|u_n\|_{p_2^*}^{p_2^*} \leq C_1 + C_2 \|u_n\|,
		\]
		we can now estimate, using Hölder's inequality,
		\[
		\|u_n\|_p^p \leq |\Omega|^{1 - \frac{p}{p_2^*}} \|u_n\|_{p_2^*}^p \leq |\Omega|^{1 - \frac{p}{p_2^*}} (C_1 + C_2 \|u_n\|)^{\frac{p}{p_2^*}}.
		\]
		These last two inequalities can be inserted into \( E(u_n) = c + o(1) \), and we obtain
		\[
		\frac{1}{p} \|u_n\|^p \leq C_3 + C_4 \|u_n\| + \frac{\lambda}{p} |\Omega|^{1 - \frac{p}{p_2^*}} (C_1 + C_2 \|u_n\|)^{\frac{p}{p_2^*}},
		\]
		which gives the boundedness of \( (u_n) \). So, given any \( (PS)_c \) sequence \( (u_n) \), we have that,
		up to a subsequence, \( u_n \rightharpoonup u \) in \( W \) and strongly in \( L^\gamma(\Omega) \) for all \( 1 \leq \gamma < p_2^* \), for some \( u \in W \). Then, since \( E^\prime(u_n)u_n = o(\|u_n\|) \), we have
		\begin{equation}\label{tobl1}
			\|u_n\|^p = \|u_n\|_{p_2^*}^{p_2^*} + \lambda \|u_n\|_p^p + \mu \|u_n\|_r^r + \int_\Omega h\,u_n\,dx + o(1).
		\end{equation}
		It is also known that the weak limits of \( (PS)_c \) sequences are always critical points of functionals such as \( E \) (see, for instance, \cite[Proposition 3.1]{dosSantos1}). That means
		\begin{equation}\label{tobl2}
			\|u\|^p = \|u\|_{p_2^*}^{p_2^*} + \lambda \|u\|_p^p + \mu \|u\|_r^r + \int_\Omega h\,u\,dx.
		\end{equation}
		
		Set \( \tilde u_n := u_n - u \). The Brézis–Lieb relation (\cite[Theorem 1]{Brezis-Lieb}) then yields
		\begin{align*}
			\|u_n\|^p &= \|u\|^p + \|\tilde u_n\|^p + o(1), \\
			\|u_n\|_{p_2^*}^{p_2^*} &= \|u\|_{p_2^*}^{p_2^*} + \|\tilde u_n\|_{p_2^*}^{p_2^*} + o(1).
		\end{align*}
		Subtracting \eqref{tobl2} from \eqref{tobl1}, using these relations and the Sobolev embedding, we have
		\begin{equation}\label{downbelow}
			\|\tilde u_n\|^p = \|\tilde u_n\|_{p_2^*}^{p_2^*} + o(1).
		\end{equation}
		Applying the Sobolev inequality, we get
		\begin{equation} \label{eq:threshold_step}
			\|\tilde u_n\|^p \le \frac{1}{S_{2,p}^{p_2^*/p}} \|\tilde u_n\|^{p_2^*} + o(1).
		\end{equation}

		Now we estimate the energy level. From the energy identity, the Brézis–Lieb decomposition, \eqref{tobl2} and \eqref{downbelow}, we write:
		\begin{align}\nonumber
			c &= E(u_n) + o(1) \\\nonumber
			&= \frac{1}{p} \|\tilde u_n\|^p - \frac{1}{p_{2}^{*}} \|\tilde u_n\|^{p_{2}^{*}}_{p_{2}^{*}} 
			+ \frac{1}{p} \|u\|^p - \frac{1}{p_{2}^{*}} \|u\|^{p_{2}^{*}}_{p_{2}^{*}} 
			- \frac{\mu}{r} \|u\|_r^r - \frac{\lambda}{p} \|u\|_p^p - \int_\Omega h u\,dx + o(1)\\\label{toremark1}&= \frac2N\|\tilde u_n\|^p+\frac{2}{N}\|u\|^{p_{2}^{*}}_{p_{2}^{*}}+\mu\left(\frac1p-\frac1r\right)\|u\|_r^r-\left(1-\frac1p\right)\int_\Omega h\,u\,dx+o(1)\\\nonumber
			&\geq \frac2N\|\tilde u_n\|^p+\left[\frac{2}{N}\|u\|^{p_{2}^{*}}_{p_{2}^{*}}-|\mu|\left(\frac1p-\frac1r\right)|\Omega|^{1-r/p_2^*}\|u\|_{p_2^*}^r-\frac{p-1}p\|h\|_{(p_{2}^{*})^\prime}\|u\|_{p_{2}^{*}}\right]+o(1).
		\end{align}
		Now, one can use Young's inequality to infer existence of a constant \(\kappa=\kappa(N,p,r,\Omega)\) such that %the young inequality here is with epsilon, in order to split the terms |\mu|\|u\|_{p_2^*}^r and \|h\|_{(p_{2}^{*})^\prime}\|u\|_{p_{2}^{*}}, letting epsilon be in the u norms, which are transformed in p_2^* norms so the positive term wins. This is why it appears the exponent on |\mu|.
		\begin{equation*}
			\left[\frac{2}{N}\|u\|^{p_{2}^{*}}_{p_{2}^{*}}-|\mu|\left(\frac1p-\frac1r\right)|\Omega|^{1-r/p_2^*}\|u\|_{p_2^*}^r-\frac{p-1}p\|h\|_{(p_{2}^{*})^\prime}\|u\|_{p_{2}^{*}}\right]\geq - {\kappa}\left(|\mu|^{(p_2^*/r)'} + \|h\|_{(p_2^*)'}^{(p_2^*)'}\right)
		\end{equation*}
		These last estimates imply that
		\[
		c \geq\frac2N\|\tilde u_n\|^p
		- {\kappa}\left(|\mu|^{(p_2^*/r)'} + \|h\|_{(p_2^*)'}^{(p_2^*)'} \right) + o(1).
		\]
		
		Now, assume the energy level satisfies the strict inequality:
		\[
		c < \frac{2}{N} S_{2,p}^{N/2p} - \kappa \left(|\mu|^{(p_2^*/r)'} + \|h\|_{(p_2^*)'}^{(p_2^*)'} \right).
		\]
		Then the above estimate yields a contradiction with \eqref{eq:threshold_step} unless \( \|\tilde u_n\| \to 0 \). Thus, \( u_n \to u \) strongly in \( W^{2,p}(\Omega)\cap W_0^{1,p}(\Omega) \), completing the proof.
	\end{proof}
	%-----------------------------------------------------------------
	\begin{remark}
		Observe that the term \(|\mu|\) appearing in the threshold condition can be completely dropped when \(\mu \geq 0\). That is, it is enough to require
		\[
		c < \frac{2}{N} S_{2,p}^{N/2p} - \kappa \|h\|_{(p_2^*)'}^{(p_2^*)'}
		\]
		in this case. Indeed, if \(\mu \geq 0\), then the term involving \(\mu\) in \eqref{toremark1} is nonnegative and can be omitted from the upper estimates in the proof. Nevertheless, $\mu$ is required to be small in the abstract theorem \ref{thm:2.1}, so, we need only the result of Lemma \ref{lem:PSbiharmonic}.
	\end{remark}
	
	\subsection{On the geometric conditions for $E_0$}

	We consider the sphere
	\[
	\mathcal{M} := \left\{ u \in W : I_p(u) = 1 \right\}
	= \left\{ u \in W : \|u\| = p^{1/p} \right\}.
	\]
	(Recall the definition of $I_p(u)$ given in \eqref{eq:2.3}, which gives, in our case, \(I_p(u)=\|u\|^p/p\)). The radial projection on \( \mathcal{M} \) is given by
	\[
	\pi_{\mathcal{M}}(u) := \frac{p^{1/p} u}{\|u\|}, \qquad \text{for } u \in W \setminus \{0\}.
	\]
	
	Let \( \Psi(u) := p\left( \int_\Omega |u|^p\,dx \right)^{-1} \) for \( u \in \mathcal{M} \), and for any \( a > 0 \), define the sublevel and superlevel sets:
	\[
	\Psi^a := \left\{ u \in \mathcal{M} : \Psi(u) \le a \right\}, \qquad
	\Psi_a := \left\{ u \in \mathcal{M} : \Psi(u) \ge a \right\}.
	\]
	
	We begin by stating a result of independent interest, which we need in the proof of the main results.
	
	\begin{proposition}\label{adapt_perera_lemma}
		If \(\lambda_k < \lambda_{k+1}\), then the sublevel set \(\Psi^{\lambda_k}\) has a compact symmetric subset \(C_0\) which is bounded in \(L^\infty(\Omega)\cap C^{2}_{loc}(\Omega)\) and \(i(C_0)=k\). 
	\end{proposition}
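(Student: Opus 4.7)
The plan is to start from the compact symmetric subset $C\subset\Psi^{\lambda_k}$ of index $k$ supplied by \eqref{eq:index} and to deform it, via an odd pseudo-gradient flow for $\Psi$, into an arbitrarily small neighborhood of the critical set of $\Psi$ at levels $\leq\lambda_k$, which by elliptic regularity consists of smooth eigenfunctions.

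First I would establish the regularity of the critical set $K:=\{u\in\mathcal{M}:\Psi'(u)=0,\ \Psi(u)\leq\lambda_k\}$. Since $B_p$ is compact and $A_p$ satisfies (A1)--(A2), $\Psi$ fulfils the Palais--Smale condition on $\mathcal{M}$, so $K$ is compact in $W$. Every $u\in K$ is a weak solution of $\Delta_p^2 u=\mu\,|u|^{p-2}u$ with $\mu=\Psi(u)\leq\lambda_k$ and $\|u\|=p^{1/p}$. Because the right-hand side is strictly subcritical in the $p$-biharmonic sense, a Moser iteration (either directly on the fourth-order equation or through the second-order splitting $v:=|\Delta u|^{p-2}(-\Delta u)$ used in the proof of Corollary~\ref{corollaryhom}) yields a uniform $L^\infty(\Omega)$-bound on $K$. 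Calder\'on--Zygmund and Schauder estimates then upgrade this to a uniform bound in $C^{2,\gamma}(\overline{\Omega})$, showing that $K$ is bounded in $L^\infty(\Omega)\cap C^{2}_{\mathrm{loc}}(\Omega)$.

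Next I would apply the standard odd pseudo-gradient deformation on the Finsler manifold $\mathcal{M}$. Since $\lambda_k<\lambda_{k+1}$ and the eigenvalues form a discrete sequence by (PS), iterating the deformation lemma across each critical level in $[0,\lambda_k]$ produces, for every open symmetric neighborhood $\mathcal{N}$ of $K$, an odd continuous map $\eta\colon\mathcal{M}\to\mathcal{M}$ with $\Psi\circ\eta\leq\Psi$ and $\eta(\Psi^{\lambda_k})\subset\mathcal{N}$. Setting $C_0:=\eta(C)$ gives a compact symmetric subset of $\Psi^{\lambda_k}\cap\mathcal{N}$, and the odd continuity of $\eta$ together with monotonicity of the index yields $k=i(C)\leq i(C_0)\leq i(\Psi^{\lambda_k})=k$.

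The main obstacle is to ensure that $C_0$ is bounded in $L^\infty(\Omega)\cap C^{2}_{\mathrm{loc}}(\Omega)$, not merely in $W^{2,p}$: a generic $W^{2,p}$-small neighborhood $\mathcal{N}$ of the $C^{2,\gamma}$-bounded set $K$ is not $L^\infty$-bounded. The resolution is to exploit the regularizing character of the pseudo-gradient: up to normalization, its direction involves $A_p^{-1}(\Psi(u)\,B_p u)$, and $A_p^{-1}$, being the Navier biharmonic solution operator, gains two derivatives and takes bounded sets of $W^{*}$ into bounded sets of $C^{2,\gamma}(\overline{\Omega})$. Running the flow for a sufficiently long finite time $T$ therefore sends $\mathcal{M}$ into a $C^{2,\gamma}(\overline{\Omega})$-ball whose radius depends only on $\lambda_k$ and the sup-norm bound on $K$ obtained in the first step, and taking $C_0:=\eta(T,C)$ delivers the required compact symmetric set.
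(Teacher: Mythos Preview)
Your outline up to the last paragraph is reasonable, but the ``resolution'' you propose there contains a genuine gap, and it is precisely the point where the paper's argument differs from yours.

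First, the claim that ``$A_p^{-1}$ \ldots\ takes bounded sets of $W^{*}$ into bounded sets of $C^{2,\gamma}(\overline{\Omega})$'' is false. By definition $A_p^{-1}$ maps $W^{*}$ onto $W=W^{2,p}(\Omega)\cap W^{1,p}_0(\Omega)$, nothing better; since $2p<N$ here, $W$ does not embed into $L^\infty$, let alone $C^{2,\gamma}$. Even if you feed in $B_p u=|u|^{p-2}u\in L^{p'}$ instead of a generic element of $W^{*}$, a single application of the (nonlinear) solution operator only raises integrability by a fixed amount; it does not jump to $L^\infty$ in one step. Second, a continuous pseudo-gradient flow $\dot u=-V(u)$ does not regularize initial data in finite time: $u(T)=u(0)-\int_0^T V(u(s))\,ds$, so $u(T)$ is only as smooth as $u(0)$ regardless of how nice $V(u(s))$ is. A pseudo-gradient is merely a locally Lipschitz $W$-valued field satisfying angle conditions; there is no mechanism forcing $\eta(T,\mathcal M)$ into a $C^{2,\gamma}$-ball.

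The paper avoids this by replacing the continuous flow with the discrete iteration of the nonlinear resolvent $K(u):=$ the solution of $\Delta^2_p v=|u|^{p-2}u$ (Navier), following Degiovanni--Lancelotti. Each application of $K$ bootstraps integrability, so after a fixed number $m=m(N,p)$ of iterations one lands in $L^\infty\cap C^2_{\mathrm{loc}}$; moreover $\pi_{\mathcal M}\circ K$ is odd, continuous and \emph{decreases} $\Psi$, so $C_0:=\pi_{\mathcal M}(K^m(C))$ stays in $\Psi^{\lambda_k}$ and inherits index $k$. If you want to salvage your approach, this iterated resolvent is exactly the ``regularizing pseudo-gradient'' you are looking for, but it must be applied discretely, not as a continuous flow.
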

	\begin{proof}

By the abstract result \cite[Theorem~1.3]{Perera3} there exists a
compact symmetric set  \(C\subset\Psi^{\lambda_k}\)
with cohomological index \(i(C)=k\).
At this point no regularity beyond \(W^{2,p}(\Omega)\cap W^{1,p}_{0}(\Omega)\)
is guaranteed. Therefore, we implement an interaction scheme based on ideas in \cite[Theorem 2.3]{deGio-Lance}.

\smallskip

For \(w\in L^{p}(\Omega)\) let \(T(w)\in W^{2,p}(\Omega)\cap W^{1,p}_{0}(\Omega)\)
be the unique weak solution of
\[
   \Delta^{2}_{p}T(w)=|w|^{p-2}w
   \quad\text{in }\Omega,\qquad
   T(w)=\Delta T(w)=0\text{ on }\partial\Omega .
\]
Write \(K:=T\!\!\restriction_{W^{2,p}\cap W^{1,p}_{0}}\).
Then,
\(K\colon W^{2,p}\cap W^{1,p}_{0}\to W^{2,p}\cap W^{1,p}_{0}\) is compact.
Moreover, iterating \(K\) improves integrability:
for every \(u\in W^{2,p}\cap W^{1,p}_{0}\)
there exists \(m=m(N,p)\) such that
\[  
   K^{m}(u)\in L^{\infty}(\Omega)\cap C^{2}_{\mathrm{loc}}(\Omega)
   \quad\text{and}\quad
   K^{m}(u)\not\equiv0.
\]

Compactness of \(C\) and continuity of \(K\) imply that the same
iterate \(m\) works for \emph{all} \(u\in C\); hence
\(
   K^{m}(C)\subset L^{\infty}(\Omega)\cap C^{2}_{\mathrm{loc}}(\Omega).
\)

Define
\[
   H\colon C \longrightarrow \mathcal M,
   \qquad
   H(u):=\pi_{\mathcal M}\bigl(K^{m}(u)\bigr)
   =\frac{K^{m}(u)}{I_{p}\bigl(K^{m}(u)\bigr)^{1/p}} .
\]
Because \(\pi_{\mathcal M}\circ K^m\) is odd and continuous,
\(H\) is an odd homeomorphism onto its image.
Set \(C_{0}:=H(C)\). This finished the proof. \end{proof}

	From now on, we assume \( \lambda_k \leq \lambda < \lambda_{k+1} \), \( k \geq 1 \). The case \( 0 < \lambda < \lambda_1 \) is easier to handle since we can take $C_0=\emptyset$ for the geometric properties of $E_0$ and we omit the details. 
	
	\medskip
	
	Let us suppose, without loss of generality, that \( 0 \in \Omega \). Consider \( \eta : [0,\infty) \rightarrow [0,1] \) to be a smooth function such that \( \eta \equiv 0 \) on \( [0, 3/4] \) and \( \eta \equiv 1 \) on \( [1, +\infty) \). Then, let
	\[
	u_\rho(x) = \eta\left(\frac{|x|}{\rho}\right) u(x).
	\]
	Define \( \rho_0 = \mathrm{dist}(0, \partial\Omega) \). For each \( 0 < \rho < \rho_0 / 2 \), define the set
	\begin{equation}\label{C_ro}
		C(\rho) := \left\{ \pi_{\mathcal{M}}(u_\rho) ;\, u \in C_0 \right\},  
	\end{equation}
	where \( C_0 \) is given in Proposition \ref{adapt_perera_lemma}. Notice that all functions in $C(\rho)$ vanish in the ball $B_{3\rho/4}(0)$. In essence, we punch small ``holes'' in the functions of \(C_{0}\).
	These holes create space to insert the concentration test functions, whose supports lie entirely inside the holes and are thus disjoint from
the supports of the functions in \(C(\rho)\).
	Keeping the two supports separated greatly simplifies the forthcoming
estimates. We must verify that this alteration of \(C_{0}\) preserves all the key properties required for the subsequent arguments.
	
	\begin{lemma}\label{lemmadocro}
		The set \( C(\rho) \) given in \eqref{C_ro} is compact, symmetric and \( C(\rho) \subset \Psi^{\lambda_k + c_1\rho^{N - 2p}} \) for some \( c_1 > 0 \) and for all \( 0 < \rho < \min\{1, \rho_0 / 2\} \). Moreover, \( i(C(\rho)) = k \) whenever \( \lambda_k + c_1\rho^{N - 2p} < \lambda_{k+1} \).
	\end{lemma}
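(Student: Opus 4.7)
The plan is to verify the three assertions of the lemma separately: (a) compactness and symmetry of $C(\rho)$, (b) the sublevel inclusion $C(\rho)\subset\Psi^{\lambda_k+c_1\rho^{N-2p}}$, and (c) the index equality $i(C(\rho))=k$. I expect the main technical obstacle to be step (b), whose rate crucially exploits the uniform $L^\infty\cap C^2_{\mathrm{loc}}$ regularity of $C_0$ produced by Proposition~\ref{adapt_perera_lemma}; steps (a) and (c) then follow from general principles.

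For (a), I would first note that the cutoff operator $T_\rho u:=\eta(|\cdot|/\rho)\,u$ is a bounded linear map on $W$, continuous and odd, and preserves the Navier boundary conditions because $\eta(|\cdot|/\rho)\equiv 1$ near $\partial\Omega$. Since $C_0\subset\Psi^{\lambda_k}\subset\mathcal{M}$, for every $u\in C_0$ one has $\|u\|_p^p\ge p/\lambda_k$, while the uniform $L^\infty$ bound $\|u\|_{L^\infty(\Omega)}\le M$ forces $\int_{B_\rho}|u|^p\le CM^p\rho^N$; hence $T_\rho u\not\equiv 0$ uniformly in $u\in C_0$ for small $\rho$. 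The radial projection $\pi_{\mathcal{M}}$ is therefore well defined along $T_\rho(C_0)$, yielding a continuous odd map $\pi_{\mathcal{M}}\circ T_\rho:C_0\to\mathcal{M}$ whose image $C(\rho)$ inherits compactness and symmetry from $C_0$.

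For (b), I would compute $\Psi(\pi_{\mathcal{M}}(u_\rho))=\|u_\rho\|^p/\|u_\rho\|_p^p$ and localise both integrals on $B_\rho$, since $u_\rho\equiv u$ on $\Omega\setminus B_\rho$ and $u_\rho\equiv 0$ on $B_{3\rho/4}$. Expanding $\Delta u_\rho$ by the product rule and using the elementary scalings $|\nabla\eta(|\cdot|/\rho)|\le C\rho^{-1}$ and $|\Delta\eta(|\cdot|/\rho)|\le C\rho^{-2}$ on the annulus $B_\rho\setminus B_{3\rho/4}$, together with the uniform bound $\|u\|_{C^2(\overline{B_{\rho_0/2}})}\le M$, gives $|\Delta u_\rho|\le CM/\rho^{2}$ there. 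Integrating over this annulus, whose volume is of order $\rho^N$, yields $\|u_\rho\|^p\le p+C\rho^{N-2p}$, while the $L^\infty$ bound produces the rougher estimate $\|u_\rho\|_p^p\ge p/\lambda_k-C\rho^N$. Combining the two and using $\rho^N\le\rho^{N-2p}$ for $\rho\le 1$ (recall $N-2p>0$, as $p<N/2$), a short algebraic manipulation gives
\[
\Psi(\pi_{\mathcal{M}}(u_\rho))\;\le\;\frac{p+C_1\rho^{N-2p}}{p/\lambda_k-C_2\rho^{N}}\;\le\;\lambda_k+c_1\rho^{N-2p}
\]
uniformly in $u\in C_0$, as claimed.

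For (c), I would use the continuous odd map $\pi_{\mathcal{M}}\circ T_\rho:C_0\to C(\rho)$ built in step (a); monotonicity of the Fadell--Rabinowitz index under odd continuous maps gives $k=i(C_0)\le i(C(\rho))$. For the reverse inequality, step (b) yields $C(\rho)\subset\Psi^{\lambda_k+c_1\rho^{N-2p}}$, so $i(C(\rho))\le i(\Psi^{\lambda_k+c_1\rho^{N-2p}})$. Under the hypothesis $\lambda_k+c_1\rho^{N-2p}<\lambda_{k+1}$, no eigenvalue of $\Psi$ lies in $(\lambda_k,\lambda_k+c_1\rho^{N-2p}]$, so the standard negative-gradient deformation retracts $\Psi^{\lambda_k+c_1\rho^{N-2p}}$ onto $\Psi^{\lambda_k}$, and invariance of the cohomological index under such deformations together with~\eqref{eq:index} gives $i(\Psi^{\lambda_k+c_1\rho^{N-2p}})=i(\Psi^{\lambda_k})=k$. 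The two inequalities close the argument.
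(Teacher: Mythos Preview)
Your proof is essentially correct and follows the paper's approach for parts (a) and (b). Part (c), however, contains a gap.

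For the upper bound $i(C(\rho))\le k$, you argue that since $\lambda_k+c_1\rho^{N-2p}<\lambda_{k+1}$, no eigenvalue of $\Psi$ lies in $(\lambda_k,\lambda_k+c_1\rho^{N-2p}]$, and hence a gradient-flow deformation retracts $\Psi^{\lambda_k+c_1\rho^{N-2p}}$ onto $\Psi^{\lambda_k}$. The premise is not justified: the sequence $(\lambda_j)$ is defined by a cohomological-index minimax and, for $p\neq 2$, it is not known to exhaust the full set of critical values of $\Psi$. There may well be eigenvalues of~\eqref{maineigenvalueproblem} strictly between $\lambda_k$ and $\lambda_{k+1}$ that do not appear in the sequence, in which case the deformation lemma fails on that interval.

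The paper avoids this issue entirely. From $C(\rho)\subset\Psi^{\lambda_k+c_1\rho^{N-2p}}$ and the hypothesis $\lambda_k+c_1\rho^{N-2p}<\lambda_{k+1}$ one has directly $C(\rho)\subset\mathcal{M}\setminus\Psi_{\lambda_{k+1}}$, and monotonicity together with~\eqref{eq:index} gives $i(C(\rho))\le i(\mathcal{M}\setminus\Psi_{\lambda_{k+1}})=k$. No deformation is needed. Your argument can be repaired in the same way: simply replace the deformation step by this inclusion, or note that $\Psi^{\lambda_k}\subset\Psi^{\lambda_k+c_1\rho^{N-2p}}\subset\mathcal{M}\setminus\Psi_{\lambda_{k+1}}$ already forces $i(\Psi^{\lambda_k+c_1\rho^{N-2p}})=k$ by monotonicity and~\eqref{eq:index}, regardless of any intermediate critical values.
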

	
	\begin{proof}
		Since \( C_0 \subset C^2_{\mathrm{loc}}(\Omega) \) and is bounded in this space, there exists \( K > 0 \) such that
		\[
		\|u\|_{L^\infty(B_{\rho}(0))},\quad \|\nabla u\|_{L^\infty(B_{\rho}(0))},\quad \|\Delta u\|_{L^\infty(B_{\rho}(0))} \leq K \quad \text{for all } u \in C_0.
		\]
		Then, fixing \( u \in C_0 \), we have
		\begin{align*}
			\int_\Omega |\Delta u_\rho|^p\,dx 
			&= \int_{\Omega \setminus B_\rho(0)} |\Delta u|^p\,dx \\
			&\quad + \int_{B_\rho(0)} \left| 
			u \left( \frac{\eta^{\prime\prime}}{\rho^2} + \frac{N - 1}{|x|} \frac{\eta^\prime}{\rho} \right)
			+ 2 \nabla \eta \cdot \nabla u
			+ \eta \Delta u 
			\right|^p dx \\
			&\leq \int_{\Omega} |\Delta u|^p\,dx + K_1 \rho^N + K_2 \rho^{N - p} + K_3 \rho^{N - 2p} \leq p + K_4 \rho^{N - 2p},
		\end{align*}
		for some constants \( K_1, K_2, \dots \).
		
		Now,
		
		\[
		\int_{\Omega} |u_\rho|^p \, dx 
		\geq \int_{\Omega \setminus B_\rho(0)} |u|^p \, dx 
		= \int_{\Omega} |u|^p \, dx - \int_{B_\rho(0)} |u_\rho|^p \, dx 
		\geq \frac{p}{\lambda_k} - c \rho^N
		\]
		for some constant \( c>0 \). So
		\[
		\Psi(\pi_{\mathcal{M}}(u_\rho)) = 
		\frac{ \displaystyle \int_\Omega |\Delta u_\rho|^p \, dx }
		{ \displaystyle \int_\Omega |u_\rho|^p \, dx }
		\leq \lambda_k + c_1 \rho^{N-2p}
		\]
		for some constant \( c_1 > 0 \). Then 
		\( C(\rho) \subset \Psi^{\lambda_k + c_1 \rho^{N-2p}} \).  Notice that
		\( C(\rho) \) is compact and symmetric, since \( C_0 \) is a compact symmetric set and 
		\( u \mapsto \pi_{\mathcal{M}}(u_\rho) \) is an odd continuous map of \( C_0 \) onto \( C(\rho) \). Moreover,
		\[
		i(C(\rho)) \geq i(C_0) = k
		\]
		by the monotonicity of the index. If 
		\( \lambda_k + c_1 \rho^{N-2p} < \lambda_{k+1} \),
		then \( C(\rho) \subset \mathcal{M} \setminus \Psi_{\lambda_{k+1}} \), which implies that
		\[
		i(C(\rho)) \leq i(\mathcal{M} \setminus \Psi_{\lambda_{k+1}}) = k
		\]
		by \eqref{eq:index}, so \( i(C(\rho)) = k \) and we finish the proof. 
	\end{proof}
	
	Now, define \(E_0:W\rightarrow\mathbb R\) given by
	\begin{equation*}
		E_0(u)=\frac{1}{p}\int_{\Omega}|\Delta u|^pdx-\frac{\lambda}{p}\int_{\Omega}|u|^pdx-\frac{1}{p_2^*}\int_{\Omega}|u|^{p_2^*}dx.
	\end{equation*}
	
	This is the homogeneous functional that we use when applying Theorem \ref{thm:2.1}. We need to verify the geometric conditions in \eqref{eq:2.8} and.\eqref{eq:2.9}. As mentioned above, the advantage of working with \( C = C(\rho) \) instead of \( C_0 \) is that the functions in \( C \) have support disjoint from any function supported in \( B_{\rho/2}(0) \). This property simplifies the verification of the these  conditions. The next lemma deals with \eqref{eq:2.8}.
	
	\begin{lemma}\label{lemmageometry1}
		Let \(0<\rho\le \rho_0/2\). Then, for all \(w\in\mathcal M\backslash C(\rho)\) with support in \(\overline{B_{\rho/2}(0)}\), there exists \(R_0=R_0(\rho_0,w)\) such that \[
		\sup_{u\in A}E_0(Ru)\le0,
		\] 
		for all \(R\geq R_0\), where \(A:=\{\pi_{\mathcal M}((1-\tau)v+\tau w);\,v\in C(\rho),0\le\tau\le1\}\).
	\end{lemma}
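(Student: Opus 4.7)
The plan is to exploit a clean disjoint-supports structure built into the definition of \(C(\rho)\). By Lemma~\ref{lemmadocro}, every \(v\in C(\rho)\) vanishes identically on \(B_{3\rho/4}(0)\), while by hypothesis \(w\) is supported in \(\overline{B_{\rho/2}(0)}\subset B_{3\rho/4}(0)\). Consequently \(v\) and \(w\) have disjoint supports, and so do their distributional Laplacians (since \(v\equiv0\) on the open set \(B_{3\rho/4}(0)\) forces \(\Delta v\equiv0\) there in the weak sense, and analogously for \(w\) on \(\Omega\setminus\overline{B_{\rho/2}(0)}\)). Therefore, for every \(\tau\in[0,1]\) and every \(q\in\{p,p_{2}^{*}\}\),
\[
\bigl\|(1-\tau)v+\tau w\bigr\|_{q}^{q}=(1-\tau)^{q}\|v\|_{q}^{q}+\tau^{q}\|w\|_{q}^{q},
\qquad
\bigl\|(1-\tau)v+\tau w\bigr\|^{p}=(1-\tau)^{p}\|v\|^{p}+\tau^{p}\|w\|^{p}.
\]
Since \(v,w\in\mathcal{M}\) satisfy \(\|v\|^{p}=\|w\|^{p}=p\), the last identity gives \(\|(1-\tau)v+\tau w\|^{p}=p[(1-\tau)^{p}+\tau^{p}]\), so the radial projection has the explicit form
\[
u_{v,\tau}:=\pi_{\mathcal{M}}\bigl((1-\tau)v+\tau w\bigr)=\frac{(1-\tau)v+\tau w}{[(1-\tau)^{p}+\tau^{p}]^{1/p}}.
\]

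Next I would evaluate the energy along the ray \(R\mapsto Ru_{v,\tau}\). Because \(u_{v,\tau}\in\mathcal{M}\), we have \(\frac{1}{p}\int_{\Omega}|\Delta u_{v,\tau}|^{p}\,dx=1\), and dropping the non-positive term \(-\lambda R^{p}\|u_{v,\tau}\|_{p}^{p}/p\) yields
\[
E_{0}(Ru_{v,\tau})\le R^{p}-\frac{R^{p_{2}^{*}}}{p_{2}^{*}}\,\|u_{v,\tau}\|_{p_{2}^{*}}^{p_{2}^{*}}.
\]
Thus the lemma reduces to producing a uniform positive lower bound \(M>0\) on \(\|u_{v,\tau}\|_{p_{2}^{*}}^{p_{2}^{*}}\) over \((v,\tau)\in C(\rho)\times[0,1]\): once \(M>0\) is available, the elementary inequality \(R^{p}\le\tfrac{M}{p_{2}^{*}}R^{p_{2}^{*}}\) holds for every \(R\ge R_{0}:=(p_{2}^{*}/M)^{1/(p_{2}^{*}-p)}\), which finishes the proof.

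Applying the disjoint-supports identity once more I compute
\[
\|u_{v,\tau}\|_{p_{2}^{*}}^{p_{2}^{*}}=\frac{(1-\tau)^{p_{2}^{*}}\|v\|_{p_{2}^{*}}^{p_{2}^{*}}+\tau^{p_{2}^{*}}\|w\|_{p_{2}^{*}}^{p_{2}^{*}}}{[(1-\tau)^{p}+\tau^{p}]^{p_{2}^{*}/p}}.
\]
Since \(C(\rho)\subset\mathcal{M}\) is compact and \(v\mapsto\|v\|_{p_{2}^{*}}\) is continuous and strictly positive there (by the Sobolev embedding and \(v\not\equiv0\)), we get \(a_{0}:=\inf_{v\in C(\rho)}\|v\|_{p_{2}^{*}}^{p_{2}^{*}}>0\); moreover \(b_{0}:=\|w\|_{p_{2}^{*}}^{p_{2}^{*}}>0\). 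The quotient is therefore bounded below by \(\min(a_{0},b_{0})\) times the continuous, strictly positive function \(\tau\mapsto[(1-\tau)^{p_{2}^{*}}+\tau^{p_{2}^{*}}]/[(1-\tau)^{p}+\tau^{p}]^{p_{2}^{*}/p}\) on \([0,1]\), whose minimum \(h_{0}\) is positive. Hence \(M\ge\min(a_{0},b_{0})\,h_{0}>0\), and the resulting \(R_{0}\) depends on \(\rho_{0}\) (through \(C(\rho)\)) and on \(w\) (through \(b_{0}\)), as claimed.

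The argument presents no real obstacle: the ``punching holes'' construction of \(C(\rho)\) already did the heavy lifting by making the geometry explicit. The only delicate point is the uniform lower bound \(a_{0}>0\), which follows immediately from compactness of \(C(\rho)\) in \(\mathcal{M}\) together with the fact that no element of \(\mathcal{M}\) is zero in \(L^{p_{2}^{*}}(\Omega)\).
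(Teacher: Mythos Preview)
Your argument is correct and follows the same skeleton as the paper's: exploit the disjoint supports to split the norms, drop the $\lambda$-term, and reduce everything to a uniform positive lower bound on $\|u\|_{p_2^*}$ over $A$. The one substantive difference is how that lower bound is obtained. You stay in $L^{p_2^*}$ and invoke compactness of $C(\rho)$ to get $a_0=\inf_{v\in C(\rho)}\|v\|_{p_2^*}^{p_2^*}>0$; the paper instead passes to $L^p$ via H\"older and then uses the sublevel-set inclusion $C(\rho)\subset\Psi^{\lambda_k+c_1\rho^{N-2p}}$ from Lemma~\ref{lemmadocro}, giving the explicit bound $\|v\|_p^p\ge p/(\lambda_k+c_1\rho^{N-2p})\ge p/(\lambda_k+c_1(\rho_0/2)^{N-2p})$. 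The payoff of the paper's route is that this bound is \emph{independent of $\rho$}, so $R_0$ genuinely depends only on $\rho_0$ and $w$, as the lemma asserts. Your $a_0$, by contrast, depends on $\rho$ through $C(\rho)$, so your final sentence---that $R_0$ depends on $\rho_0$ ``through $C(\rho)$''---is not quite right: it depends on $\rho$ itself. This is harmless for the downstream applications (where $\rho$ is fixed before $R$ is chosen), but if you want to match the lemma exactly as stated, either pass to $L^p$ as the paper does, or note that H\"older combined with the sublevel bound already controls $\|v\|_{p_2^*}$ uniformly for all $\rho\le\rho_0/2$.
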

	\begin{proof}
		Let \( u = \pi_{\mathcal{M}}((1 - \tau)v + \tau w) \in A \). For \( R > 0 \), consider the estimate
		\[
		E_0(Ru) \leq \int_\Omega \left( \frac{R^p}{p} |\Delta u|^p - \frac{R^{p_2^*}}{p_2^*} |u|^{p_2^*} \right) dx = R^p - \frac{R^{p_2^*}}{p_2^*} \|u\|_{p_2^*}^{p_2^*}.
		\]
		Therefore, to ensure that \( E_0(Ru) \le 0 \), it suffices to show that \( \|u\|_{p_2^*} \) is bounded away from zero on \( A \). By Hölder's inequality, it suffices to prove that \( \|u\|_p \) is bounded away from zero.
		
		Since \(v, w \in \mathcal{M}\), we have
		\[
		\|u\|_p^p 
		= \frac{ p\|(1 - \tau)v + \tau w\|_p^p }{ \|(1 - \tau)v + \tau w\|^p }.
		\]
		Then, using that \( v, w  \) have disjoint supports, we compute:
		\[
		\|(1 - \tau)v + \tau w\|_p^p = (1 - \tau)^p \|v\|_p^p + \tau^p \|w\|_p^p,
		\quad \text{and} \quad 
		\|(1 - \tau)v + \tau w\|^p = p(1 - \tau)^p + p\tau^p,
		\]
		so we conclude
		\[
		\|u\|_p^p = \frac{(1 - \tau)^p \|v\|_p^p + \tau^p \|w\|_p^p}{(1 - \tau)^p + \tau^p} 
		\ge \min\{ \|v\|_p^p, \|w\|_p^p \}.
		\]
		
		Thus, it suffices to show that \( \|v\|_p \) is bounded away from zero on \( C(\rho) \). But since \( C(\rho) \subset \Psi^{\lambda_k + c_1 \rho^{N - 2p}} \), by Lemma \ref{lemmadocro} we know
		\[
		\|v\|_p^p = \frac{p}{\Psi(v)} \ge \frac{p}{\lambda_k + c_1 \rho^{N - 2p}}\ge  \frac{p}{\lambda_k + c_1 (2\rho_0)^{N - 2p}},
		\]
		as required.
	\end{proof}
	
	Now choose \( 0 < \rho < \min\{1, \rho_0 / 2\} \) such that
	\begin{equation}\label{choiceofdeltaandrho}
		\lambda_k + c_1 \rho^{N - 2p} < \begin{cases}
			\lambda, & \text{if } \displaystyle 
			\lambda_{k}<\lambda<\lambda_{k+1},\\[6pt]
			\lambda_{k+1}, & \text{if } \displaystyle \lambda=\lambda_k,
		\end{cases}
	\end{equation}
	where $c_1>0$ is given in Lemma \ref{lemmadocro}. 
	We shall construct an element \( w_\rho \in \mathcal{M} \setminus C(\rho) \) such that, defining the cone-like joining set
	\begin{equation}\label{defofAdelta}
		A(\rho) := \left\{ 
		\pi_{\mathcal{M}} \left( (1 - \tau) v + \tau w_\rho \right) 
		: v \in C(\rho),\; 0 \le \tau \le 1 
		\right\},
	\end{equation}
	we obtain the energy estimate
	\begin{equation}\label{supinicial}
		\sup_{u \in A(\rho),\; t \ge 0} E_0(tu) < \frac{2}{N} S_{2,p}^{N/2p}.
	\end{equation}
	
	This will be achieved under the assumptions of Theorems \ref{newhom1} (or \ref{theononhomogeneousnonressonant}) and \ref{newhom2} (or \ref{theoresonant_nonhom}), for sufficiently small \( \rho \). The function \( w_\rho \) will be a suitable modification of the Aubin–Talenti concentration profiles. This construction will be carried out in the next subsection. Notice that, due to the definition of $A(\rho)$, \eqref{supinicial} will hold if and only if
	\begin{equation}\label{supwithoutpiM}
		\sup_{v\in C(\rho);\,t \ge 0;\,\tau\ge 0} E_0(tv+\tau w_\rho) < \frac{2}{N} S_{2,p}^{N/2p}.
	\end{equation}
	Moreover, in the next section we will see that $w_\rho$ will be chosen such that its support is disjoint from all functions on $C(\rho)$. Therefore, we can estimate $E_0(tv)$, for $v\in C(\rho)$ and $E_0(\tau w_\rho)$ separately. So, let us prove the following.
	
	\begin{lemma}\label{lemmasupofcdelta}
		For any $\rho$ satisfying \eqref{choiceofdeltaandrho}, we have
		\[
		\sup_{v\in C(\rho),\;t\ge 0} E_0(tv)\;\le\;
		\begin{cases}
			0, & \text{if } \displaystyle 
			\lambda_k< \lambda<\lambda_{k+1},\\[6pt]
			c\,\rho^{\,N(N-2p)/2p}, & \text{if } \displaystyle \lambda=\lambda_k,
		\end{cases}
		\]
		where the set $C(\rho)$ is given in Lemma \ref{lemmadocro}  and \(c>0\) is a constant.
	\end{lemma}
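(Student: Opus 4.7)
The plan is to substitute $tv$ directly into $E_0$ and exploit the two pieces of information we have about $v\in C(\rho)$: the normalization $\|v\|^p=p$ coming from $v\in\mathcal M$, and the sharp estimate $\|v\|_p^p\ge p/(\lambda_k+c_1\rho^{N-2p})$ furnished by Lemma~\ref{lemmadocro}. After substitution one obtains
\[
E_0(tv)=t^p\Bigl(1-\tfrac{\lambda}{p}\|v\|_p^p\Bigr)-\tfrac{t^{p_2^*}}{p_2^*}\|v\|_{p_2^*}^{p_2^*},
\]
so the whole lemma reduces to controlling the coefficient of $t^p$ and then performing an elementary one-variable maximization.

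In the non-resonant case $\lambda_k<\lambda<\lambda_{k+1}$, I would use \eqref{choiceofdeltaandrho} to conclude $\lambda_k+c_1\rho^{N-2p}<\lambda$, which combined with Lemma~\ref{lemmadocro} gives $\|v\|_p^p>p/\lambda$, hence $1-\frac{\lambda}{p}\|v\|_p^p<0$. Since the $t^{p_2^*}$-term is also non-positive, it follows that $E_0(tv)\le 0$ for every $t\ge 0$ and $v\in C(\rho)$, with the supremum $0$ attained at $t=0$. This settles the first case.

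The resonant case $\lambda=\lambda_k$ is the genuinely delicate one, because \eqref{choiceofdeltaandrho} only yields $\lambda_k+c_1\rho^{N-2p}<\lambda_{k+1}$ and the coefficient of $t^p$ is no longer non-positive. The plan here is first to estimate
\[
1-\tfrac{\lambda_k}{p}\|v\|_p^p\;\le\;1-\tfrac{\lambda_k}{\lambda_k+c_1\rho^{N-2p}}\;\le\;c_2\rho^{N-2p},
\]
and then maximize $\phi(t)=at^p-(b/p_2^*)t^{p_2^*}$ over $t\ge 0$, where $a=O(\rho^{N-2p})$ and $b=\|v\|_{p_2^*}^{p_2^*}$. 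The classical one-variable maximum gives $\phi_{\max}$ proportional to $a^{p_2^*/(p_2^*-p)}\,b^{-p/(p_2^*-p)}$, and the arithmetic identity $p_2^*/(p_2^*-p)=N/(2p)$ then produces precisely the claimed exponent $\rho^{N(N-2p)/(2p)}$.

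The hard part will be securing a \emph{uniform} positive lower bound for $\|v\|_{p_2^*}^{p_2^*}$ on $C(\rho)$; without it, the factor $b^{-p/(p_2^*-p)}$ could blow up as $\rho\to 0$ and ruin the estimate. To obtain it I would combine the lower bound $\|v\|_p^p\ge p/(\lambda_k+c_1)$ (valid for $\rho\le 1$) with Hölder's inequality $\|v\|_p^p\le|\Omega|^{1-p/p_2^*}\|v\|_{p_2^*}^p$, yielding $\|v\|_{p_2^*}\ge b_0>0$ independently of $v\in C(\rho)$ and of $\rho\in(0,1]$. Once that uniform bound is in place, collecting the constants and taking the supremum over $v\in C(\rho)$ and $t\ge 0$ finishes the proof.
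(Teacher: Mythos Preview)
Your proposal is correct and follows essentially the same route as the paper's proof: expand $E_0(tv)$, use $C(\rho)\subset\Psi^{\lambda_k+c_1\rho^{N-2p}}$ to bound the $t^p$-coefficient (non-positive in the non-resonant case, $O(\rho^{N-2p})$ in the resonant case), and then maximize in $t$. The uniform lower bound for $\|v\|_{p_2^*}$ via H\"older and the $L^p$-estimate is exactly what the paper invokes by referring back to the proof of Lemma~\ref{lemmageometry1}, so your argument matches the paper's in every essential respect.
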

	
	\begin{proof}
		For any \(v\in C(\rho)\) and \(t\ge 0\),
		\[
		E_0(tv)=\frac{t^{p}}{p}\int_{\Omega}\bigl(|\Delta v|^{p}-\lambda|v|^{p}\bigr)\,dx
		-\frac{t^{p^{*}_2}}{p^{*}_2}\int_{\Omega}|v|^{p^{*}_2}\,dx,
		\]
		and using  \(C(\rho)\subset\Psi^{\lambda_k+c_1\rho^{\,N-2p}}\) (due to Lemma \ref{lemmadocro} and the choice of $\rho$ in \eqref{choiceofdeltaandrho}), we have
		\begin{equation}\label{eq:3.27}
			\frac1p\int_{\Omega}\bigl(|\nabla v|^{p}-\lambda|v|^{p}\bigr)\,dx
			=1-\frac{\lambda}{\Psi(v)}
			\;\le\;
			1-\frac{\lambda}{\lambda_k+c_1\rho^{\,N-2p}}.
		\end{equation}
		
		Thus, if $\lambda$ is not an eigenvalue, \eqref{choiceofdeltaandrho} gives \(E_0(tv)\le 0\).
		
		\medskip
		If \(\lambda=\lambda_k\) then, by \eqref{eq:3.27},
		\[
		\frac1p\int_{\Omega}\bigl(|\nabla v|^{p}-\lambda|v|^{p}\bigr)\,dx
		\;\le\;
		\frac{c_1\rho^{\,N-2p}}{\lambda_k+c_1\rho^{\,N-2p}}
		\;\le\;
		c_2\rho^{\,N-2p},
		\]
		where \(c_2=c_1/\lambda_k>0\).  Moreover,
		\[
		\frac1{p^{*}}\int_{\Omega}|v|^{p^{*}}\,dx\;\ge\;c_3>0,
		\]
		for some constant \(c_3>0\) (see the proof of Lemma \ref{lemmageometry1}).  Hence
		\[
		E_0(tv)\;\le\;c_2\rho^{\,N-2p}t^{p}-c_3\,t^{p^{*}_2},
		\]
		and maximization of the right-hand side over \(t\ge 0\) yield the stated upper bound.  This completes the proof. 
	\end{proof}
	
	\subsection{Construction of the concentration profiles}. 
	
	Let \(\phi\in\mathcal D^{2,p}(\mathbb R^N)\) be an extremal function that realizes \(S_{2,p}\), normalized in \(L^{p_2^*}(\mathbb R^N)\), radially symmetric, positive and such that $\phi(0)=1$. Lions in \cite[Corollary I.1]{Lions} proved that all of the positive solutions that realizes \(S_{2,p}\) are given by
	\begin{equation}\label{extremalfunctions}
		\phi_{\varepsilon,a}(x)=\varepsilon^{-\frac{N-2p}{p}}\phi\left(\frac{x-a}{\varepsilon}\right),\quad x\in\mathbb R^N,\quad a\in\mathbb R^N\quad\mbox{and}\quad\varepsilon>0.
	\end{equation}
	Notice that $S_{2,p}^{\frac{N-2p}{2p^2}}\phi_{\varepsilon,a}$ are exactly the regular positive solutions of $\Delta^2_pu=u^{p_2^*-1}$ in $\mathbb R^N$. 
	
	\medskip
	
	Moreover, to estimate the Laplacian of these extremal functions, we recall the theory developed in \cite{Hulshoff-Vander2, Lions}, which shows that \( -\Delta \phi = \widetilde{\phi}^{\frac{p}{p - 1} - 1} \) and \( -\Delta \widetilde{\phi} = \phi^{p_2^* - 1} \), where the function \( \widetilde{\phi} \) is the extremal function that realizes \( S_{2, \widetilde{p}} \), with \( \widetilde{p} = \frac{p_2^*}{p_2^* - 1} \). This follows from the identity
	\[
	\frac{1}{p/(p - 1)} + \frac{1}{p_2^*} = 1 - \frac{2}{N},
	\]
	which implies that \( \frac{p}{p - 1} = \widetilde{p}_2^{\,*} \). Therefore, \( \widetilde{\phi} \) can be viewed as the symmetric counterpart of \( \phi \), and the same structure described in \eqref{extremalfunctions} applies to \( \widetilde{\phi} \).
	
	\medskip
	Define now the following exponents:
	\begin{equation}\label{defdeA(N)eB(N)}
		A(N)=\frac{2N-2}{N}\quad\mbox{and}\quad B(N)=\frac{2N(N-1)}{N(N+2)-4}.
	\end{equation}
	These seemingly odd exponents are actually tied to the Serrin threshold \(N/(N-2)\): we have \(p=A(N)\) precisely when \(\frac{p}{p-1}-1=N/(N-2)\), and \(p=B(N)\) precisely when \(p_2^{*}-1=N/(N-2)\). Note also that $1<B(N)<A(N)<\sqrt{N/2}$ for all $N\geq 6$.
	Hulshof and van der Vorst \cite[Theorem 2]{Hulshoff-Vander2} proved that $\phi$ and $\widetilde\phi$ have the following asymptotic limits: there exist constants $a_1,a_2>0$ and $b_1,b_2>0$ such that
	\begin{align}\nonumber\lim_{r\rightarrow\infty}r^{\frac{N-2p}{p-1}}\phi(r)=a_1\quad\mbox{and}&\quad\lim_{r\rightarrow\infty}r^{N-2}\widetilde\phi(r)=b_2,&\mbox{if}&\quad A(N)<p\le\sqrt{\frac{N}{2}};\\\nonumber
		\label{asymptphi}\lim_{r\rightarrow\infty}\frac{r^{N-2}}{\log r}\phi(r)=a_1\quad\mbox{and}&\quad\lim_{r\rightarrow\infty}{r^{N-2}}\widetilde\phi(r)=b_2,&\mbox{if}&\quad p=A(N);
		\\\lim_{r\rightarrow\infty}r^{N-2}\phi(r)=b_1\quad\mbox{and}&\quad\lim_{r\rightarrow\infty}{r^{N-2}}\widetilde\phi(r)=b_2,&\mbox{if}&\quad B(N)<p<A(N);
		\\\nonumber\lim_{r\rightarrow\infty}r^{N-2}\phi(r)=b_1\quad\mbox{and}&\quad\lim_{r\rightarrow\infty}\frac{r^{N-2}}{\log r}\widetilde\phi(r)=a_2,&\mbox{if}&\quad p=B(N);
		\\\nonumber\lim_{r\rightarrow\infty}r^{N-2}\phi(r)=b_1\quad\mbox{and}&\quad\lim_{r\rightarrow\infty}r^{p_2^*(N-2)-N}\widetilde\phi(r)=a_2,&\mbox{if}&\quad 1<p<B(N),
	\end{align}
	where \(r=|x|\). 
	
	\medskip
	
	We will also need the asymptotic behavior of $\phi^\prime(r)$. In \cite[Equations (3.22) and (3.23)]{Hulshoff-Vander2} it is proven that
	\begin{equation}\label{asymptderivextremal}
		\lim_{r\rightarrow\infty}\frac{r\phi^\prime(r)}{\phi(r)}=\begin{cases}
			\displaystyle 2-\frac{N-2}{p-1}&\displaystyle\mbox{if}\quad A(N)<p\le\sqrt{\dfrac{N}{2}};\\[1.8ex]\displaystyle2-N&\displaystyle\mbox{if}\quad 1<p\leq A(N),
		\end{cases}
	\end{equation}

	In particular, notice that \(\phi\) behaves as \(|x|^{-\frac{N-2}{p-1}+2}\) when \(p>A(N)\) and as \(|x|^{-(N-2)}\) when \(p\leq A(N)\) as $|x|\rightarrow\infty$. So, in order to deal with problems such as \eqref{mainhombifinal} or \eqref{mainbifinal}, it is natural to consider 
	\[
	\frac{N-2p}{p-1}p\geq N,\quad\mbox{if}\quad p>A(N)\quad\mbox{or}\quad(N-2)p\geq N,\quad\mbox{if}\quad p\le A(N).
	\]
	Both situations require \(N\ge 6\), which can be easily checked. Also, the first inequality is equivalent to asking \(N\geq 2p^2\). 
	
	Now, consider \(\psi_\varepsilon=S_{2,p}^{\frac{N-2p}{2p^2}}\phi_{\varepsilon,0}\), where \(\phi_{\varepsilon,a}\) is given in \eqref{extremalfunctions}. Then, \(\int_{\mathbb R^N}|\Delta\psi_\varepsilon|^p\,dx=S_{2,p}\) and \(\int_{\mathbb R^N}\psi_\varepsilon^{p_2^*}\,dx=1\). Let \( \zeta : [0, \infty) \to [0, 1] \) be a smooth function such that \( \zeta(t) = 1 \) for \( t \leq 1/4 \) and \( \zeta(t) = 0 \) for \( t \geq 1/2 \), and let
	\begin{equation}\label{defofpsiandw}
	\psi_{\varepsilon, \rho}(x) = \zeta\left( \frac{|x|}{\rho} \right) \psi_\varepsilon(x), \qquad
	w_{\varepsilon, \rho}(x) = \frac{ \psi_{\varepsilon, \rho}(x) }{ \left( \displaystyle\int_{\mathbb{R}^N} \psi_{\varepsilon, \rho}^{p_2^*} \, dx \right)^{1/p_2^*} },
	\qquad 0 < \rho \leq \rho_0/2.
	\end{equation}
	Then \(\int_{\mathbb{R}^N} w_{\varepsilon, \rho}^{p_2^*} \, dx = 1\). Recalling the asymptotic behavior of $\phi$, we prove the following estimates:
	\begin{lemma}\label{estimatesfortalentifunctions}
		Suppose \(N\geq 6\) and  \(N\geq 2p^2\). Suppose $\varepsilon\leq\rho$ and  $\varepsilon$ small enough such that $|\log{\rho^{-1}\varepsilon}|\geq 1$. Then, there exist constants $C_1,C_2,C_3>0$ such that
		\[
		\left\| \Delta w_{\varepsilon, \rho} \right\|_{p}^{p} \leq
		\begin{cases}
			S_{2,p} + C_1\left(\rho^{-1}\varepsilon\right)^{\frac{N - 2p}{p - 1}}\,
			& \text{if}\quad\frac{2N - 2}{N}<p\le\sqrt{\frac{N}{2}} , \\[1.5ex]
			S_{2,p} + C_2|\log( \rho^{-1}\varepsilon)|^{\,p} \left(\rho^{-1}\varepsilon \right)^{N(p - 1)},
			& \text{if}\quad p = \frac{2N - 2}{N}, \\[1.5ex]
			S_{2,p} + C_3\left(\rho^{-1}\varepsilon \right)^{N(p - 1)},
			& \text{if}\quad 1<p < \frac{2N - 2}{N}.
		\end{cases}
		\]
	\end{lemma}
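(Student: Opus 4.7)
I shall estimate the numerator and denominator of
\[
\|\Delta w_{\varepsilon,\rho}\|_p^p=\|\Delta\psi_{\varepsilon,\rho}\|_p^p\,/\,\|\psi_{\varepsilon,\rho}\|_{p_2^*}^p
\]
separately and combine them by a first-order Taylor expansion of the denominator. Since $\zeta\equiv 1$ on $[0,1/4]$ and $\zeta',\zeta''$ are supported on $[1/4,1/2]$, $\psi_{\varepsilon,\rho}$ coincides with $\psi_\varepsilon$ on the ball $B_{\rho/4}(0)$, while the cut-off acts only on the annulus $A_\rho:=\{\rho/4\le|x|\le\rho/2\}$. Thus
\[
\int_{B_{\rho/4}}|\Delta\psi_{\varepsilon,\rho}|^p\,dx=\int_{B_{\rho/4}}|\Delta\psi_\varepsilon|^p\,dx\le\int_{\mathbb{R}^N}|\Delta\psi_\varepsilon|^p\,dx=S_{2,p},
\]
and on $A_\rho$ I use the Leibniz rule
\[
\Delta\psi_{\varepsilon,\rho}=\zeta(|x|/\rho)\,\Delta\psi_\varepsilon+\frac{2}{\rho}\zeta'(|x|/\rho)\,\frac{x}{|x|}\!\cdot\!\nabla\psi_\varepsilon+\psi_\varepsilon\,\Delta[\zeta(|x|/\rho)]
\]
together with the convexity bound $|a+b+c|^p\le C_p(|a|^p+|b|^p+|c|^p)$, reducing the annular contribution to $C_p(J_1+J_2+J_3)$ with
\[
J_1=\int_{A_\rho}|\Delta\psi_\varepsilon|^p\,dx,\qquad J_2=\rho^{-p}\!\int_{A_\rho}|\nabla\psi_\varepsilon|^p\,dx,\qquad J_3=\rho^{-2p}\!\int_{A_\rho}|\psi_\varepsilon|^p\,dx.
\]

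Each $J_i$ is then computed via the change of variables $y=x/\varepsilon$, transforming $A_\rho$ into $\tilde A_R:=\{R\le|y|\le 2R\}$ with $R:=\rho/(4\varepsilon)\gg 1$. The scaling in \eqref{extremalfunctions} absorbs the Jacobian and leaves
\[
J_1=c\!\int_{\tilde A_R}|\Delta\phi|^p\,dy,\qquad J_2=c\,(\varepsilon/\rho)^p\!\int_{\tilde A_R}|\nabla\phi|^p\,dy,\qquad J_3=c\,(\varepsilon/\rho)^{2p}\!\int_{\tilde A_R}|\phi|^p\,dy.
\]
Inserting the asymptotics of $\phi$ in \eqref{asymptphi}, of $\nabla\phi$ in \eqref{asymptderivextremal}, and of $\Delta\phi$ through the identity $-\Delta\phi=\widetilde\phi^{\widetilde p-1}$ together with the asymptotics of $\widetilde\phi$ in \eqref{asymptphi}, each integrand becomes a power of $r$ (with an extra $\log r$ precisely when $p=A(N)$). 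Multiplying by $r^{N-1}$ and integrating from $R$ to $2R$ produces a single negative power of $R$, which recombines with the $(\varepsilon/\rho)^p$ and $(\varepsilon/\rho)^{2p}$ prefactors to collapse, in each of the three regimes, to the same dominant rate: $(\varepsilon/\rho)^{(N-2p)/(p-1)}$ for $A(N)<p\le\sqrt{N/2}$ and $(\varepsilon/\rho)^{N(p-1)}$ for $1<p\le A(N)$. At the threshold $p=A(N)$ the $\log r$ from the expansion of $\phi$ transfers to the integrands of $J_2$ and $J_3$ and survives the annular integration, producing the $|\log(\varepsilon/\rho)|^p$ factor; $J_1$ in that regime has no log (since $\widetilde\phi$ has pure polynomial decay at $p=A(N)$) and is therefore dominated.

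The denominator is handled by the simpler bound
\[
\|\psi_{\varepsilon,\rho}\|_{p_2^*}^{p_2^*}\;\ge\;\int_{B_{\rho/4}}\psi_\varepsilon^{p_2^*}\,dx\;=\;1-\int_{|x|\ge\rho/4}\psi_\varepsilon^{p_2^*}\,dx,
\]
and the same change of variables together with the decay of $\phi^{p_2^*}$—which is strictly faster than that of $\phi^p$, $|\nabla\phi|^p$ and $|\Delta\phi|^p$—shows that this tail is of strictly higher order in $\varepsilon/\rho$ than any of the $J_i$. A first-order Taylor expansion of $\|\psi_{\varepsilon,\rho}\|_{p_2^*}^{-p}$ then absorbs the denominator correction into the numerator estimate and yields the three bounds claimed. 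The main obstacle is purely the book-keeping: verifying in each regime that the exponents of $\varepsilon/\rho$ coming from $J_1$, $J_2$ and $J_3$ genuinely collapse to the same dominant rate, and checking that the logarithmic factor appears where—and only where—the asymptotic expansion of $\phi$ (equivalently, the Serrin threshold $p=A(N)$) forces it. The hypothesis $N\ge 2p^2$ is precisely what guarantees convergence of the relevant tail integrals at the rate needed to keep the total error strictly below $S_{2,p}$.
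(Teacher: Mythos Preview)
Your approach is correct and is in fact more elementary than the paper's. The paper keeps the Leibniz decomposition $\Delta\psi_{\varepsilon,\rho}=i_1+c(i_2+i_3)$ with $i_1=\zeta(\cdot/\rho)\Delta\psi_\varepsilon$ \emph{globally} on $\mathbb{R}^N$, writes $\int|i_1|^p=S_{2,p}+\int(\zeta^p-1)|\Delta\psi_\varepsilon|^p$, and then must control $\int|i_1+c(i_2+i_3)|^p-\int|i_1|^p$ via the pointwise Brezis--Nirenberg inequality for $||\alpha+\beta|^p-|\alpha|^p|$; this forces a case split at $p=3$ and separate H\"older estimates for the mixed integrals $\int|i_1|^{p-1}|i_2+i_3|$, $\int|i_1||i_2+i_3|^{p-1}$, $\int|i_1|^{p-2}|i_2+i_3|^2$, etc., each producing its own exponent to be compared across the regimes $p\gtrless A(N),B(N)$. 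Your device of first splitting the domain into $B_{\rho/4}$ (where $\psi_{\varepsilon,\rho}=\psi_\varepsilon$, so the contribution is simply $\le S_{2,p}$) and the annulus $A_\rho$ is the key economy: on $A_\rho$ even the ``main'' term $\zeta\Delta\psi_\varepsilon$ is already of error size, so the crude bound $|a+b+c|^p\le C_p(|a|^p+|b|^p+|c|^p)$ loses nothing and the entire cross-term analysis disappears. Both routes yield identical final exponents. Two minor points: the identity you cite should read $-\Delta\phi=\widetilde\phi^{\,1/(p-1)}$ (equivalently $\widetilde\phi^{\,\widetilde p_2^*-1}$), not $\widetilde\phi^{\,\widetilde p-1}$ with the paper's $\widetilde p=p_2^*/(p_2^*-1)$; and the hypothesis $N\ge 2p^2$ is simply the range in which the asymptotics~\eqref{asymptphi} are stated, not a separate convergence condition for this lemma.
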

	\begin{proof} The proof of this lemma is based on ideas from \cite[Lemma 6.2]{dosSantos1}.
		We begin by estimating \(\|\psi_{\varepsilon,\rho}\|^{p_2^*}_{p_2^*}\). We have
		\begin{align*}
			\int_{\mathbb R^N}\psi^{p_2^*}_{\varepsilon,\rho}\,dx=&\int_{\mathbb R^N}\psi^{p_2^*}_{\varepsilon}\,dx+\int_{\mathbb R^N}\left(\left(\zeta\left(\frac{|x|}{\rho}\right)\right)^{p_2^*}-1\right)\psi^{p_2^*}_{\varepsilon}\,dx\\
			=&1+\rho^N\varepsilon^{-N}S_{2,p}^{N/2p}\int_{\mathbb R^N\backslash B_{1/4}(0)}\left(\zeta^{p_2^*}(|x|)-1\right)\phi^{\,p_2^*}\left(\frac{\rho|x|}{\varepsilon}\right)\,dx.
		\end{align*}
		This last integral is estimated using the asymptotic behavior of $\phi$ given in \eqref{asymptphi}. We separate the cases of $p$ accordingly and reach
		\begin{equation}\label{estimatep2*}
			\int_{\mathbb R^N}\psi^{p_2^*}_{\varepsilon,\rho}\,dx=
			\begin{cases}
				1 + \rho^{-\frac{N}{p-1}}O\left( \varepsilon^{\frac{N}{p-1}} \right),
				& \text{if}\quad A(N)<p\le\sqrt{\frac{N}{2}} , \\[1.5ex]
				1 + |\log(\rho^{-1}\varepsilon)|^{p_2^*} \rho^{-\frac{N}{p-1}}O\left( \varepsilon^{\frac{N}{p-1}} \right),
				& \text{if}\quad p = A(N), \\[1.5ex]
				1 + \rho^{-\left(p_2^*(N-2)-N\right)}O\left( \varepsilon^{p_2^*(N-2)-N} \right),
				& \text{if}\quad 1<p < A(N),
			\end{cases}
		\end{equation}
		where $A(N)$ is given in \eqref{defdeA(N)eB(N)}.
		To estimate the logarithmic factor, we used
		\[
		\bigl\lvert\log(\rho |x|/\varepsilon)\bigr\rvert
		=\bigl\lvert\log|x|-\log(\varepsilon/\rho)\bigr\rvert
		\le \bigl\lvert\log|x|\bigr\rvert
		+\bigl\lvert\log(\rho/\varepsilon)\bigr\rvert
		\le \bigl\lvert\log(\rho/\varepsilon)\bigr\rvert
		\bigl(1+\lvert\log|x|\rvert\bigr),
		\]
		which is valid whenever \(\lvert\log(\rho/\varepsilon)\rvert\ge 1\).  %This allows us to get rid of $\varepsilon$ and $\rho$ and the term $(1+|\log(|x|)|)$ will not interfere in the integrability of the remaining function.
		
		\medskip
		
		Now, we focus on estimating the Laplacian of this function. We shall use the following identity
		\begin{align}\nonumber
			\Delta \psi_{\varepsilon, \rho}(x) =&\, \Delta\left[\zeta\left( \frac{x}{\rho} \right) \psi_\varepsilon(x)\right]\\\nonumber=\,&\zeta\left( \frac{x}{\rho}\right)\,  
			\Delta \psi_\varepsilon\left(x\right)
			+  S_{2,p}^{\frac{N-2p}{2p^2}}\left(
			2 \rho^{-1}\varepsilon^{-\frac{N}{p_2^*}-1}\nabla \zeta\left( \frac{x}{\rho}\right)\,  \nabla \varphi\left( \frac{x}{\varepsilon} \right)
			+ \rho^{-2}\varepsilon^{-\frac{N}{p_2^*}}\varphi\left( \frac{x}{\varepsilon} \right) \Delta \zeta\left( \frac{x}{\rho}\right)
			\right)\\\label{beginshere}
			=&\,i_1+ S_{2,p}^{\frac{N-2p}{2p^2}}(i_2+i_3).
		\end{align}
		and estimate each of the integrals of these terms separetely. 
		
		The Laplacian of $\phi$ is estimated using its extremal counterpart $\widetilde\phi$, using the identity \( -\Delta \phi = \widetilde{\phi}^{\frac{1}{p - 1}} \). Starting with $i_1$:
		\begin{align*}
			\int_{\mathbb R^N}\left|\zeta\left( \frac{x}{\rho}\right)\,  
			\Delta \psi_\varepsilon\left(x  \right)\right|^p\,dx=&\int_{\mathbb R^N}\left|\Delta \psi_\varepsilon\left(x  \right)\right|^p\,dx+\int_{\mathbb R^N}\left(\left(\zeta\left(\frac{|x|}{\rho}\right)\right)^{p}-1\right)\left|\Delta \psi_\varepsilon\left(x  \right)\right|^p\,dx\\
			=&S_{2,p}+\rho^N\varepsilon^{-N}S_{2,p}^{(N-2p)/2p}\int_{\mathbb R^N\backslash B_{1/4}(0)}\left(\zeta^{p}(|x|)-1\right)\left|\Delta\phi\left(\frac{\rho|x|}{\varepsilon}\right)\right|^p\,dx\\
			=&S_{2,p}+\rho^N\varepsilon^{-N}S_{2,p}^{(N-2p)/2p}\int_{\mathbb R^N\backslash B_{1/4}(0)}\left(\zeta^{p}(|x|)-1\right)\left|\widetilde\phi\left(\frac{\rho|x|}{\varepsilon}\right)\right|^\frac{p}{p-1}\,dx.
		\end{align*}
		And now we apply the limits given in \eqref{asymptphi} for $\widetilde\phi$ to estimate this last integral. We get, then:
		\begin{equation}\label{estimatelaplacianp}
			\int_{\mathbb R^N}\left|\zeta\left( \frac{x}{\rho}\right)\,  
			\Delta \psi_\varepsilon\left(x  \right)\right|^p\,dx=
			\begin{cases}
				S_{2,p} + \rho^{-\frac{N - 2p}{p - 1}}O\left( \varepsilon^{\frac{N - 2p}{p - 1}} \right),
				& \text{if}\quad B(N)<p\le\sqrt{\frac{N}{2}} , \\[1.5ex]
				S_{2,p} + |\log(\rho^{-1}\varepsilon)|^{\frac{p}{p-1}} \rho^{-\frac{N - 2p}{p - 1}}O\left( \varepsilon^{\frac{N - 2p}{p - 1}} \right),
				& \text{if}\quad p = B(N), \\[1.5ex]
				S_{2,p} + \rho^{-N(p_2^*-1)}O\left( \varepsilon^{N(p_2^*-1)} \right),
				& \text{if}\quad 1<p < B(N),
			\end{cases}
		\end{equation}
		where $B(N)$ is given in \eqref{defdeA(N)eB(N)}. For later purpose, let us also highlight the part of these estimates. We also have proved that:
		\begin{equation}\label{estimatelaplacianpoutsideballs}
			\int_{\mathbb R^N\backslash B_{\rho/4}(0)}\left|\zeta\left( \frac{x}{\rho}\right)\,  
			\Delta \psi_\varepsilon\left(x  \right)\right|^p\,dx=
			\begin{cases}
				\rho^{-\frac{N - 2p}{p - 1}}O\left( \varepsilon^{\frac{N - 2p}{p - 1}} \right),
				& \text{if}\quad B(N)<p\le\sqrt{\frac{N}{2}} , \\[1.5ex]
				|\log(\rho^{-1}\varepsilon)|^{\frac{p}{p-1}} \rho^{-\frac{N - 2p}{p - 1}}O\left( \varepsilon^{\frac{N - 2p}{p - 1}} \right),
				& \text{if}\quad p = B(N), \\[1.5ex]
				\rho^{-N(p_2^*-1)}O\left( \varepsilon^{N(p_2^*-1)} \right),
				& \text{if}\quad 1<p < B(N).
			\end{cases}
		\end{equation}
		Next we estimate the integrals of \(i_{2}\) and \(i_{3}\).  In both cases the integration is taken outside a ball, because each term contains derivatives of the cut‐off function \(\zeta\) and we have \(\zeta\equiv 1\) in \(B_{1/4}(0)\). For \(i_2\), we have  
		\begin{align*}
			\int_{\mathbb R^N}\left|2 \rho^{-1}\varepsilon^{-\frac{N}{p_2^*}-1}\nabla \zeta\left( \frac{x}{\rho}\right)\,  \nabla \varphi\left( \frac{x}{\varepsilon} \right)\right|^p\,dx=2^p\rho^{N-p}\varepsilon^{-N+p}\int_{B_{1/2}(0)\backslash B_{1/4}(0)}\left|\nabla\zeta(x)\nabla\phi\left(\frac{\rho |x|}{\varepsilon}\right)\right|^pdx
		\end{align*}
		Now, observe that, due to \eqref{asymptderivextremal}, the gradient of $\phi$ satisfies $|\nabla\phi(x)|\leq C|\phi(x)|/|x|$ for all $x\neq 0$, for some constant $C>0$. So, we can go back to the estimates for $\phi$ in \eqref{asymptphi} and reach
		\begin{equation}\label{estimategradientp}
			\int_{\mathbb R^N}\left|2 \rho^{-1}\varepsilon^{-\frac{N}{p_2^*}-1}\nabla \zeta\left( \frac{x}{\rho}\right)\,  \nabla \varphi\left( \frac{x}{\varepsilon} \right)\right|^p\,dx=
			\begin{cases}
				\rho^{-\frac{N-2p}{p-1}}O\left( \varepsilon^{\frac{N-2p}{p-1}} \right),
				& \text{if}\quad A(N)<p\le\sqrt{\frac{N}{2}} , \\[1.5ex]
				|\log(\rho^{-1}\varepsilon)|^{p} \rho^{-\frac{N-2p}{p-1}}O\left( \varepsilon^{\frac{N-2p}{p-1}} \right),
				& \text{if}\quad p = A(N), \\[1.5ex]
				\rho^{-N(p-1)}O\left( \varepsilon^{N(p-1)} \right),
				& \text{if}\quad 1<p < A(N).
			\end{cases}
		\end{equation}
		The estimate for \(\int_{\mathbb{R}^{N}} |i_{3}|^{p}\,dx\) is obtained exactly in the same way. In \(i_{3}\) the powers of \(\rho\) and \(\varepsilon\) differ by one compared to those of \(i_{2}\); this difference is balanced by using the bound for \(\phi\) itself instead of the gradient estimate we needed for \(\nabla\phi\) before. Thus, we have
		\begin{equation}\label{estimatewithphip}
			\int_{\mathbb R^N}\left|\rho^{-2}\varepsilon^{-\frac{N}{p_2^*}}\varphi\left( \frac{x}{\varepsilon} \right) \Delta \zeta\left( \frac{x}{\rho}\right)\right|^p\,dx=
			\begin{cases}
				\rho^{-\frac{N-2p}{p-1}}O\left( \varepsilon^{\frac{N-2p}{p-1}} \right),
				& \text{if}\quad A(N)<p\le\sqrt{\frac{N}{2}} , \\[1.5ex]
				|\log(\rho^{-1}\varepsilon)|^{p} \rho^{-\frac{N-2p}{p-1}}O\left( \varepsilon^{\frac{N-2p}{p-1}} \right),
				& \text{if}\quad p = A(N), \\[1.5ex]
				\rho^{-N(p-1)}O\left( \varepsilon^{N(p-1)} \right),
				& \text{if}\quad 1<p < A(N).
			\end{cases}
		\end{equation}
		
		We will make use of the following numerical estimates found in \cite{Brezis-Nirenberg}: For all $\alpha,\beta\in\mathbb R$ and all $p>1$, there exists a constant $C>0$ such that
		\begin{equation}\label{numericalestimate}
			\left\{
			\begin{aligned}
				&\text{If } 1< p\le 3,\text{ then}\\[2pt]
				&\qquad 
				\Bigl|\;|\,\alpha+\beta|^{\,p}-|\alpha|^{\,p}-|\beta|^{\,p}
				-p\alpha\beta\bigl(|\alpha|^{\,p-2}+|\beta|^{\,p-2}\bigr)\;\Bigr|
				\\
				&\qquad\qquad
				\le 
				\begin{cases}
					C\,|\alpha|\,|\beta|^{\,p-1}, &\text{if }|\alpha|\ge|\beta|,\\[4pt]
					C\,|\alpha|^{\,p-1}|\beta|, &\text{if }|\alpha|\le|\beta|;
				\end{cases}
				\\[12pt]
				&\text{If } p\ge 3,\text{ then}\\[2pt]
				&\qquad 
				\Bigl|\;|\,\alpha+\beta|^{\,p}-|\alpha|^{\,p}-|\beta|^{\,p}
				-p\alpha\beta\bigl(|\alpha|^{\,p-2}+|\beta|^{\,p-2}\bigr)\;\Bigr|
				\\
				&\qquad\qquad
				\le 
				C\bigl(|\alpha|^{\,p-2}\beta^{2}+\alpha^{2}|\beta|^{\,p-2}\bigr).
			\end{aligned}
			\right.
		\end{equation}
		Considering $\alpha=i_1$ and $\beta=S_{2,p}^{\frac{N-2p}{2p^2}}(i_2+i_3)$, we still need to estimate mixed terms such as 
		\(\int |i_{1}|^{p-1} |i_{2}|\), \(\int |i_{1}|^{p-1} |i_{3}|\), \(\int |i_{1}|\,|i_{2}|^{p-1}\), and the remaining similar products in \eqref{numericalestimate}. We handle the mixed terms with H\"older’s inequality, combining it with the bounds already obtained in \eqref{estimatelaplacianpoutsideballs}, \eqref{estimategradientp} and \eqref{estimatewithphip}. The reasoning is identical for every combination. Observe that the bound for \(i_{1}\) involves the constant \(B(N)\), while the bounds for the other terms involve \(A(N)\). Considering the integral $\int|i_1|^{p-1}|i_2|$, notice that we can perform the following general scheme for all admissible $p$:
		\begin{align*}
			\int_{\mathbb R^N}|i_1|^{p-1}|i_2|\,dx\leq&\left(\int_{\mathbb R^N\backslash B_{\rho/4}(0)}\left|\zeta\left( \frac{x}{\rho}\right)\,  
			\Delta \psi_\varepsilon\left(x  \right)\right|^p\,dx\right)^{\frac{p-1}{p}}\left(\int_{\mathbb R^N}\left|2 \rho^{-1}\varepsilon^{-\frac{N}{p_2^*}-1}\nabla \zeta\left( \frac{x}{\rho}\right)\,  \nabla \varphi\left( \frac{x}{\varepsilon} \right)\right|^p\,dx\right)^\frac1p\\
			\leq&C\left(|\log(\rho^{-1}\varepsilon)|^{k_1}\rho^{-k_2}\varepsilon^{k_2}\right)^{\frac{p-1}{p}}\left(|\log(\rho^{-1}\varepsilon)|^{k_3}\rho^{-k_4}\varepsilon^{k_4}\right)^{\frac{1}{p}}\\
			=&C|\log(\rho^{-1}\varepsilon)|^{\frac{k_1(p-1)+k_3}{p}}\rho^{-\frac{k_2(p-1)+k_4}{p}}\varepsilon^{\frac{k_2(p-1)+k_4}{p}},
		\end{align*}
		where $k_1,k_3\ge 0$ and $k_2,k_4>0$ are constants depending on $p$, given in \eqref{estimatelaplacianpoutsideballs} and \eqref{estimategradientp}.
		When \(p>A(N)\) we have \(k_{1}=k_{3}=0\), \(k_{2}=\tfrac{N-2}{p-1}-2\) and \(k_{4}=\tfrac{N-2p}{p-1}\).  In this case, the mixed term is bounded by \(\int |i_{1}|^{p-1}|i_{2}| = \rho^{-\frac{N-2p}{p-1}}\,O\!\bigl(\varepsilon^{\frac{N-2p}{p-1}}\bigr)\).  
		Since the estimates for \(i_{2}\) and \(i_{3}\) coincide, the same bound applies to the integral involving \(i_{3}\).  Proceeding in the same way for the other ranges of \(p\) gives the full set of estimates listed below.
		\begin{equation}\label{estimatewithi_1p-1i_2ori_3}
			\int_{\mathbb R^N}|i_1|^{p-1}|i_2+i_3|\,dx=
			\begin{cases}
				\rho^{-\frac{N-2p}{p-1}}O\left( \varepsilon^{\frac{N-2p}{p-1}} \right),
				& \text{if}\quad A(N)<p\le\sqrt{\frac{N}{2}} , \\[1.5ex]
				|\log(\rho^{-1}\varepsilon)| \rho^{-\frac{N-2p}{p-1}}O\left( \varepsilon^{\frac{N-2p}{p-1}} \right),
				& \text{if}\quad p = A(N), \\[1.5ex]
				\rho^{-(N-2)}O\left( \varepsilon^{N-2} \right),
				& \text{if}\quad B(N)<p<A(N) , \\[1.5ex]
				|\log(\rho^{-1}\varepsilon)|\rho^{-(N-2)}O\left( \varepsilon^{N-2} \right),
				& \text{if}\quad p=B(N),\\[1.5ex]
				\rho^{-(p_2^*(N-2)-N)}O\left( \varepsilon^{p_2^*(N-2)-N} \right),
				& \text{if}\quad 1<p<B(N).
			\end{cases}
		\end{equation}
		Similarly, we reach the estimates for the integrals of $|i_1||i_2|^{p-1}$ and $|i_1||i_3|^{p-1}$. The general exponents to deal with now are 
		\begin{align*}
			\int_{\mathbb R^N}|i_1||i_2|^{p-1}dx+\!\int_{\mathbb R^N}|i_1||i_3|^{p-1}dx\leq C|\log(\rho^{-1}\varepsilon)|^{\frac{k_1+k_3(p-1)}{p}}\rho^{-\frac{k_2+k_4(p-1)}{p}}\varepsilon^{\frac{k_2+k_4(p-1)}{p}},
		\end{align*}
		which lead to
		\begin{equation}\label{estimatewithi_1i_2ori_3p-1}
			\int_{\mathbb R^N}\!\!\!|i_1||i_2+i_3|^{p-1}dx=
			\begin{cases}
				\rho^{-\frac{N-2p}{p-1}}O\left( \varepsilon^{\frac{N-2p}{p-1}} \right),
				& \text{if}\quad A(N)<p\le\sqrt{\frac{N}{2}} , \\[1.5ex]
				|\log(\rho^{-1}\varepsilon)|^{p-1} \rho^{-\frac{N-2p}{p-1}}O\left( \varepsilon^{\frac{N-2p}{p-1}} \right),
				& \text{if}\quad p = A(N), \\[1.5ex]
				\rho^{-\frac{1}p\left(\frac{N-2p}{p-1}+N(p-1)^2\right)}O\left( \varepsilon^{\frac{1}p\left(\frac{N-2p}{p-1}+N(p-1)^2\right)} \right),
				& \text{if}\quad B(N)<p<A(N) , \\[1.5ex]
				|\log(\rho^{-1}\varepsilon)|^{\frac{1}{p-1}}\rho^{-N(p-1)-2p_2^*}O\left( \varepsilon^{N(p-1)+2p_2^*} \right),
				& \text{if}\quad p=B(N),\\[1.5ex]
				\rho^{-N(p-1)-2p_2^*}O\left( \varepsilon^{N(p-1)+2p_2^*} \right),
				& \text{if}\quad 1<p<B(N).
			\end{cases}
		\end{equation}
		%Notice that \[\frac{1}p\left(\frac{N-2p}{p-1}+N(p-1)^2\right)=\left(\frac{1}{p}\frac{N-2p}{p-1}+\frac{p-1}{p_2^*}(p_2^*(N-2)-N)\right)\]
		
		We still need to estimate the integrals of $|i_1|^{p-2}|i_2+i_3|^2$ and $|i_1|^{2}|i_2+i_3|^{p-2}$,
which arise only when \(p>3\).  Because \(A(N)<3\), the relevant range is now \(A(N)<p\le\sqrt{N/2}\).  In this case, we avoid Hölder inequality and instead bound the integrands directly. Recalling that \( -\Delta \phi = \widetilde{\phi}^{\frac{1}{p - 1}}\) and using \eqref{asymptphi} and \eqref{asymptderivextremal}, we get
		\begin{align*}
			\int_{\mathbb R^N} \!\!|i_1|^{p-2}|i_2+i_3|^2dx\leq&\,C\rho^N\varepsilon^{-\frac{N}{p}(p-2)}\!\!\!\int_{B_{\frac12}(0)\backslash B_{\frac14}(0)}\!\left| \widetilde\phi\left(\frac{\rho x}{\varepsilon}\right)\!\right|^{\frac{p-2}{p-1}}\!\left| \rho^{-1}\varepsilon^{\!-\frac{N}{p_2^*}-1}\nabla \varphi\left( \frac{\rho x}{\varepsilon} \right)
			+ \rho^{-2}\varepsilon^{\!-\frac{N}{p_2^*}}\varphi\left( \frac{\rho x}{\varepsilon} \right)\!\right|^2\!\!dx\\
			\leq&\,C\rho^{N+(2-N)\frac{p}{p-1}}\varepsilon^{-N+(N-2)\frac{p}{p-1}}\int_{B_{\frac12}(0)\backslash B_{\frac14}(0)}\!\!\!\!\!\!\left|x \right|^{(2-N)\frac{p-2}{p-1}}\left(|x|^{2-2\frac{N-2}{p-1}}\,   
			+ |x|^{4-2\frac{N-2}{p-1}}\right)dx\\
			\leq&\,C\rho^{-\frac{N-2p}{p-1}}\varepsilon^{\frac{N-2p}{p-1}}.
		\end{align*}
		We reach these same exponents when dealing with $|i_1|^{2}|i_2+i_3|^{p-2}$. Therefore, we may conclude
		\begin{equation}\label{lastestimatemixed}
			\int_{\mathbb R^N}|i_1|^{p-2}|i_2+i_3|^2dx+\int_{\mathbb R^N}|i_1|^{2}|i_2+i_3|^{p-2}dx=\rho^{-\frac{N-2p}{p-1}}O(\varepsilon^{\frac{N-2p}{p-1}}),\quad\mbox{if}\quad A(N)<p\le\sqrt{\frac N2}.
		\end{equation}
		We are ready to go back to \eqref{beginshere}. Using \eqref{numericalestimate}, we see that
		\begin{equation*}
			\begin{array}{l}
				\displaystyle\int_{\mathbb R^N}\left||\Delta \psi_{\varepsilon, \rho}(x)|^p-\left|\zeta\left( \frac{x}{\rho}\right)\,  
				\Delta \psi_\varepsilon\left(x  \right)\right|^p\right|dx=\int_{\mathbb R^N}\left||i_1+ S_{2,p}^{\frac{N-2p}{2p^2}}(i_2+i_3)|^p-\left|i_1\right|^p\right|dx\leq\\\leq\begin{cases}
					\displaystyle C\int_{\mathbb R^N}\!\!\!\!\left(|i_2|^p+|i_3|^p+|i_1|^{p-1}|i_2+i_3|+|i_1||i_2+i_3|^{p-1}\right)dx&\mbox{if }1<p\le 3,\medskip\\\displaystyle C\int_{\mathbb R^N}\!\!\!\!\left(|i_2|^p\!+\!|i_3|^p+|i_1|^{p-1}|i_2+i_3|\!+\!|i_1||i_2\!+\!i_3|^{p-1}\!+\!|i_1|^{p-2}|i_2\!+\!i_3|^2\!+\!|i_1|^2|i_2\!+\!i_3|^{p-2}\right)dx&\mbox{if }p>3.
				\end{cases}
			\end{array}
		\end{equation*}
		In every integral estimate—namely \eqref{estimatelaplacianp}, \eqref{estimategradientp}, \eqref{estimatewithphip}, \eqref{estimatewithi_1p-1i_2ori_3}, \eqref{estimatewithi_1i_2ori_3p-1} and \eqref{lastestimatemixed}—the factors involving \(\rho\) and \(\varepsilon\) appear in the form \(\rho^{-k}\varepsilon^{k}\).   The last step to estimate \(\int|\Delta \psi_{\varepsilon, \rho}|^p\) is to list the corresponding exponents \(k\) for every range of \(p\) and pick the smallest one in each case; that smallest exponent controls the overall estimate, since we are assuming $\varepsilon\rho^{-1}\leq 1$. So,
		\begin{itemize}
			\item Case \(A(N)<p\le\sqrt{\frac{N}{2}}\): the powers of $\rho^{-1}\varepsilon$ that appear in all the estimates are the same, namely $(N-2p)/(p-1)$.
			\item \textbf{Case \(\displaystyle p = A(N)\).}
			In this situation every power of \(\rho^{-1}\varepsilon\) is still
			\(\frac{N-2p}{p-1}\), but some terms gain extra factors
			\( \lvert\log(\rho^{-1}\varepsilon)\rvert \),
			\( \lvert\log(\rho^{-1}\varepsilon)\rvert^{\,p-1} \) or
			\( \lvert\log(\rho^{-1}\varepsilon)\rvert^{\,p} \).
			Because we are assuming \( \lvert\log(\rho^{-1}\varepsilon)\rvert \le 1 \),
			the factor with exponent \(p\) dominates,
			and therefore controls all the remaining logarithmic corrections.
			\item Case \(B(N)<p<A(N)\): Here, the powers are 
			\[
			\frac{N-2p}{p-1},\quad N(p-1),\quad N-2,\quad \frac{1}{p}\left(\frac{N-2p}{p-1}+N(p-1)^2\right).
			\]
			In this case $N(p-1)$ is the smallest. 
			\item  Case \(p=B(N)\): some powers of $\varepsilon$ are also multiplied by powers of $|log(\rho^{-1}\varepsilon)|$ but all exponents of them are greater than $N(p-1)$, which remains the smallest one.
			\item  Case \(1<p<B(N)\): Now the powers of $\rho^{-1}\varepsilon$ are:
			\[
			N(p_2^*-1),\quad N(p-1),\quad p_2^*(N-2)-N,\quad N(p-1)+2p_2^*,
			\]
			which still gives $N(p-1)$ as the correct one to choose. 
		\end{itemize}
		So, we have proved that: \
		
		\begin{equation}\label{estimatelaplacianpsiepsilonp}
			\int_{\mathbb R^N}\left|\Delta \psi_{\varepsilon, \rho}\right|^p\,dx\leq
			\begin{cases}
				S_{2,p}+C\left( \rho^{-1}\varepsilon \right)^{\frac{N-2p}{p-1}},
				& \text{if}\quad A(N)<p\le\sqrt{\frac{N}{2}} , \\[1.5ex]
				S_{2,p}+C|\log(\rho^{-1}\varepsilon)|^{p} \left( \rho^{-1}\varepsilon\right)^{\frac{N-2p}{p-1}} ,
				& \text{if}\quad p = A(N), \\[1.5ex]
				S_{2,p}+C\left( \rho^{-1}\varepsilon \right)^{N(p-1)},
				& \text{if}\quad 1<p < A(N).
			\end{cases}
		\end{equation}
		Finally, we estimate $\|\Delta w_{\varepsilon,\rho}\|^p_p$ by means of \eqref{estimatep2*} and \eqref{estimatelaplacianpsiepsilonp}.
	\end{proof}

	We also need to estimate the $L^p$ norms of $w_{\varepsilon, \rho}$. This is the subject of the next lemma.
	\begin{lemma}\label{lemmaLpnormestimate} Suppose \(N\geq 6\) and  \(N\geq 2p^2\). Suppose $4\varepsilon<\rho$. Then, there exist constants $C_1,C_2>0$ such that
		\begin{equation*}
			\left\| w_{\varepsilon, \rho} \right\|_{p}^{p} \geq
			\begin{cases}
				
				C_1 \varepsilon^{\,2p}|\log (\rho^{-1}\varepsilon)|,
				& \text{if}\quad  p=\sqrt{\frac{N}{2}}, \\[2ex]
				
				C_2 \varepsilon^{\,2p},
				& \text{if}\quad 1< p<\sqrt{\frac{N}{2}}.
			\end{cases}
		\end{equation*}
	\end{lemma}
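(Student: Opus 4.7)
The plan is to reduce the estimate to a computation for the fixed extremal profile $\phi$ on a large but finite ball, by first disposing of the denominator in the definition \eqref{defofpsiandw} of $w_{\varepsilon,\rho}$. By the estimate \eqref{estimatep2*} obtained in the proof of Lemma~\ref{estimatesfortalentifunctions}, the denominator $\bigl(\int_{\mathbb R^N}\psi_{\varepsilon,\rho}^{p_2^*}\bigr)^{1/p_2^*}$ is bounded above by a constant independent of $\varepsilon,\rho$ (it actually tends to $1$ as $\varepsilon/\rho\to 0$). Hence it is enough to show the claimed lower bound for $\|\psi_{\varepsilon,\rho}\|_p^p$.

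Because $\zeta\equiv 1$ on $[0,1/4]$, we have $\psi_{\varepsilon,\rho}\equiv\psi_\varepsilon$ on $B_{\rho/4}(0)$, so
\[
\|\psi_{\varepsilon,\rho}\|_p^p
\;\ge\;\int_{B_{\rho/4}(0)}|\psi_\varepsilon(x)|^{p}\,dx
\;=\;S_{2,p}^{(N-2p)/(2p)}\,\varepsilon^{\,2p}\int_{B_{R}(0)}\phi(y)^{p}\,dy,
\qquad R:=\frac{\rho}{4\varepsilon},
\]
where I have used the scaling $\phi_{\varepsilon,0}(x)=\varepsilon^{-(N-2p)/p}\phi(x/\varepsilon)$ from \eqref{extremalfunctions}, together with the change of variables $y=x/\varepsilon$. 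By the hypothesis $4\varepsilon<\rho$ we have $R>1$, so the task reduces to estimating $\int_{B_R}\phi^p\,dy$ from below for $R$ large.

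Split the integral as $\int_{B_1}\phi^p+\int_{B_R\setminus B_1}\phi^p$. The first piece is a strictly positive constant depending only on $\phi$. For the second piece I use the asymptotics \eqref{asymptphi}. In the range $N/(N-2)<p\le\sqrt{N/2}$ two sub-cases occur, according to whether $p>A(N)$ or $p\le A(N)$ (the borderline $p=A(N)$ being handled with the obvious log correction):
\begin{itemize}
\item If $A(N)<p\le\sqrt{N/2}$, then $\phi(y)\ge c\,|y|^{-(N-2p)/(p-1)}$ for $|y|\ge 1$, so
\[
\int_{B_R\setminus B_1}\phi^{p}\,dy
\;\ge\; c'\int_{1}^{R} r^{\,N-1-p(N-2p)/(p-1)}\,dr.
\]
The exponent $N-1-p(N-2p)/(p-1)$ equals $-1$ precisely when $N=2p^{2}$, is strictly larger when $N<2p^{2}$ (not our case) and strictly smaller when $N>2p^{2}$.
\item If $N/(N-2)<p\le A(N)$, then $\phi(y)\ge c\,|y|^{-(N-2)}$ for $|y|\ge 1$ (or $\ge c\,\log|y|\cdot|y|^{-(N-2)}$ at $p=A(N)$), and the integrand in polar coordinates is $r^{\,N-1-p(N-2)}$. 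The assumption $p>N/(N-2)$ makes this exponent strictly less than $-1$, so the integral on $[1,\infty)$ converges to a positive constant.
\end{itemize}
Combining the two sub-cases: when $p<\sqrt{N/2}$, the radial integral is bounded below by a positive constant independent of $R$, and we obtain $\|\psi_{\varepsilon,\rho}\|_p^p\ge C_2\,\varepsilon^{2p}$. When $p=\sqrt{N/2}$, the exponent hits the critical value $-1$ and the integral grows like $\log R=\log(\rho/(4\varepsilon))\ge c\,|\log(\rho^{-1}\varepsilon)|$, yielding $\|\psi_{\varepsilon,\rho}\|_p^p\ge C_1\,\varepsilon^{2p}|\log(\rho^{-1}\varepsilon)|$.

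The only really delicate point is this logarithmic borderline: one must check that $p=\sqrt{N/2}$ genuinely falls in the regime $p>A(N)$ governed by the decay $|y|^{-(N-2p)/(p-1)}$ (which it does whenever $N\ge 6$, as remarked after \eqref{defdeA(N)eB(N)}), and that the lower asymptotic constant $a_1>0$ in \eqref{asymptphi} enters with the correct sign. All remaining ingredients are routine: a change of variables, a polar-coordinate integration, and the bound on the $L^{p_2^*}$ denominator already proved.
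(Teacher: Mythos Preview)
Your proof is correct and follows essentially the same approach as the paper: reduce to $\psi_{\varepsilon,\rho}$ via the $L^{p_2^*}$ bound \eqref{estimatep2*}, restrict to $B_{\rho/4}$ where the cut-off is $1$, change variables to obtain $\varepsilon^{2p}\int_{B_R}\phi^p$, and then read off the two cases from the asymptotics \eqref{asymptphi}. One minor remark: your annulus discussion is written only for $p>N/(N-2)$, but this does not matter since for every $1<p<\sqrt{N/2}$ the first piece $\int_{B_1}\phi^p>0$ alone already gives the required lower bound; the annulus analysis is genuinely needed only at the borderline $p=\sqrt{N/2}$ to produce the logarithm.
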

	\begin{proof}
		The required bounds follow directly from the asymptotic profile of
		\(\phi\); in fact they are simpler than those in the previous lemma.
		We only need a lower estimate for the \(L^{p}\)-norm of
		\(\psi_{\varepsilon,\rho}\).  The upper estimate for the
		\(L^{p_{2}^{*}}\)-norm is already available in
		\eqref{estimatep2*}, and combining the two gives the desired result. 
		
		\medskip
		
		Since $\rho/4\varepsilon>1$, we have 
		\begin{align*}
			\int_{\mathbb R^N}|\psi_{\varepsilon,\rho}|^p\,dx\geq&\,C\varepsilon^{-(N-2p)}\int_{B_{\frac{\rho}{4}}(0)}\left(\phi\left(\frac{|x|}{\varepsilon}\right)\right)^pdx\\
			\geq&\,C\varepsilon^{2p}\left[\int_{B_{\frac{\rho}{4\varepsilon}}(0)\backslash B_1(0)}\phi(|x|)^pdx+|B_1(0)|\right]\\
			=&\,C\varepsilon^{2p}+C\varepsilon^{2p}\int_1^{\rho/4\varepsilon}\phi^p(r)r^{N-1}dr.
		\end{align*}
		From this point, we only need to separate the cases for $p$, considering the limits given in \eqref{asymptphi}, noticing that the case $N=2p^2$ gives the borderline $\phi^p(r)\geq Cr^{-N}$ for all $r\geq1$. All other cases give bounds for $\phi$ such that the last integral is positive and finite and can be absorbed into the generic constant \(C\). We omit the details.
	\end{proof}
	
	\subsection{Construction of $w_\rho$ and proof of the main theorems}
	
	Now, for any sufficiently small $\varepsilon$ and $\rho$ such that \eqref{choiceofdeltaandrho}, $4\varepsilon<\rho$ and $|\log(\rho^{-1}\varepsilon)|\ge 1$ hold, define
	\[
	w_\rho=\pi_{\mathcal M}(w_{\varepsilon,\rho}),
	\]
	where $w_{\varepsilon,\rho}$ is defined in \eqref{defofpsiandw} and satisfies the estimates in lemmas \ref{estimatesfortalentifunctions} and \ref{lemmaLpnormestimate}. We recall the definition of $A(\rho)$ given in \eqref{defofAdelta} and the  required equivalent estimates in \eqref{supinicial} and \eqref{supwithoutpiM}.  Notice that by the definition of $A(\rho)$, \eqref{supwithoutpiM} is also equivalent to
	\begin{equation}\label{supwithoutpiMfinal}
		\sup_{v\in C(\rho);\,t \ge 0;\,\tau\ge 0} E_0(tv+\tau w_{\varepsilon,\rho}) < \frac{2}{N} S_{2,p}^{N/2p}.
	\end{equation}
	As already mentioned, the definition of $C(\rho)$ in \eqref{C_ro} ensures that $w_\rho$ has disjoint support from any of its functions. Thus, the estimate \eqref{supwithoutpiMfinal} can be split. So, it remains to prove that   
		\begin{equation}\label{supwithoutpiMfinal2}
		\sup_{v\in C(\rho);\,t \ge 0} E_0(tv)+\sup_{\,\tau\ge 0} E_0(\tau w_{\varepsilon,\rho}) < \frac{2}{N} S_{2,p}^{N/2p}.
	\end{equation}
	Lemma \ref{lemmasupofcdelta} already gives what we need for the first supremum. We are ready to prove  the estimates concerning $w_{\varepsilon,\rho}$ in the next lemma.
    
	\begin{lemma}\label{lemmaestimatewepsilonrhofinal}
Suppose that $\varepsilon$ and $\rho$ are such that   \eqref{choiceofdeltaandrho}, $4\varepsilon<\rho$ and $|\log(\rho^{-1}\varepsilon)|\ge 1$ hold. We have
\[
\sup_{\tau\ge 0} E_{0}\!\bigl(\tau\,w_{\varepsilon,\rho}\bigr)\;\le\frac{2}{N}\;
		\begin{cases}
        \displaystyle\,\Bigl[S_{2,p} + C_1\left(\rho^{-1}\varepsilon\right)^{2p}-C_2 \lambda\varepsilon^{\,2p}|\log (\rho^{-1}\varepsilon)|\Bigr]^{N/2p}\,
			& \text{if}\quad p=\sqrt{\frac{N}{2}} , \\[1.5ex]
			\displaystyle\,
\Bigl[S_{2,p} + C_1\left(\rho^{-1}\varepsilon\right)^{\frac{N - 2p}{p - 1}}-C_3\lambda \varepsilon^{\,2p}\Bigr]^{N/2p}\,
			& \text{if}\quad\frac{2N - 2}{N}<p<\sqrt{\frac{N}{2}} , \\[1.5ex]
			\displaystyle\,
\Bigl[S_{2,p} + C_4|\log( \rho^{-1}\varepsilon)|^{\,p} \left(\rho^{-1}\varepsilon \right)^{\frac{N - 2p}{p - 1}}-C_3\lambda \varepsilon^{\,2p}\Bigr]^{N/2p},
			& \text{if}\quad p = \frac{2N - 2}{N}, \\[1.5ex]
			\displaystyle\,
\Bigl[S_{2,p} + C_5\left(\rho^{-1}\varepsilon \right)^{N(p - 1)}-C_3\lambda\varepsilon^{\,2p}\Bigr]^{N/2p},
			& \text{if}\quad 1<p < \frac{2N - 2}{N}.
		\end{cases}
    \]
\end{lemma}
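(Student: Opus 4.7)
My plan is to exploit the normalization $\int_{\mathbb{R}^N} w_{\varepsilon,\rho}^{\,p_2^{*}}\,dx = 1$, which makes the supremum a one-variable optimization with an explicit formula, and then simply plug in the bounds from Lemmas~\ref{estimatesfortalentifunctions} and~\ref{lemmaLpnormestimate}.

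First, since $w_{\varepsilon,\rho}$ has support contained in $B_{\rho/2}(0) \subset \Omega$ and $\int_{\Omega} w_{\varepsilon,\rho}^{\,p_2^{*}}\,dx = 1$, I write
\[
E_{0}(\tau\,w_{\varepsilon,\rho})
=\frac{\tau^{p}}{p}\Bigl(\|\Delta w_{\varepsilon,\rho}\|_{p}^{p}-\lambda\|w_{\varepsilon,\rho}\|_{p}^{p}\Bigr)
-\frac{\tau^{p_2^{*}}}{p_2^{*}},
\qquad \tau\ge 0.
\]
Setting $a_{\varepsilon,\rho}:=\|\Delta w_{\varepsilon,\rho}\|_{p}^{p}-\lambda\|w_{\varepsilon,\rho}\|_{p}^{p}$, the function $\tau\mapsto \tfrac{a_{\varepsilon,\rho}}{p}\tau^{p}-\tfrac{1}{p_{2}^{*}}\tau^{p_{2}^{*}}$ is maximized at $\tau_{*}=a_{\varepsilon,\rho}^{1/(p_{2}^{*}-p)}$ (provided $a_{\varepsilon,\rho}>0$, which holds for $\varepsilon$ small since $\|\Delta w_{\varepsilon,\rho}\|_{p}^{p}\to S_{2,p}>0$). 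A direct computation, using $\tfrac{1}{p}-\tfrac{1}{p_{2}^{*}}=\tfrac{2}{N}$ and $\tfrac{p_{2}^{*}}{p_{2}^{*}-p}=\tfrac{N}{2p}$, gives
\[
\sup_{\tau\ge 0} E_{0}(\tau\,w_{\varepsilon,\rho})
=\frac{2}{N}\,a_{\varepsilon,\rho}^{\,N/(2p)}.
\]
If $a_{\varepsilon,\rho}\le 0$ the supremum is $0$ and the desired bound is trivial.

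Next, I insert the upper bound on $\|\Delta w_{\varepsilon,\rho}\|_{p}^{p}$ from Lemma~\ref{estimatesfortalentifunctions} and the lower bound on $\|w_{\varepsilon,\rho}\|_{p}^{p}$ from Lemma~\ref{lemmaLpnormestimate}, selecting the correct case of $p$. In the range $\tfrac{2N-2}{N}<p<\sqrt{N/2}$ both estimates are direct; in the borderline case $p=\tfrac{2N-2}{N}=A(N)$ the additional logarithmic factor $|\log(\rho^{-1}\varepsilon)|^{p}$ from Lemma~\ref{estimatesfortalentifunctions} is kept; for $1<p<A(N)$ the Laplacian estimate gives $(\rho^{-1}\varepsilon)^{N(p-1)}$, while the $L^{p}$ lower bound is $C_{2}\varepsilon^{\,2p}$; and when $p=\sqrt{N/2}$ one uses $N=2p^{2}$ to identify $(N-2p)/(p-1)=2p$, so the Laplacian correction becomes $C_{1}(\rho^{-1}\varepsilon)^{2p}$, while the $L^{p}$ bound acquires the extra $|\log(\rho^{-1}\varepsilon)|$ factor. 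Raising to the power $N/(2p)$ yields each of the four estimates stated in the lemma.

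There is no genuine obstacle: the only point worth checking carefully is that the arithmetic of exponents matches across the different ranges of $p$, and that one is allowed to treat $a_{\varepsilon,\rho}^{N/(2p)}$ as $[S_{2,p} + (\text{small positive}) - \lambda\cdot(\text{small positive})]^{N/(2p)}$ without losing any factors. This is fine because all corrections are $o(1)$ as $\varepsilon\to 0$, so $a_{\varepsilon,\rho}>0$ stays bounded away from $0$ and the mapping $x\mapsto x^{N/(2p)}$ is smooth near $S_{2,p}$; in particular, the monotonicity of this power function lets the inequalities pass directly from $a_{\varepsilon,\rho}$ to $a_{\varepsilon,\rho}^{N/(2p)}$, completing the proof.
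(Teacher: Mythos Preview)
Your proof is correct and follows essentially the same route as the paper: use the normalization $\|w_{\varepsilon,\rho}\|_{p_2^*}=1$ to reduce $\sup_{\tau\ge0}E_0(\tau w_{\varepsilon,\rho})$ to the explicit expression $\tfrac{2}{N}\bigl[\|\Delta w_{\varepsilon,\rho}\|_p^p-\lambda\|w_{\varepsilon,\rho}\|_p^p\bigr]^{N/2p}$, then insert the bounds from Lemmas~\ref{estimatesfortalentifunctions} and~\ref{lemmaLpnormestimate} case by case, using the exponent identities $(N-2p)/(p-1)=2p$ when $p=\sqrt{N/2}$ and $(N-2p)/(p-1)=N(p-1)$ when $p=(2N-2)/N$. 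Your additional remarks on the positivity of $a_{\varepsilon,\rho}$ and the monotonicity of $x\mapsto x^{N/2p}$ are sound and make explicit a detail the paper leaves implicit.
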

	\begin{proof}
Notice that, since $\|w_{\varepsilon,\rho}\|_{p^{*}_2}=1$ (see \eqref{defofpsiandw}), we have
\[
E_{0}\bigl(\tau w_{\varepsilon,\rho}\bigr)
  =\frac{\tau^{p}}{p}\int_{\Omega}\!\bigl(|\Delta w_{\varepsilon,\rho}|^{p}
        -\lambda\,w_{\varepsilon,\rho}^{\,p}\bigr)\,dx
   \;-\;\frac{\tau^{p^{*}_2}}{p^{*}_2}.
\]
 maximizing the right–hand side with respect to
$\tau\ge 0$ yields
\begin{equation*}
\sup_{\tau\ge 0} E_{0}\bigl(\tau w_{\varepsilon,\rho}\bigr)\;=\;
\frac2N\Bigl[
  \int_{\Omega}\!\bigl(|\Delta w_{\varepsilon,\rho}|^{p}
        -\lambda\,w_{\varepsilon,\rho}^{\,p}\bigr)\,dx
\Bigr]^{N/2p}.
\end{equation*}
Observe that $2p=\frac{N - 2p}{p - 1}$ when $p=\sqrt{\frac N2}$ and $\frac{N - 2p}{p - 1}=N(p-1)$ when $p=\frac{2N-2}{N}$. The conclusion follows from Lemmas \ref{estimatesfortalentifunctions} and \ref{lemmaLpnormestimate}.
\end{proof}

We are now ready to prove Theorems \ref{newhom1}, \ref{newhom2}, \ref{theononhomogeneousnonressonant} and \ref{theoresonant_nonhom}.

\begin{proof}
\noindent\textbf{[Proof of Theorems \ref{newhom1} and \ref{theononhomogeneousnonressonant}]}. All that remains is to verify \eqref{supwithoutpiMfinal2}. This is the case where $\lambda$ is not an eigenvalue, and so we can fix $\rho>0$ sufficiently small such that \eqref{choiceofdeltaandrho} holds and let $\varepsilon$ be such that $4\varepsilon<\rho$ and $|\log(\rho^{-1}\varepsilon)|\ge 1$. Then, due to Lemmas \ref{lemmasupofcdelta}, one can see that \eqref{supwithoutpiMfinal2} follows from Lemma \ref{lemmaestimatewepsilonrhofinal} if we choose $\varepsilon$ sufficiently small such that:
\begin{align*}
C_1\left(\rho^{-1}\varepsilon\right)^{2p}-C_2\lambda \varepsilon^{\,2p}|\log (\rho^{-1}\varepsilon)|<0,&\quad\mbox{if}\quad p=\sqrt{\frac N2};\\
C_1\left(\rho^{-1}\varepsilon\right)^{\frac{N - 2p}{p - 1}}-C_3 \lambda\varepsilon^{\,2p}<0,&\quad\mbox{if}\quad \frac{2N-2}{N}<p<\sqrt{\frac N2};\\C_4|\log( \rho^{-1}\varepsilon)|^{\,p} \left(\rho^{-1}\varepsilon \right)^{\frac{N - 2p}{p - 1}}-C_3 \lambda\varepsilon^{\,2p}<0,&\quad\mbox{if}\quad p=\frac{2N-2}{N};\\ C_5\left(\rho^{-1}\varepsilon \right)^{N(p - 1)}-C_3 \lambda\varepsilon^{\,2p}<0,&\quad\mbox{if}\quad 1<p<\frac{2N-2}{N}.
\end{align*}
For any $\lambda>0$, this is always possible for the first three inequalities, but the last can occur only if $p>N/(N-2)$. In case $p=N/(N-2)$, we have $N(p-1)=2p$, and so, we can choose a sufficiently large $\lambda_0$ so $C_5\left(\rho^{-1}\varepsilon \right)^{N(p - 1)}-C_3 \lambda\varepsilon^{\,2p}<0$ for all $\lambda\ge\lambda_0$. Notice that since $N\geq 6$, we always have $N/(N-2)<(2N-2)/N$. This finishes the proof.
\end{proof}

The proof of Theorems \ref{newhom1} and \ref{theoresonant_nonhom} (the resonant case) requires extra care. Since the estimate for \(E_{0}(t v)\) with \(v\in C(\rho)\) (Lemma \ref{lemmasupofcdelta}) now enters together with the concentration estimate depending on \(\varepsilon\), we must choose \(\rho\) and \(\varepsilon\) in a coordinated way.

\begin{proof}
\noindent\textbf{[Proof of Theorems \ref{newhom2} and \ref{theoresonant_nonhom}]}. As in the proof of the non-resonant cases, all we need to do is to verify that \eqref{supwithoutpiMfinal2} holds. Notice that, by Lemmas \ref{lemmasupofcdelta} and \ref{lemmaestimatewepsilonrhofinal}, we have

\begin{align*}
&\sup_{v\in C(\rho);\,t \ge 0} E_0(tv)+\sup_{\,\tau\ge 0} E_0(\tau w_{\varepsilon,\rho})\leq\\& \le c\,\rho^{\,N(N-2p)/2p}+
\frac{2}{N}\;
		\begin{cases}
        \displaystyle\,\Bigl[S_{2,p} + C_1\left(\rho^{-1}\varepsilon\right)^{2p}-C_2\lambda \varepsilon^{\,2p}|\log (\rho^{-1}\varepsilon)|\Bigr]^{N/2p}\,
			& \text{if}\quad p=\sqrt{\frac{N}{2}} , \\[1.5ex]
			\displaystyle\,
\Bigl[S_{2,p} + C_1\left(\rho^{-1}\varepsilon\right)^{\frac{N - 2p}{p - 1}}-C_3\lambda\varepsilon^{\,2p}\Bigr]^{N/2p}\,
			& \text{if}\quad\frac{2N - 2}{N}<p<\sqrt{\frac{N}{2}} , \\[1.5ex]
			\displaystyle\,
\Bigl[S_{2,p} + C_4|\log( \rho^{-1}\varepsilon)|^{\,p} \left(\rho^{-1}\varepsilon \right)^{\frac{N - 2p}{p - 1}}-C_3\lambda \varepsilon^{\,2p}\Bigr]^{N/2p},
			& \text{if}\quad p = \frac{2N - 2}{N}, \\[1.5ex]
			\displaystyle\,
\Bigl[S_{2,p} + C_5\left(\rho^{-1}\varepsilon \right)^{N(p - 1)}-C_3\lambda\varepsilon^{\,2p}\Bigr]^{N/2p},
			& \text{if}\quad 1<p < \frac{2N - 2}{N}.
		\end{cases}
\end{align*}
From this point, let us consider $0<\alpha<1$ to be chosen later and let $\rho=\varepsilon^\alpha$. Suppose that $\varepsilon$ is sufficiently small so $\varepsilon^{1-\alpha}<1/4$ and $|\log(\varepsilon^{1-\alpha})|\ge 1$. The right-hand side of this last estimate depends on various locations of $p$, so let us see them case by case. 
\begin{itemize}
    \item Case $p=\sqrt\frac N2$: we have
    \[
    \sup_{v\in C(\rho);\,t \ge 0} E_0(tv)+\sup_{\,\tau\ge 0} E_0(\tau w_{\varepsilon,\rho}) \le c\,\varepsilon^{\,\alpha N(N-2p)/2p}+\frac{2}{N}\Bigl[S_{2,p} + C_1\varepsilon^{2p(1-\alpha)}-C_2 \lambda\varepsilon^{\,2p}|\log (\varepsilon^{1-\alpha})|\Bigr]^{N/2p}
    \]
    Notice that, to make the right–hand side of this estimate smaller than \( \tfrac{2}{N} S_{2,p}^{N/2p} \), we would have to choose \( 0 < \alpha < 1 \) so that \(2p(1-\alpha) > 2p,\) which is impossible. Hence, this case cannot occur and is ruled out.
    \item Case \(\frac{2N-2}{N}\leq p<\sqrt{\frac N2}\): Here, the difference between \(p=\frac{2N-2}{N}\) and \(p>\frac{2N-2}{N}\) is just a term with \(\log\) which does not interfere and can be ignored. Now, to make the right–hand side of this estimate smaller than \( \tfrac{2}{N} S_{2,p}^{N/2p} \) we need to choose $0<\alpha<1$ such that 
    \[
    \alpha N\frac{N-2p}{2p}>2p\quad\mbox{and}\quad(1-\alpha)\frac{N-2p}{p-1}>2p.
    \]
This is possible if and only if
\[
\frac{4p^2}{N(N-2p)}<1\quad\mbox{and}\quad4p^2<N(N-2p^2).
\]
The first inequality holds for all $N\geq 6$ and $p\leq\sqrt{\frac N2}$. But the second holds if and only if $p<\frac{N}{\sqrt{2(N+2)}}$. Thus, this choice of $\alpha$ is only possible if \[ \frac{2N-2}{N}<\frac{N}{\sqrt{2(N+2)}}. \] This inequality is true only when $N\ge 8$. Moreover, notice that we  always have \(\frac{N}{\sqrt{2(N+2)}}<\sqrt{\frac N2}\).
\item Case \(1<p<\frac{2N-2}{N}\): Now, we have
 \[
    \sup_{v\in C(\rho);\,t \ge 0} E_0(tv)+\sup_{\,\tau\ge 0} E_0(\tau w_{\varepsilon,\rho}) \le c\,\varepsilon^{\,\alpha N(N-2p)/2p}+\frac{2}{N}\Bigl[S_{2,p} + C_1\varepsilon^{N(p-1)(1-\alpha)}-C_2 \lambda\varepsilon^{\,2p}\Bigr]^{N/2p}.
    \]
    Therefore, we need to choose $0<\alpha<1$ such that
     \[
    \alpha N\frac{N-2p}{2p}>2p\quad\mbox{and}\quad(1-\alpha)N(p-1)>2p.
    \]
    The second inequality already gives us the restriction $p>N/(N-2)$, as in the proof of the non-resonant case.  This is not enough, though, as we see that the choice of $\alpha$ is possible if and only if
    \[
\frac{4p^2}{N(N-2p)}<1\quad\mbox{and}\quad\frac{4p^2}{N(N-2p)}<\frac{(N-2)p-N}{N(p-1)}.
\]

The first restriction holds automatically whenever \(N\ge 6\) and \(p\le\sqrt{N/2}\), as in the previous case; the second is equivalent to
\(G_{N}(p):=4p^{3}+(2N-8)p^{2}-N^{2}p+N^{2}<0\).

Evaluating \(G_{N}\) at the endpoints of the admissible interval one finds
\[
G_{N}\!\left(\frac{N}{N-2}\right)=\dfrac{8N^{2}}{(N-2)^{3}}>0
\]
for every \(N>2\), while
\[
G_{N}\left(\frac{2N-2}{N}\right)=\dfrac{4(N-1)^{3}}{N^{3}}\bigl(N^{2}-8N+8\bigr)
\]
is positive for \(N=6,7\) and negative for all \(N\ge 8\).
  For \(N=6\) and \(N=7\) both endpoint values are positive, and one can show that \(G_{N}(p)\) remains positive throughout \(\bigl(\tfrac{N}{N-2},\tfrac{2N-2}{N}\bigr)\).  For every \(N\ge 8\), however, \(G_{N}\) starts positive at \(p=\tfrac{N}{N-2}\) and ends negative at \(p=\tfrac{2N-2}{N}\). Since \(G_{N}''(p)=24p+4(2N-8)>0\) for every \(p>0\), the polynomial is strictly convex, hence can cross the axis at most once. This guarantees a single root \(p_{0}(N)\) inside the interval, with \(G_{N}(p)<0\) precisely for
\(p_{0}(N)<p<\tfrac{2N-2}{N}\). Thus condition \(G_{N}(p)<0\) holds exactly on that sub-interval when \(N\ge 8\) and on no admissible \(p\) when \(N=6,7\).
\end{itemize}

Putting all the cases together, we finish the proof.
\end{proof}

\end{document}